\documentclass[11pt,a4paper]{article}
\usepackage{amsmath, amssymb, amsthm}
\usepackage{enumitem}
\usepackage{geometry}
\usepackage{boondox-cal}
\usepackage{graphicx}
\usepackage{url}
\usepackage{tikz-cd}

\newtheorem{theorem}{Theorem}[section]
\newtheorem{proposition}[theorem]{Proposition}
\newtheorem{lemma}[theorem]{Lemma}
\newtheorem{corollary}[theorem]{Corollary}
\theoremstyle{definition}
\newtheorem{definition}[theorem]{Definition}
\newtheorem{example}[theorem]{Example}
\newtheorem{remark}[theorem]{Remark}

\newcommand\blfootnote[1]{%
  \begingroup
  \renewcommand\thefootnote{}\footnote{#1}%
  \addtocounter{footnote}{-1}%
  \endgroup
}

\title{Gray $\mathcal{A}$-metrics on nilpotent Lie groups and the quasicentroid condition}
{\author{
  \begin{tabular}{@{}c@{}}
\textsc{Fern\'andez-Culma}, Edison Alberto \textsuperscript{$\dagger$}\blfootnote{${}^{\dagger}$ Partially supported by CONICET and SECyT-UNC: 33620180100523CB} \\
    {\small Centro de Investigaci\'on y Estudios en Matem\'atica de C\'ordoba, CONICET}\\
    {\small Facultad de Matem\'atica, Astronom\'ia, F\'isica y Computaci\'on, UNC}\\
    {\small C\'ordoba, Argentina}\\
    \texttt{\small efernandez@famaf.unc.edu.ar}\\
  \end{tabular}
}}

\begin{document}

\maketitle

\blfootnote{2020 \textit{Mathematics Subject Classification: Primary 53C25;  Secondary 17B30, 22E25} {}. \textit{Key words and phrases: Nilpotent Lie algebra, Left-invariant metric, Ricci operator, Gray $\mathcal{A}$-metric, Quasicentroid} }

%%%%%%%%%%%%%%%%%%%%%%%%%%%%%%%%%%%%%%%%%%%%%%%%%%%%%%%%%%%%%%%%%%%
% ABSTRACT
%%%%%%%%%%%%%%%%%%%%%%%%%%%%%%%%%%%%%%%%%%%%%%%%%%%%%%%%%%%%%%%%%%%

\begin{abstract}
We study left-invariant Gray $\mathcal{A}$-metrics (cyclic-parallel Ricci tensor) on nilpotent Lie groups, focusing on the algebraic condition that the Ricci operator belongs to the quasicentroid $\operatorname{QC}(\mathfrak{n})$ of the Lie algebra $\mathfrak{n}$. We review and develop the abstract structure theory of quasicentroids, and prove a Central Decomposition Theorem: any nilpotent Riemannian Lie algebra with $\operatorname{Ric} \in \operatorname{QC}(\mathfrak{n})$ decomposes as a central product of ideals $\mathfrak{i}_j = E_{\lambda_j} \oplus \mathfrak{z}$, where $\mathfrak{z}$ is the center of $\mathfrak{n}$ and $E_{\lambda_j}$ are the negative Ricci eigenspaces. Endowed with the restricted metric, each factor $\mathfrak{i}_j$ is itself a Gray $\mathcal{A}$-space. Finally, we investigate when the general $\mathcal{A}$-metric condition reduces to $\operatorname{Ric} \in \operatorname{QC}(\mathfrak{n})$. We prove this equivalence unconditionally for two-step nilpotent algebras and whenever the metric admits a nice basis, and provide a general characterization via the vanishing of two trace identities.
\end{abstract}

%%%%%%%%%%%%%%%%%%%%%%%%%%%%%%%%%%%%%%%%%%%%%%%%%%%%%%%%%%%%%%%%%%%
% INTRODUCTION
%%%%%%%%%%%%%%%%%%%%%%%%%%%%%%%%%%%%%%%%%%%%%%%%%%%%%%%%%%%%%%%%%%%

\section{Introduction}

In his seminal work on the classification of Einstein-like Riemannian manifolds, Alfred Gray \cite{Gray1978} introduced two fundamental classes of metrics, denoted by $\mathcal{A}$ and $\mathcal{B}$, which naturally interpolate between the class $\mathcal{E}$ of Einstein manifolds, the class $\mathcal{P}$ of manifolds with parallel Ricci tensor ($\nabla \operatorname{ric} = 0$), and the class $\mathcal{C}$ of manifolds with constant scalar curvature. The class $\mathcal{B}$ is characterized by metrics having a \textit{Codazzi Ricci tensor} (equivalently, those with harmonic curvature). On the other hand, a Riemannian manifold $(M,g)$ is a Gray $\mathcal{A}$-manifold if its Ricci tensor is cyclic-parallel; that is,
\begin{equation*}
    (\nabla_X \operatorname{ric})(Y,Z) + (\nabla_Y \operatorname{ric})(Z,X) + (\nabla_Z \operatorname{ric})(X,Y) = 0,
\end{equation*}
or equivalently, $(\nabla_X \operatorname{ric})(X,X) = 0$ for all tangent vector fields $X, Y, Z \in \mathfrak{X}(M)$.

From the standpoint of representation theory, the space of tensors sharing the algebraic symmetries of $\nabla \operatorname{ric}$ decomposes under the action of the orthogonal group into three irreducible components. Two of these components correspond exactly to the geometries defined by classes $\mathcal{A}$ and $\mathcal{B}$. This algebraic decomposition yields the strict inclusion hierarchy \cite{Gray1978}:
\begin{equation*}
\begin{tikzcd}[column sep=1.5em, row sep=0.5em]
& & \mathcal{A} \arrow[dr, phantom, sloped, "\subset"] & & & \\
\mathcal{E} \arrow[r, phantom, "\subset"] & \mathcal{P} = \mathcal{A} \cap \mathcal{B} \arrow[ur, phantom, sloped, "\subset"] \arrow[dr, phantom, sloped, "\subset"] & & \mathcal{A} \cup \mathcal{B} \arrow[r, phantom, "\subset"] & \mathcal{C} \arrow[r, phantom, "\subset"] & \mathcal{R} \\
& & \mathcal{B} \arrow[ur, phantom, sloped, "\subset"] & & &
\end{tikzcd}
\end{equation*}
where $\mathcal{R}$ denotes the class of all Riemannian manifolds.

Beyond its representation-theoretic origin, the class $\mathcal{A}$ plays a central role in the geometry of geodesic reflections. Motivated by Cartan's symmetric spaces, Joseph D'Atri and Helen Nickerson \cite{DatriNickerson1969, DatriNickerson1974} initiated the study of Riemannian manifolds whose local geodesic symmetries preserve the volume element up to sign, now known as \emph{D'Atri spaces} (see the survey \cite{KowalskiPruferVanhecke1996}). Being a D'Atri space is equivalent to satisfying an infinite sequence of curvature identities known as \textit{the odd Ledger conditions} $L_{2k+1}$ ($k \ge 1$). Crucially, the first non-trivial odd Ledger condition, $L_3 = 0$, coincides precisely with Gray's $\mathcal{A}$-condition $(\nabla_X \operatorname{ric})(X,X) = 0$. Consequently, every D'Atri space (in particular, every naturally reductive homogeneous space \cite{DatriNickerson1974}) is necessarily a Gray $\mathcal{A}$-manifold.

Because imposing the full infinite sequence of Ledger conditions directly is algebraically intractable, the standard strategy for classifying D'Atri spaces in low dimensions consists in first classifying all metrics in Gray's class $\mathcal{A}$ ($L_3 = 0$) and subsequently filtering the resulting families through higher-order Ledger obstructions. As detailed by Teresa Arias-Marco \cite{AriasMarco2007}, this classification program is notoriously delicate. For instance, initial attempts to classify four-dimensional homogeneous $\mathcal{A}$-manifolds and D'Atri spaces proved to be incomplete, omitting certain non-symmetric irreducible families, and ultimately required a definitive correction \cite{AriasMarco2007}.

By a classical theorem of John Milnor \cite[Theorem 2.4]{Milnor1976}, a non-abelian nilpotent Lie group never admits left-invariant Einstein metrics. Consequently, such groups cannot admit metrics with a parallel Ricci tensor ($\nabla \operatorname{ric} = 0$), as the parallel eigenspaces would force the Lie algebra to decompose as an orthogonal direct sum of Einstein ideals. Also, it is proved in \cite{DAtri1985} that any left-invariant metric with harmonic curvature on a nilpotent Lie group is necessarily Ricci-parallel. Thus, the Gray class $\mathcal{B}$ is empty in the non-abelian nilpotent setting. In stark contrast, nilpotent Lie groups support a rich variety of genuine left-invariant $\mathcal{A}$-metrics:

Aroldo Kaplan \cite{Kaplan1983} proved that all groups of Heisenberg type are D'Atri spaces, and hence $\mathcal{A}$-manifolds, even when they fail to be naturally reductive, while Carolyn Gordon \cite{Gordon1985} showed that any naturally reductive nilmanifold is at most two-step nilpotent. Very recently, systematic investigations of left-invariant $\mathcal{A}$-metrics on nilpotent Lie groups have been undertaken by Teresa Arias-Marco and Jos\'e Manuel Fern\'andez-Barroso \cite{AriasMarcoFB2026} and by Huibin Chen, Zhiqi Chen, Hui Zhang and Fuhai Zhu \cite{ChenChenZhu2026, ChenChenZhangZhu2025}, who studied the two-step case, under the name of \textit{generalized Heisenberg-type algebras}, and classified left-invariant $\mathcal{A}$-metrics on indecomposable nilpotent Lie groups up to dimension five.

The primary goal of the present article is to study a distinguished algebraic family of Gray $\mathcal{A}$-metrics on nilpotent Riemannian Lie algebras $(\mathfrak{n}, \langle \cdot, \cdot \rangle)$, defined by strengthening the cubic condition \eqref{grayA_condition} to the requirement that its Ricci operator satisfies
\begin{equation}\label{eq:intro_qc}
    [\operatorname{Ric} X, X] = 0 \quad \text{for all } X \in \mathfrak{n}.
\end{equation}
By polarization, \eqref{eq:intro_qc} is equivalent to $[\operatorname{Ric} X, Y] = [X, \operatorname{Ric} Y]$ for all $X, Y \in \mathfrak{n}$, which means precisely that the Ricci operator belongs to the \emph{quasicentroid} $\operatorname{QC}(\mathfrak{n})$ of the Lie algebra. Far from being an artificial algebraic restriction, the condition $\operatorname{Ric} \in \operatorname{QC}(\mathfrak{n})$ captures the exact operator-theoretic mechanism underlying known constructions of nilpotent $\mathcal{A}$-metrics, while providing a natural framework to unlock powerful structural decompositions.

To lay the algebraic foundations of our approach, after reviewing preliminary definitions in Section 2, we devote Section 3 to a self-contained study of the quasicentroid $\operatorname{QC}(\mathfrak{g})$ of an arbitrary Lie algebra $\mathfrak{g}$ with center $\mathfrak{z}$. In Proposition \ref{prop:qc_diagonalizable_iff}, we prove that a diagonalizable operator $Q \in L(\mathfrak{g}, \mathfrak{g})$ belongs to $\operatorname{QC}(\mathfrak{g})$ if and only if $\mathfrak{g}$ decomposes as a \textit{central product of ideals}, a concept adapted from group theory that acquires an interesting geometric interpretation in our study of $\mathcal{A}$-metrics. Furthermore, we establish a complete characterization of Lie algebras having a minimal quasicentroid $\operatorname{QC}(\mathfrak{g}) = \mathbb{R}\operatorname{Id}_{\mathfrak{g}} \oplus L(\mathfrak{g}, \mathfrak{z})$ in terms of \textit{central indecomposability} and the absence of complex or square-zero structures in $\operatorname{Sym}(\omega)$ (Theorem \ref{thm:qc_minimal_characterization}). Combined with auxiliary structural results, this characterization allows us to prove that every filiform Lie algebra has a minimal quasicentroid.

In Section 4, we apply this algebraic machinery to investigate the structural implications of the condition $\operatorname{Ric} \in \operatorname{QC}(\mathfrak{n})$ on a nilpotent Riemannian Lie algebra $(\mathfrak{n}, \langle \cdot, \cdot \rangle)$ with center $\mathfrak{z}$ and no abelian factors. We show that $\operatorname{Ric}$ is necessarily invertible and its eigenspaces split orthogonally between the center and its complement. Specifically, letting $E_{\lambda}$ denote the eigenspace associated with an eigenvalue $\lambda$, we obtain the decomposition $\mathfrak{z}^\perp = \bigoplus_{\lambda < 0} E_{\lambda}$ and $\mathfrak{z} = \bigoplus_{\lambda > 0} E_{\lambda}$. This leads to our main structural result, the Central Decomposition Theorem (Theorem \ref{thm:ricci_restriction}). For each negative eigenvalue $\lambda_j$ of $\operatorname{Ric}$, the subspace $\mathfrak{i}_j = E_{\lambda_j} \oplus \mathfrak{z}$ endowed with the restricted metric is an ideal of $\mathfrak{n}$ with center $\mathfrak{z}$, allowing $\mathfrak{n}$ to decompose as the central product of $\mathfrak{i}_1, \dots, \mathfrak{i}_r$. Furthermore, the intrinsic Ricci operator of each ideal satisfies $\operatorname{Ric}_{\mathfrak{i}_j} = \lambda_j \operatorname{Id}_{\mathfrak{i}_j} + \varphi_j \in \operatorname{QC}(\mathfrak{i}_j)$, where $\varphi_j$ is a linear map taking values in the center $\mathfrak{z}$. Geometrically, this represents the closest nilpotent analogue to a de Rham decomposition, successfully splitting the space into component factors that each inherently carry a Gray $\mathcal{A}$-metric.

Furthermore, we leverage this structural machinery to address specific classification problems. We prove that if $\dim \mathfrak{z} = 1$ and $\operatorname{Ric} \in \operatorname{QC}(\mathfrak{n})$, then $\mathfrak{n}$ must be two-step nilpotent and, consequently, isomorphic to an odd-dimensional Heisenberg algebra, a remarkable space where every inner product automatically defines a Gray $\mathcal{A}$-metric. To demonstrate that the hypothesis $\dim \mathfrak{z} = 1$ in Theorem \ref{thm:center_one_heisenberg} is sharp, we give metrics satisfying $\operatorname{Ric} \in \operatorname{QC}(\mathfrak{n})$ on the $5$-dimensional free $3$-step nilpotent Lie algebra and on a $7$-dimensional $5$-step nilpotent Lie algebra, both of which feature a two-dimensional center.

Finally, Section 5 addresses the converse question: under what conditions does a Gray $\mathcal{A}$-metric on a nilpotent Lie algebra necessarily satisfy $\operatorname{Ric} \in \operatorname{QC}(\mathfrak{n})$? We show that for any two-step nilpotent Riemannian Lie algebra, the $\mathcal{A}$-metric condition is equivalent to $\operatorname{Ric} \in \operatorname{QC}(\mathfrak{n})$, providing a unified, global reformulation of the restricted commutativity criteria established in \cite{AriasMarcoFB2026, ChenChenZhangZhu2025}. Furthermore, we prove that if a nilpotent Riemannian Lie algebra endowed with a Gray $\mathcal{A}$-metric admits a \emph{nice basis} (in the sense of Nikolayevsky \cite{Nikolayevsky2011}), its Ricci operator automatically satisfies the quasicentroid condition. We conclude the paper in Subsection 5.3 by removing all basis and step-length restrictions. Combining the polarized Gray identity with the vanishing of the Killing form, we prove in Theorem \ref{thm:grayA_trace} that a left-invariant metric on an arbitrary nilpotent Lie algebra satisfies $\operatorname{Ric} \in \operatorname{QC}(\mathfrak{n})$ if and only if it is a Gray $\mathcal{A}$-metric and the two symmetric bilinear forms
\begin{equation*}
    \theta_1(X,Y) = \operatorname{tr}(\operatorname{ad}_X  \operatorname{ad}_Y  \operatorname{Ric}) \quad \text{and} \quad \theta_2(X,Y) = \operatorname{tr}(\operatorname{ad}_X  \operatorname{Ric}  \operatorname{ad}_Y  \operatorname{Ric})
\end{equation*}
vanish identically.

\section{Preliminaries}

\begin{definition}
A Riemannian manifold $(M, g)$ is called an $\mathcal{A}$-\textit{manifold} (or is said to admit an $\mathcal{A}$-\textit{metric}) if its Ricci tensor $\operatorname{ric}$ is cyclic parallel, that is,
\begin{equation}\label{eq1}
(\nabla_{X}\operatorname{ric})(Y,Z) + (\nabla_{Y}\operatorname{ric})(Z,X) + (\nabla_{Z}\operatorname{ric})(X,Y) = 0
\end{equation}
for all vector fields $X,Y,Z \in \mathfrak{X}(M)$. Since the covariant derivative $\nabla \operatorname{ric}$ is symmetric in its last two arguments, standard polarization arguments show that condition \eqref{eq1} is equivalent to
\begin{equation}\label{eq2}
(\nabla_{X}\operatorname{ric})(X,X) = 0 \quad \text{for all } X \in \mathfrak{X}(M).
\end{equation}
In terms of the Ricci operator $\operatorname{Ric}$, defined by $g(\operatorname{Ric} X, Y) = \operatorname{ric}(X,Y)$, this can be equivalently written as
\begin{equation}\label{eq3}
g( (\nabla_{X}\operatorname{Ric}) X , X) = 0 \quad \text{for all } X \in \mathfrak{X}(M),
\end{equation}
where $(\nabla_{X}\operatorname{Ric})Y = \nabla_{X}(\operatorname{Ric}Y) - \operatorname{Ric}(\nabla_{X} Y)$.
\end{definition}

Let $G$ be a Lie group and $\mathfrak{g}$ its Lie algebra, which is the space of left-invariant vector fields on $G$. A \textit{left-invariant Riemannian metric} $g$ on $G$ is uniquely determined by its evaluation at the identity, which induces an inner product $\langle \cdot , \cdot \rangle$ on $\mathfrak{g}$ given by
\begin{equation*}
\langle X , Y \rangle = g(X,Y) \quad \text{for all } X,Y \in \mathfrak{g}.
\end{equation*}
The pair $(\mathfrak{g}, \langle \cdot , \cdot \rangle)$ is referred to as a \textit{Riemannian Lie algebra}. Conversely, any inner product on $\mathfrak{g}$ uniquely defines a left-invariant Riemannian metric on $G$.

For left-invariant vector fields $X, Y, Z \in \mathfrak{g}$, the Levi-Civita connection $\nabla$ of $(G,g)$ is determined by the Koszul formula:
\begin{equation}\label{koszul}
\langle \nabla_{X}Y , Z \rangle = \frac{1}{2}\big( \langle[X,Y],Z\rangle - \langle [Y,Z],X \rangle + \langle[Z,X],Y\rangle \big).
\end{equation}

We are interested in studying left-invariant $\mathcal{A}$-metrics on $G$. Since left-invariant vector fields globally frame the tangent bundle $TG$, and the function $g(X,Y)$ is constant on $G$ for all $X, Y \in \mathfrak{g}$, the geometric condition \eqref{eq3} translates into a purely algebraic property on the Riemannian Lie algebra. Indeed, as noted in \cite[Theorem 2.1]{ChenChenZhu2026}, a direct application of \eqref{koszul} shows that $g$ is an $\mathcal{A}$-metric if and only if
\begin{equation}\label{grayA_condition}
\langle [\operatorname{Ric} X , X] , X \rangle = 0 \quad \text{for all } X \in \mathfrak{g}.
\end{equation}

Condition \eqref{grayA_condition} naturally motivates the study of the \textit{stronger} algebraic condition:
\begin{equation}\label{eq5}
[\operatorname{Ric} X , X] = 0 \quad \text{for all } X \in \mathfrak{g}.
\end{equation}
It is clear that any left-invariant metric satisfying \eqref{eq5} is automatically an $\mathcal{A}$-metric. Although \textit{a priori} the geometric orthogonality condition \eqref{grayA_condition} does not necessarily force the purely algebraic identity \eqref{eq5}, assuming the latter defines a highly distinguished and algebraically rigid subclass of $\mathcal{A}$-metrics.

Unlike \eqref{eq3}, condition \eqref{eq5} cannot be meaningfully formulated for arbitrary Riemannian manifolds, as it relies on the Lie bracket rather than a standard geometric tensor. However, in the context of left-invariant metrics, this apparent lack of tensorial generality becomes a profound structural advantage: it reduces the problem entirely to the representation theory of the Lie algebra. By standard polarization arguments, equation \eqref{eq5} is equivalent to
\begin{equation}\label{eq6}
[\operatorname{Ric} X , Y] = [X , \operatorname{Ric}Y] \quad \text{for all } X,Y \in \mathfrak{g};
\end{equation}
which precisely means that $\operatorname{Ric}$ belongs to the \textit{quasicentroid} of the Lie algebra $\mathfrak{g}$. This motivates a brief algebraic discussion on this space of \textit{generalized derivations}, setting the stage to study the profound structural consequences of imposing that the Ricci operator lies in the quasicentroid.

\section{Interlude: Quasicentroids of Lie algebras}
This section reviews the notion of the quasicentroid of a Lie algebra and develops its core structural properties. The methods employed here are elementary, relying primarily on basic linear algebra techniques and adapting ideas provided in \cite{LegerLuks, FernandezCulma1, FernandezCulma2}. For the purposes of this algebraic section, and motivated by our subsequent geometric applications, all Lie algebras are assumed to be finite-dimensional over $\mathbb{R}$.

\begin{definition}
Let $\mathfrak{g}$ be a Lie algebra. A linear transformation $Q \in L(\mathfrak{g}, \mathfrak{g})$ is said to belong to the \textit{quasicentroid} of $\mathfrak{g}$ if it satisfies
\begin{equation}\label{eq:QC_def}
[Q X , Y] = [X , QY] \quad \text{for all } X,Y \in \mathfrak{g}.
\end{equation}
The vector space consisting of all such linear transformations is called the \textit{quasicentroid} of $\mathfrak{g}$ and is denoted by $\operatorname{QC}(\mathfrak{g})$.
\end{definition}

The space $\operatorname{QC}(\mathfrak{g})$ plays a distinguished role in the study of \textit{generalized derivations} of Lie algebras and was extensively investigated by Leger and Luks \cite[Section 5]{LegerLuks}, particularly in the context of Lie algebras with trivial center. In general, $\operatorname{QC}(\mathfrak{g})$ is not a Lie subalgebra of $\mathfrak{gl}(\mathfrak{g})$ under the standard commutator. Indeed, the commutator $D$ of any two elements in the quasicentroid is a \textit{$(0,1,1)$-derivation} of $\mathfrak{g}$, meaning it satisfies $[DX, Y] + [X, DY] = 0$ for all $X, Y \in \mathfrak{g}$. However, $\operatorname{QC}(\mathfrak{g})$ is closed under the \textit{standard symmetric product} of linear transformations, thereby forming a Jordan algebra (see \cite[Subsection 5.1]{LegerLuks}). Furthermore, if $\mathfrak{g}$ is an absolutely simple real Lie algebra, that is, its complexification $\mathfrak{g}_{\mathbb{C}}$ is simple, then Schur's Lemma, together with \cite[Theorem 5.28]{LegerLuks}, implies that $\operatorname{QC}(\mathfrak{g})=\mathbb{R}\operatorname{Id}_{\mathfrak{g}}$.

For Lie algebras with non-trivial center $\mathfrak{z}$, the quasicentroid is never reduced to scalar multiples of the identity. Let $L(\mathfrak{g}, \mathfrak{z})$ denote the vector space of all linear transformations from $\mathfrak{g}$ into $\mathfrak{z}$. A straightforward calculation shows that for any scalar $\lambda \in \mathbb{R}$ and any $\varphi \in L(\mathfrak{g}, \mathfrak{z})$, the operator $T = \lambda \operatorname{Id}_{\mathfrak{g}} + \varphi$ satisfies $[TX, Y] = \lambda[X, Y] = [X, TY]$, and hence belongs to $\operatorname{QC}(\mathfrak{g})$. Consequently, if $\mathfrak{g}$ is non-abelian, we always have the inclusion
\begin{equation}\label{eq:minimal_inclusion}
\mathbb{R}\operatorname{Id}_{\mathfrak{g}} \oplus L(\mathfrak{g}, \mathfrak{z}) \subseteq \operatorname{QC}(\mathfrak{g}).
\end{equation}
For convenience, when the equality holds in \eqref{eq:minimal_inclusion}, we shall say that $\mathfrak{g}$ has a \textit{minimal quasicentroid}.

In the classical structure theory of Lie algebras, a standard way to study the decomposability of a Lie algebra $\mathfrak{g}$ is through splittings into a direct sum of proper ideals $\mathfrak{g} = \mathfrak{i}_1 \oplus \mathfrak{i}_2$. For nilpotent Lie algebras, however, direct sum decompositions are overly restrictive because every non-zero ideal intersects the center $\mathfrak{z}$ non-trivially (in particular, any nilpotent Lie algebra with a one-dimensional center is indecomposable as a direct sum of ideals). To overcome this limitation when $\mathfrak{z} \neq \{0\}$, the natural generalization is the notion of a \textit{central product}. Originally rooted in group theory for the study of $p$-groups, this concept is naturally adapted to the context of Lie algebras (see, for instance, \cite[Definition 2.1]{Niroomand2016}).

\begin{definition}\label{def:centrally_decomposable}
Let $\mathfrak{g}$ be a Lie algebra with center $\mathfrak{z}$.
\begin{enumerate}
    \item[(i)] $\mathfrak{g}$ is said to be the \textit{central product} of a finite family of ideals $\mathfrak{i}_1, \dots, \mathfrak{i}_k$ if $\mathfrak{g} = \mathfrak{i}_1 + \dots + \mathfrak{i}_k$ and $[\mathfrak{i}_r, \mathfrak{i}_s] = \{0\}$ for all $r \neq s$.
    \item[(ii)] $\mathfrak{g}$ is said to be \textit{centrally decomposable} if it can be written as a central product of two proper ideals strictly containing the center; that is,
    \begin{equation*}
    \mathfrak{g} = \mathfrak{i}_1 + \mathfrak{i}_2, \quad [\mathfrak{i}_1, \mathfrak{i}_2] = \{0\}, \quad \text{and} \quad \mathfrak{z} \subsetneq \mathfrak{i}_1, \mathfrak{i}_2 \subsetneq \mathfrak{g}.
    \end{equation*}
    Otherwise, $\mathfrak{g}$ is said to be \textit{centrally indecomposable}.
\end{enumerate}
\end{definition}

\begin{remark}
Note that the mutual commutativity $[\mathfrak{i}_r, \mathfrak{i}_s] = \{0\}$ automatically forces the intersection of any ideal with the sum of the others to lie inside the center $\mathfrak{z}$. In particular, when $\mathfrak{i}_j \cap \sum_{r \neq j} \mathfrak{i}_r = \{0\}$ for all $j$, the central product of these ideals reduces to the standard direct sum $\mathfrak{i}_1 \oplus \dots \oplus \mathfrak{i}_k$. The strict inclusions $\mathfrak{z} \subsetneq \mathfrak{i}_k \subsetneq \mathfrak{g}$ in part (ii) rule out trivial decompositions (such as $\mathfrak{g} = \mathfrak{g} + \mathfrak{z}$), ensure that both ideals are non-abelian, and guarantee that their intersection coincides precisely with the center of the algebra ($\mathfrak{i}_1 \cap \mathfrak{i}_2 = \mathfrak{z}$).

A prototypical example of a centrally decomposable nilpotent Lie algebra is the $(2n+1)$-dimensional Heisenberg algebra $\mathfrak{h}_{2n+1} = \operatorname{span}\{x_1, y_1, \dots, x_n, y_n, z\}$, with non-zero brackets $[x_k, y_k] = z$, for $n \geq 2$. Although $\mathfrak{h}_{2n+1}$ is indecomposable as a direct sum of ideals because its center $\mathbb{R}z$ is one-dimensional, it decomposes as the central product of the ideals $\mathfrak{i}_1 = \operatorname{span}\{x_1, y_1, z\} \cong \mathfrak{h}_3$ and $\mathfrak{i}_2 = \operatorname{span}\{x_2, y_2, \dots, x_n, y_n, z\} \cong \mathfrak{h}_{2n-1}$, which commute pairwise and intersect precisely along $\mathfrak{z} = \mathbb{R}z$. By contrast, the $3$-dimensional Heisenberg algebra $\mathfrak{h}_3$ is centrally indecomposable.
\end{remark}

The assumption that a diagonalizable operator belongs to the quasicentroid can be elegantly translated into a pairwise commutativity condition on its eigenspaces.

\begin{proposition}\label{prop:qc_diagonalizable_iff}
Let $\mathfrak{g}$ be a Lie algebra with center $\mathfrak{z}$, and let $Q \in L(\mathfrak{g}, \mathfrak{g})$ be a diagonalizable operator with distinct real eigenvalues $\lambda_1, \dots, \lambda_s$ and corresponding eigenspaces $E_{\lambda_1}, \dots, E_{\lambda_s}$. For each $1 \le k \le s$, define the subspace $\mathfrak{i}_k = E_{\lambda_k} + \mathfrak{z}$. Then the following statements are equivalent:
\begin{enumerate}
    \item[(i)] $Q \in \operatorname{QC}(\mathfrak{g})$;
    \item[(ii)] $[E_{\lambda_i}, E_{\lambda_j}] = \{0\}$ for all $i \neq j$;
    \item[(iii)] each $\mathfrak{i}_k$ is an ideal of $\mathfrak{g}$, and $\mathfrak{g}$ decomposes as the central product of the ideals $\mathfrak{i}_1, \dots, \mathfrak{i}_s$.
\end{enumerate}
\end{proposition}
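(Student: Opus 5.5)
I will prove the cycle (i)$\Rightarrow$(ii)$\Rightarrow$(iii)$\Rightarrow$(i), using throughout that $Q$ is diagonalizable, so $\mathfrak{g} = E_{\lambda_1} \oplus \dots \oplus E_{\lambda_s}$.

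\textbf{(i)$\Rightarrow$(ii).} Take $X \in E_{\lambda_i}$ and $Y \in E_{\lambda_j}$ with $i \neq j$. The quasicentroid identity \eqref{eq:QC_def} gives
\[
\lambda_i [X,Y] = [QX,Y] = [X,QY] = \lambda_j [X,Y].
\]
Since $\lambda_i \neq \lambda_j$, this forces $[X,Y]=0$.

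\textbf{(ii)$\Rightarrow$(iii).} First, $\sum_k \mathfrak{i}_k \supseteq \sum_k E_{\lambda_k} = \mathfrak{g}$, so the $\mathfrak{i}_k$ span $\mathfrak{g}$. Next, each $\mathfrak{i}_k$ is an ideal. Write an arbitrary $Z \in \mathfrak{g}$ as $Z = \sum_j Z_j$ with $Z_j \in E_{\lambda_j}$. For $X \in E_{\lambda_k}$, condition (ii) gives
\[
[X,Z] = [X,Z_k] \in [E_{\lambda_k},E_{\lambda_k}].
\]
The key point is that $[E_{\lambda_k},E_{\lambda_k}] \subseteq \mathfrak{i}_k$. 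To see this, take $X, X' \in E_{\lambda_k}$ and decompose $[X,X'] = \sum_j W_j$ with $W_j \in E_{\lambda_j}$. It is not immediate that $W_j = 0$ for $j \ne k$, so I use the Jacobi identity. For $U \in E_{\lambda_m}$ with $m \neq k$,
\[
[[X,X'],U] = [X,[X',U]] + [[X,U],X'] = 0.
\]
So $[X,X']$ commutes with every $E_{\lambda_m}$, $m \neq k$. By (ii), each component $W_j$ with $j \neq k$ also commutes with every $E_{\lambda_m}$, $m \neq j$. Then
\[
[W_j, E_{\lambda_j}] = [[X,X'] - \textstyle\sum_{l\neq j} W_l,\ E_{\lambda_j}] = 0,
\]
because $[X,X']$ commutes with $E_{\lambda_j}$ (as $j \neq k$) and each $W_l$ with $l \neq j$ commutes with $E_{\lambda_j}$ by (ii). Hence $W_j$ commutes with all of $\mathfrak{g}$, so $W_j \in \mathfrak{z}$. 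Therefore $[X,X'] \in E_{\lambda_k} + \mathfrak{z} = \mathfrak{i}_k$. Brackets involving $\mathfrak{z}$ vanish, so $\mathfrak{i}_k$ is an ideal. Finally, for $r \neq s$, (ii) together with the centrality of $\mathfrak{z}$ gives $[\mathfrak{i}_r,\mathfrak{i}_s] = [E_{\lambda_r},E_{\lambda_s}] = 0$. This is the central product decomposition.

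\textbf{(iii)$\Rightarrow$(i).} The central product condition gives $[E_{\lambda_i},E_{\lambda_j}] \subseteq [\mathfrak{i}_i,\mathfrak{i}_j] = 0$ for $i \neq j$. By bilinearity it suffices to check \eqref{eq:QC_def} on eigenvectors $X \in E_{\lambda_i}$, $Y \in E_{\lambda_j}$. If $i = j$, both sides equal $\lambda_i[X,Y]$. If $i \neq j$, both sides vanish.

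The only step needing more than bookkeeping is showing $[E_{\lambda_k},E_{\lambda_k}] \subseteq \mathfrak{i}_k$ in (ii)$\Rightarrow$(iii). There, a bracket of two $\lambda_k$-eigenvectors may have components in other eigenspaces, and the Jacobi identity argument above is what shows those components are central.
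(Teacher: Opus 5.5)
Your proof is correct and follows essentially the same route as the paper: the eigenvalue comparison for (i)$\Rightarrow$(ii), the Jacobi-identity argument showing that the components of $[E_{\lambda_k},E_{\lambda_k}]$ outside $E_{\lambda_k}$ are central for (ii)$\Rightarrow$(iii), and bilinear expansion over eigenvectors to recover the quasicentroid identity. The only difference is organizational: you close a single cycle with a direct (iii)$\Rightarrow$(i), whereas the paper proves (i)$\Leftrightarrow$(ii) and (iii)$\Rightarrow$(ii) separately.
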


\begin{proof}
We first prove the equivalence between (i) and (ii). Suppose $Q \in \operatorname{QC}(\mathfrak{g})$. For any $i \neq j$, taking $X \in E_{\lambda_i}$ and $Y \in E_{\lambda_j}$, the quasicentroid condition yields
\begin{equation*}
    \lambda_i [X, Y] = [QX, Y] = [X, QY] = \lambda_j [X, Y].
\end{equation*}
Since $\lambda_i \neq \lambda_j$, we must have $[X, Y] = 0$, proving that $[E_{\lambda_i}, E_{\lambda_j}] = \{0\}$. Conversely, suppose that distinct eigenspaces commute pairwise. Let $X, Y \in \mathfrak{g}$ and decompose them as $X = \sum_{i=1}^s X_i$ and $Y = \sum_{j=1}^s Y_j$, where $X_i, Y_i \in E_{\lambda_i}$. Then
\begin{equation*}
    [QX, Y] = \sum_{i,j=1}^s [\lambda_i X_i, Y_j] = \sum_{i=1}^s \lambda_i [X_i, Y_i],
\end{equation*}
where the cross terms vanish by hypothesis. An identical calculation shows that $[X, QY] = \sum_{i=1}^s \lambda_i [X_i, Y_i]$, which implies $Q \in \operatorname{QC}(\mathfrak{g})$.

Next, we show that (ii) implies (iii). Assume that $[E_{\lambda_i}, E_{\lambda_j}] = \{0\}$ for all $i \neq j$. We first verify that $[E_{\lambda_k}, E_{\lambda_k}] \subseteq E_{\lambda_k} + \mathfrak{z}$ for each $1 \le k \le s$. Let $X, Y \in E_{\lambda_k}$ and write $W = [X, Y] = \sum_{m=1}^s W_m$ with $W_m \in E_{\lambda_m}$. For any index $j \neq k$ and any $Z \in E_{\lambda_j}$, since $X$ and $Y$ commute with $Z$ by (ii), the Jacobi identity gives
\begin{equation*}
    [W, Z] = [[X, Y], Z] = -[[Y, Z], X] - [[Z, X], Y] = 0.
\end{equation*}
On the other hand, expanding $[W, Z]$ and using (ii) again yields $[W, Z] = \sum_{m=1}^s [W_m, Z] = [W_j, Z]$. Thus $[W_j, Z] = 0$ for all $Z \in E_{\lambda_j}$. Since $W_j \in E_{\lambda_j}$ also commutes with $E_{\lambda_m}$ for all $m \neq j$ by (ii), it follows that $W_j$ commutes with all of $\mathfrak{g} = \bigoplus_{m=1}^s E_{\lambda_m}$, so $W_j \in \mathfrak{z}$ for every $j \neq k$. Consequently, $W = W_k + \sum_{j \neq k} W_j \in E_{\lambda_k} + \mathfrak{z} = \mathfrak{i}_k$, which proves that $[E_{\lambda_k}, E_{\lambda_k}] \subseteq \mathfrak{i}_k$.

Now, given any $X = \sum_{j=1}^s X_j \in \mathfrak{g}$ with $X_j \in E_{\lambda_j}$ and any $Y = Y_k + Z_0 \in \mathfrak{i}_k$ with $Y_k \in E_{\lambda_k}$ and $Z_0 \in \mathfrak{z}$, we have
\begin{equation*}
    [X, Y] = \sum_{j=1}^s [X_j, Y_k + Z_0] = [X_k, Y_k] \in [E_{\lambda_k}, E_{\lambda_k}] \subseteq \mathfrak{i}_k,
\end{equation*}
showing that each $\mathfrak{i}_k$ is an ideal of $\mathfrak{g}$. Moreover, $\mathfrak{g} = \sum_{k=1}^s E_{\lambda_k} = \sum_{k=1}^s \mathfrak{i}_k$, and for any $i \neq j$ we have $[\mathfrak{i}_i, \mathfrak{i}_j] = [E_{\lambda_i} + \mathfrak{z}, E_{\lambda_j} + \mathfrak{z}] = [E_{\lambda_i}, E_{\lambda_j}] = \{0\}$. Thus $\mathfrak{g}$ is the central product of the ideals $\mathfrak{i}_1, \dots, \mathfrak{i}_s$.

Finally, to see that (iii) implies (ii), note that if $\mathfrak{g}$ is the central product of $\mathfrak{i}_1, \dots, \mathfrak{i}_s$, then $[\mathfrak{i}_i, \mathfrak{i}_j] = \{0\}$ for all $i \neq j$ by Definition \ref{def:centrally_decomposable} (i). Since $E_{\lambda_i} \subseteq \mathfrak{i}_i$ and $E_{\lambda_j} \subseteq \mathfrak{i}_j$, it follows immediately that $[E_{\lambda_i}, E_{\lambda_j}] = \{0\}$ for all $i \neq j$.
\end{proof}

\begin{theorem}\label{thm:qc_central_decomposition}
Let $\mathfrak{g}$ be a non-abelian Lie algebra with center $\mathfrak{z}$. Then $\mathfrak{g}$ is centrally indecomposable if and only if every diagonalizable operator $Q \in \operatorname{QC}(\mathfrak{g})$ belongs to $\mathbb{R}\operatorname{Id}_{\mathfrak{g}} \oplus L(\mathfrak{g}, \mathfrak{z})$.
\end{theorem}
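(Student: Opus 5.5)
We prove both implications by contraposition, relying on Proposition \ref{prop:qc_diagonalizable_iff}.

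\emph{Decomposable $\Rightarrow$ a bad operator exists.} Assume $\mathfrak{g}$ is centrally decomposable, say $\mathfrak{g} = \mathfrak{i}_1 + \mathfrak{i}_2$ with $[\mathfrak{i}_1,\mathfrak{i}_2]=\{0\}$ and $\mathfrak{z}\subsetneq \mathfrak{i}_1,\mathfrak{i}_2 \subsetneq \mathfrak{g}$. By the preceding Remark, $\mathfrak{i}_1\cap\mathfrak{i}_2 = \mathfrak{z}$.
\begin{itemize}
\item Choose complements $V_1$ of $\mathfrak{z}$ in $\mathfrak{i}_1$ and $V_2$ of $\mathfrak{z}$ in $\mathfrak{i}_2$. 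Since $\mathfrak{i}_1\cap\mathfrak{i}_2=\mathfrak{z}$, we get $\mathfrak{g} = V_1\oplus V_2\oplus \mathfrak{z}$, and both $V_1,V_2$ are nonzero by the strict inclusions.
\item Define $Q$ by $Q|_{V_1}=1$, $Q|_{V_2}=0$, $Q|_{\mathfrak{z}}=0$. It is diagonalizable with eigenspaces $E_1=V_1$ and $E_0=V_2\oplus\mathfrak{z}\subseteq \mathfrak{i}_2$.
\item Then $[E_1,E_0]\subseteq[\mathfrak{i}_1,\mathfrak{i}_2]=0$, so condition (ii) of Proposition \ref{prop:qc_diagonalizable_iff} holds and $Q\in\operatorname{QC}(\mathfrak{g})$.
\end{itemize}
Suppose $Q=\lambda\operatorname{Id}+\varphi$ with $\varphi\in L(\mathfrak{g},\mathfrak{z})$. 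Take $v\in V_2$, $v\neq0$. Then $0 = Qv = \lambda v + \varphi(v)$ forces $\lambda v\in\mathfrak{z}$; since $V_2 \cap \mathfrak{z} = 0$, this gives $\lambda=0$. Now take $u\in V_1$, $u \neq 0$. Then $u=Qu=\varphi(u)\in\mathfrak{z}$, which contradicts $V_1\cap\mathfrak{z}=0$. Hence $Q\notin \mathbb{R}\operatorname{Id}_{\mathfrak{g}}\oplus L(\mathfrak{g},\mathfrak{z})$.

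\emph{A bad operator exists $\Rightarrow$ decomposable.} Let $Q\in\operatorname{QC}(\mathfrak{g})$ be diagonalizable but not of the form $\lambda\operatorname{Id}+\varphi$. Label its eigenvalues $\lambda_1,\dots,\lambda_s$ with eigenspaces $E_{\lambda_k}$. By Proposition \ref{prop:qc_diagonalizable_iff}, the subspaces $\mathfrak{i}_k=E_{\lambda_k}+\mathfrak{z}$ are ideals and $\mathfrak{g}$ is their central product.

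The key claim is that at least two of the $\mathfrak{i}_k$ strictly contain $\mathfrak{z}$. Suppose instead that $E_{\lambda_k}\subseteq\mathfrak{z}$ for every $k\neq 1$. Then $\mathfrak{g}=E_{\lambda_1}+\mathfrak{z}$. Write $Q-\lambda_1\operatorname{Id}$, which vanishes on $E_{\lambda_1}$ and sends $E_{\lambda_k}\subseteq \mathfrak{z}$ into $\mathfrak{z}$ for $k \neq 1$. So $Q-\lambda_1\operatorname{Id}$ has image in $\mathfrak{z}$, i.e. $Q\in\mathbb{R}\operatorname{Id}\oplus L(\mathfrak{g},\mathfrak{z})$, a contradiction. (This argument also covers $s=1$.)

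Now renumber so that $E_{\lambda_1}\not\subseteq\mathfrak{z}$ and $E_{\lambda_2}\not\subseteq\mathfrak{z}$, and set
\[
\mathfrak{j}_1=\mathfrak{i}_1, \qquad \mathfrak{j}_2=\sum_{k\ge2}\mathfrak{i}_k .
\]
Both are ideals, $\mathfrak{g}=\mathfrak{j}_1+\mathfrak{j}_2$, and $[\mathfrak{j}_1,\mathfrak{j}_2]=0$. Each strictly contains $\mathfrak{z}$: $\mathfrak{j}_1$ because $E_{\lambda_1}\not\subseteq\mathfrak{z}$, and $\mathfrak{j}_2$ because $E_{\lambda_2}\not\subseteq\mathfrak{z}$.

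It remains to check properness, which is the main point to verify.
\begin{itemize}
\item If $\mathfrak{j}_1=\mathfrak{g}$, then $\mathfrak{j}_2\subseteq \mathfrak{g} = \mathfrak{j}_1$, so $[\mathfrak{j}_2,\mathfrak{g}] \subseteq [\mathfrak{j}_2,\mathfrak{j}_1] = 0$. This puts $\mathfrak{j}_2\subseteq\mathfrak{z}$, contradicting $\mathfrak{z}\subsetneq \mathfrak{j}_2$.
\item If $\mathfrak{j}_2=\mathfrak{g}$, the symmetric argument gives $\mathfrak{j}_1\subseteq\mathfrak{z}$, again a contradiction.
\end{itemize}
Hence $\mathfrak{g}$ is centrally decomposable.
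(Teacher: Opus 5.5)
Your proof is correct and follows the paper's argument essentially step for step. You use the same two contrapositives via Proposition~\ref{prop:qc_diagonalizable_iff}, the same ideals $\mathfrak{j}_1=\mathfrak{i}_1$ and $\mathfrak{j}_2=\sum_{k\ge 2}\mathfrak{i}_k$, and the same projection operator (your $E_0=V_2\oplus\mathfrak{z}$ is exactly the paper's $\mathfrak{j}_2$). The remaining differences are cosmetic: you fix $\lambda=0$ before reaching the contradiction rather than $\lambda=1$, and you give an explicit commutation argument for $\mathfrak{j}_1,\mathfrak{j}_2\subsetneq\mathfrak{g}$, which the paper only asserts from the eigenspace decomposition.
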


\begin{proof}
We prove the contrapositive of both implications. First, suppose that there exists a diagonalizable operator $Q \in \operatorname{QC}(\mathfrak{g})$ such that $Q \notin \mathbb{R}\operatorname{Id}_{\mathfrak{g}} \oplus L(\mathfrak{g}, \mathfrak{z})$, and let $E_{\lambda_1}, \dots, E_{\lambda_s}$ be the eigenspaces corresponding to the distinct real eigenvalues $\lambda_1, \dots, \lambda_s$ of $Q$. Notice that for any $1 \le m \le s$, the operator $Q - \lambda_m \operatorname{Id}_{\mathfrak{g}}$ acts on $E_{\lambda_k}$ by scalar multiplication by $(\lambda_k - \lambda_m)$, meaning its image is precisely $\bigoplus_{k \neq m} E_{\lambda_k}$. By hypothesis, $Q \notin \mathbb{R}\operatorname{Id}_{\mathfrak{g}} \oplus L(\mathfrak{g}, \mathfrak{z})$, so for any chosen eigenvalue $\lambda_m$, the operator $Q - \lambda_m \operatorname{Id}_{\mathfrak{g}}$ cannot map $\mathfrak{g}$ entirely into $\mathfrak{z}$. Consequently, the sum $\bigoplus_{k \neq m} E_{\lambda_k}$ is never contained in $\mathfrak{z}$. If at most one eigenspace were not contained in $\mathfrak{z}$ (say, $E_{\lambda_1} \not\subseteq \mathfrak{z}$ while $E_{\lambda_k} \subseteq \mathfrak{z}$ for all $k \neq 1$), then setting $m = 1$ would yield $\operatorname{Im}(Q - \lambda_1 \operatorname{Id}_{\mathfrak{g}}) = \bigoplus_{k \neq 1} E_{\lambda_k} \subseteq \mathfrak{z}$, yielding a contradiction. Therefore, at least two distinct eigenspaces, say $E_{\lambda_1}$ and $E_{\lambda_2}$, are not contained in $\mathfrak{z}$ (which, in particular, forces $s \ge 2$). By Proposition \ref{prop:qc_diagonalizable_iff}(iii), each $\mathfrak{i}_k = E_{\lambda_k} + \mathfrak{z}$ is an ideal of $\mathfrak{g}$ and $\mathfrak{g}$ is the central product of $\mathfrak{i}_1, \dots, \mathfrak{i}_s$. Defining the ideals
\begin{equation*}
    \mathfrak{j}_1 = \mathfrak{i}_1 = E_{\lambda_1} + \mathfrak{z} \quad \text{and} \quad \mathfrak{j}_2 = \sum_{k=2}^s \mathfrak{i}_k = \left(\bigoplus_{k=2}^s E_{\lambda_k}\right) + \mathfrak{z},
\end{equation*}
we have $\mathfrak{g} = \mathfrak{j}_1 + \mathfrak{j}_2$ and $[\mathfrak{j}_1, \mathfrak{j}_2] = \{0\}$. Because $E_{\lambda_1}, E_{\lambda_2} \not\subseteq \mathfrak{z}$ and $\mathfrak{g} = \bigoplus_{k=1}^s E_{\lambda_k}$, it follows that $\mathfrak{z} \subsetneq \mathfrak{j}_1, \mathfrak{j}_2 \subsetneq \mathfrak{g}$, so $\mathfrak{g}$ is centrally decomposable.

Conversely, suppose that $\mathfrak{g}$ is centrally decomposable, so that $\mathfrak{g} = \mathfrak{j}_1 + \mathfrak{j}_2$ for two ideals satisfying $[\mathfrak{j}_1, \mathfrak{j}_2] = \{0\}$ and $\mathfrak{z} \subsetneq \mathfrak{j}_1, \mathfrak{j}_2 \subsetneq \mathfrak{g}$. Since $[\mathfrak{j}_1, \mathfrak{j}_2] = \{0\}$, we have $\mathfrak{j}_1 \cap \mathfrak{j}_2 = \mathfrak{z}$. Let $\mathfrak{a}_1$ be a linear complement of $\mathfrak{z}$ in $\mathfrak{j}_1$; then $\mathfrak{a}_1 \neq \{0\}$, $\mathfrak{a}_1 \cap \mathfrak{z} = \{0\}$, and $\mathfrak{g} = \mathfrak{a}_1 \oplus \mathfrak{j}_2$. Define a diagonalizable operator $Q \in L(\mathfrak{g}, \mathfrak{g})$ with eigenspaces $E_1 = \mathfrak{a}_1$ and $E_0 = \mathfrak{j}_2$ by setting $Q|_{\mathfrak{a}_1} = \operatorname{Id}_{\mathfrak{a}_1}$ and $Q|_{\mathfrak{j}_2} = 0$. Since $[E_1, E_0] \subseteq [\mathfrak{j}_1, \mathfrak{j}_2] = \{0\}$, Proposition \ref{prop:qc_diagonalizable_iff}(ii) implies that $Q \in \operatorname{QC}(\mathfrak{g})$. Finally, we have that $Q \notin \mathbb{R}\operatorname{Id}_{\mathfrak{g}} \oplus L(\mathfrak{g}, \mathfrak{z})$. Suppose for a contradiction that $Q = \lambda \operatorname{Id}_{\mathfrak{g}} + \varphi$ for some $\lambda \in \mathbb{R}$ and $\varphi \in L(\mathfrak{g}, \mathfrak{z})$. Taking any non-zero $X \in \mathfrak{a}_1$, we have $X = QX = \lambda X + \varphi(X)$, which implies $(1-\lambda)X \in \mathfrak{z}$. Because $\mathfrak{a}_1 \cap \mathfrak{z} = \{0\}$, this forces $\lambda = 1$. On the other hand, since $\mathfrak{j}_2$ strictly contains $\mathfrak{z}$, we can choose an element $Y \in \mathfrak{j}_2 \setminus \mathfrak{z}$. Evaluating $Q$ on $Y$ with $\lambda = 1$ yields $0 = QY = Y + \varphi(Y)$, forcing $Y = -\varphi(Y) \in \mathfrak{z}$. This directly contradicts our choice of $Y$, allowing us to conclude that $Q \notin \mathbb{R}\operatorname{Id}_{\mathfrak{g}} \oplus L(\mathfrak{g}, \mathfrak{z})$.

\end{proof}

To study the quasicentroid $\operatorname{QC}(\mathfrak{g})$, it is natural to pass to the quotient $\mathfrak{g}/\mathfrak{z}$. Since the center $\mathfrak{z}$ is the \textit{radical} of the Lie bracket, the commutator induces a well-defined, non-degenerate, $[\mathfrak{g}, \mathfrak{g}]$-valued alternating bilinear form
\begin{equation*}
\omega\colon (\mathfrak{g}/\mathfrak{z}) \times (\mathfrak{g}/\mathfrak{z}) \to [\mathfrak{g}, \mathfrak{g}], \qquad \omega(\overline{X}, \overline{Y}) = [X, Y],
\end{equation*}
where non-degeneracy means that $\omega(\overline{X}, \overline{Y}) = 0$ for all $\overline{Y} \in \mathfrak{g}/\mathfrak{z}$ if and only if $\overline{X} = 0$. We denote by $\operatorname{Sym}(\omega)$ the subspace of $\omega$-symmetric operators on $\mathfrak{g}/\mathfrak{z}$:
\begin{equation*}
\operatorname{Sym}(\omega) = \big\{ A \in L(\mathfrak{g}/\mathfrak{z}, \mathfrak{g}/\mathfrak{z}) \;\big|\; \omega(A\overline{X}, \overline{Y}) = \omega(\overline{X}, A\overline{Y}) \text{ for all } \overline{X}, \overline{Y} \in \mathfrak{g}/\mathfrak{z} \big\}.
\end{equation*}

\begin{lemma}\label{qc_preserves_acs}
Let $\mathfrak{g}$ be a Lie algebra and let $Q \in \operatorname{QC}(\mathfrak{g})$. Then $Q$ leaves every term of the ascending central series of $\mathfrak{g}$ invariant. In particular, $Q(\mathfrak{z}) \subseteq \mathfrak{z}$, where $\mathfrak{z}$ is the center of $\mathfrak{g}$.
\end{lemma}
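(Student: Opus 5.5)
We argue by induction on the index of the ascending central series $\{0\} = \mathfrak{z}_0 \subseteq \mathfrak{z}_1 = \mathfrak{z} \subseteq \mathfrak{z}_2 \subseteq \cdots$. Recall that $\mathfrak{z}_{k+1} = \{X \in \mathfrak{g} : [X,\mathfrak{g}] \subseteq \mathfrak{z}_k\}$. The base case $Q(\mathfrak{z}_0) = \{0\} \subseteq \mathfrak{z}_0$ is trivial. We then show that $Q(\mathfrak{z}_k) \subseteq \mathfrak{z}_k$ implies $Q(\mathfrak{z}_{k+1}) \subseteq \mathfrak{z}_{k+1}$.

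For the inductive step, let $X \in \mathfrak{z}_{k+1}$ and $Y \in \mathfrak{g}$ be arbitrary. The defining identity \eqref{eq:QC_def} of the quasicentroid gives
\begin{equation*}
[QX, Y] = [X, QY].
\end{equation*}
Since $X \in \mathfrak{z}_{k+1}$, we have $[X, \mathfrak{g}] \subseteq \mathfrak{z}_k$, and in particular $[X, QY] \in \mathfrak{z}_k$. Hence $[QX, Y] \in \mathfrak{z}_k$ for every $Y \in \mathfrak{g}$, which is precisely the statement that $QX \in \mathfrak{z}_{k+1}$.

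This step does not actually use the inductive hypothesis. The invariance of each $\mathfrak{z}_{k+1}$ follows directly from the definition of $\mathfrak{z}_{k+1}$ relative to $\mathfrak{z}_k$. So the induction is really only a convenient way of running through all the terms. For $k = 0$ the argument reads: if $X \in \mathfrak{z}$, then $[QX, Y] = [X, QY] = 0$ for all $Y$, so $QX \in \mathfrak{z}$. This gives the particular claim $Q(\mathfrak{z}) \subseteq \mathfrak{z}$.

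There is no genuine obstacle here. The only point needing care is to state the recursive definition of the ascending central series correctly, namely $\mathfrak{z}_{k+1}/\mathfrak{z}_k = \mathfrak{z}(\mathfrak{g}/\mathfrak{z}_k)$, in the elementwise form used above, so that membership $QX \in \mathfrak{z}_{k+1}$ is checked against all $Y \in \mathfrak{g}$.
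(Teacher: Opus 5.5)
Your proof is correct and is essentially the paper's argument: for $X \in \mathfrak{z}_{k+1}$ you use $[QX,Y] = [X,QY] \in \mathfrak{z}_k$ to conclude $QX \in \mathfrak{z}_{k+1}$. You also note that the inductive hypothesis is never used; this is accurate and holds equally for the paper's proof, which is likewise phrased as an induction.
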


\begin{proof}
Recall that the ascending central series of $\mathfrak{g}$ is defined inductively by $\mathfrak{z}_0 = \{0\}$ and
\begin{equation*}
\mathfrak{z}_k = \{ Z \in \mathfrak{g} \mid [Z, X] \in \mathfrak{z}_{k-1} \text{ for all } X \in \mathfrak{g} \}.
\end{equation*}
We proceed by induction on $k \ge 0$. The base case $k=0$ is trivial. Assume that $Q(\mathfrak{z}_{k-1}) \subseteq \mathfrak{z}_{k-1}$ for some $k \ge 1$, and let $Z \in \mathfrak{z}_k$. For any $X \in \mathfrak{g}$, the quasicentroid condition yields
\begin{equation*}
[QZ, X] = [Z, QX].
\end{equation*}
Since $Z \in \mathfrak{z}_k$ and $QX \in \mathfrak{g}$, the bracket $[Z, QX]$ belongs to $\mathfrak{z}_{k-1}$ by definition. Thus, $[QZ, X] \in \mathfrak{z}_{k-1}$ for all $X \in \mathfrak{g}$, which implies $QZ \in \mathfrak{z}_k$. Therefore, $Q(\mathfrak{z}_k) \subseteq \mathfrak{z}_k$ for all $k \ge 0$. The invariance of the center $\mathfrak{z} = \mathfrak{z}_1$ corresponds to the case $k=1$.
\end{proof}

\begin{lemma}\label{lemma:qc_reduction_sym}
Let $\mathfrak{g}$ be a non-abelian Lie algebra with center $\mathfrak{z}$, and let $\pi\colon \mathfrak{g} \to \mathfrak{g}/\mathfrak{z}$ be the canonical projection. Then:
\begin{enumerate}
    \item[(i)] The map $\Phi\colon \operatorname{QC}(\mathfrak{g}) \to \operatorname{Sym}(\omega)$ given by $\Phi(Q) = \overline{Q}$, where $\overline{Q}(\overline{X}) = \overline{QX}$, is a surjective linear map with $\ker(\Phi) = L(\mathfrak{g}, \mathfrak{z})$. In particular, $\operatorname{QC}(\mathfrak{g}) / L(\mathfrak{g}, \mathfrak{z})$ and $\operatorname{Sym}(\omega)$ are isomorphic vector spaces. Also, $\operatorname{Sym}(\omega)$ is closed under powers: if $A \in \operatorname{Sym}(\omega)$, then $A^k \in \operatorname{Sym}(\omega)$ for all $k \ge 1$.
   \item[(ii)] $\mathfrak{g}$ is centrally decomposable if and only if the vector space $\mathfrak{g}/\mathfrak{z}$ admits a non-trivial $\omega$-orthogonal direct sum decomposition; i.e., there exist non-zero subspaces $\mathfrak{h}_1, \mathfrak{h}_2 \subseteq \mathfrak{g}/\mathfrak{z}$ such that $\mathfrak{g}/\mathfrak{z} = \mathfrak{h}_1 \oplus \mathfrak{h}_2$ and $\omega(\mathfrak{h}_1, \mathfrak{h}_2) = \{0\}$.
\end{enumerate}
\end{lemma}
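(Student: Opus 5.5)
For part (i), I first check that $\Phi$ is well defined. By Lemma \ref{qc_preserves_acs}, every $Q \in \operatorname{QC}(\mathfrak{g})$ satisfies $Q(\mathfrak{z}) \subseteq \mathfrak{z}$, so $\overline{Q}(\overline{X}) = \overline{QX}$ does not depend on the representative. The quasicentroid identity then reads $\omega(\overline{Q}\,\overline{X},\overline{Y}) = [QX,Y] = [X,QY] = \omega(\overline{X},\overline{Q}\,\overline{Y})$, so $\overline{Q} \in \operatorname{Sym}(\omega)$. Linearity of $\Phi$ is clear.

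For the kernel: $\Phi(Q)=0$ means exactly $Q(\mathfrak{g}) \subseteq \mathfrak{z}$. Conversely, any $\varphi \in L(\mathfrak{g},\mathfrak{z})$ lies in $\operatorname{QC}(\mathfrak{g})$, since both sides of \eqref{eq:QC_def} vanish. Hence $\ker\Phi = L(\mathfrak{g},\mathfrak{z})$.

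For surjectivity, I fix a linear complement $\mathfrak{m}$ of $\mathfrak{z}$, so that $\pi|_{\mathfrak{m}}\colon \mathfrak{m} \to \mathfrak{g}/\mathfrak{z}$ is an isomorphism. Given $A \in \operatorname{Sym}(\omega)$, I define $Q$ by $Q|_{\mathfrak{m}} = (\pi|_{\mathfrak{m}})^{-1} A\, \pi|_{\mathfrak{m}}$ and $Q|_{\mathfrak{z}} = 0$. Then $\overline{Q} = A$. Since brackets only depend on classes modulo $\mathfrak{z}$, we get $[QX,Y] = \omega(A\overline{X},\overline{Y}) = \omega(\overline{X},A\overline{Y}) = [X,QY]$, so $Q \in \operatorname{QC}(\mathfrak{g})$. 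The isomorphism $\operatorname{QC}(\mathfrak{g})/L(\mathfrak{g},\mathfrak{z}) \cong \operatorname{Sym}(\omega)$ then follows from the first isomorphism theorem. Closure under powers is by induction: $\omega(A^{k}\overline X,\overline Y)=\omega(A^{k-1}\overline X, A\overline Y)=\dots=\omega(\overline X,A^k\overline Y)$, moving one factor of $A$ across at a time.

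For part (ii), I translate Definition \ref{def:centrally_decomposable} through $\pi$. Suppose $\mathfrak{g}=\mathfrak{i}_1+\mathfrak{i}_2$ is a central decomposition. As in the proof of Theorem \ref{thm:qc_central_decomposition}, $\mathfrak{i}_1\cap\mathfrak{i}_2=\mathfrak{z}$. Set $\mathfrak{h}_r=\pi(\mathfrak{i}_r)$. Then $\mathfrak{h}_1+\mathfrak{h}_2=\mathfrak{g}/\mathfrak{z}$ and $\mathfrak{h}_1\cap\mathfrak{h}_2=\pi(\mathfrak{i}_1\cap\mathfrak{i}_2)=0$; the latter holds because both ideals contain $\mathfrak{z}$. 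Each $\mathfrak{h}_r$ is nonzero since $\mathfrak{z}\subsetneq\mathfrak{i}_r$, and $\omega(\mathfrak{h}_1,\mathfrak{h}_2)=[\mathfrak{i}_1,\mathfrak{i}_2]=0$.

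Conversely, given an $\omega$-orthogonal splitting $\mathfrak{g}/\mathfrak{z}=\mathfrak{h}_1\oplus\mathfrak{h}_2$ with nonzero summands, set $\mathfrak{i}_r=\pi^{-1}(\mathfrak{h}_r)$. These satisfy $\mathfrak{z}\subsetneq\mathfrak{i}_r\subsetneq\mathfrak{g}$, $\mathfrak{i}_1+\mathfrak{i}_2=\mathfrak{g}$, and $[\mathfrak{i}_1,\mathfrak{i}_2]=\omega(\mathfrak{h}_1,\mathfrak{h}_2)=0$.

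The only step needing care, and the one I expect to be the main obstacle, is showing that each $\mathfrak{i}_r$ is an ideal. My plan is to mimic the Jacobi argument in Proposition \ref{prop:qc_diagonalizable_iff}. It suffices to show $[\mathfrak{i}_1,\mathfrak{i}_1]\subseteq\mathfrak{i}_1$, since $[\mathfrak{i}_1,\mathfrak{g}]=[\mathfrak{i}_1,\mathfrak{i}_1]+[\mathfrak{i}_1,\mathfrak{i}_2]=[\mathfrak{i}_1,\mathfrak{i}_1]$. Take $X,Y\in\mathfrak{i}_1$ and put $W=[X,Y]$. For any $Z\in\mathfrak{i}_2$, Jacobi gives $[W,Z]=0$, so $\omega(\overline W,\mathfrak{h}_2)=0$. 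Decompose $\overline W=w_1+w_2$ with $w_r\in\mathfrak{h}_r$. Then $\omega(w_2,\mathfrak{h}_2)=\omega(\overline W,\mathfrak{h}_2)-\omega(w_1,\mathfrak{h}_2)=0$, and also $\omega(w_2,\mathfrak{h}_1)=0$ by orthogonality. Non-degeneracy of $\omega$ then forces $w_2=0$. Hence $\overline W\in\mathfrak{h}_1$, i.e.\ $W\in\mathfrak{i}_1$. The same argument applies to $\mathfrak{i}_2$, which completes the equivalence.
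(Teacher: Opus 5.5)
Your proof is correct and follows essentially the same route as the paper. Your lift $Q|_{\mathfrak{m}} = (\pi|_{\mathfrak{m}})^{-1}A\,\pi|_{\mathfrak{m}}$, $Q|_{\mathfrak{z}}=0$ is exactly the paper's $Q = s\circ A\circ\pi$ with section $s=(\pi|_{\mathfrak{m}})^{-1}$. Your Jacobi argument for the ideal property is the paper's argument carried over to $\mathfrak{g}/\mathfrak{z}$, where non-degeneracy of $\omega$ replaces the paper's observation that the $\mathfrak{i}_2$-component of $[X,Y]$ is central.
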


\begin{proof}
(i) By Lemma \ref{qc_preserves_acs}, every $Q \in \operatorname{QC}(\mathfrak{g})$ preserves $\mathfrak{z}$ and induces $\overline{Q} \in \operatorname{Sym}(\omega)$, with $\overline{Q} = 0$ if and only if $Q \in L(\mathfrak{g}, \mathfrak{z})$. To see that $\Phi$ is surjective, note that the canonical projection $\pi\colon \mathfrak{g} \to \mathfrak{g}/\mathfrak{z}$ is a surjective linear map, so it admits a global linear section (a linear right inverse) $s\colon \mathfrak{g}/\mathfrak{z} \to \mathfrak{g}$ satisfying $\pi \circ s = \operatorname{Id}_{\mathfrak{g}/\mathfrak{z}}$. Given any $A \in \operatorname{Sym}(\omega)$, we define the lifted operator $Q \in L(\mathfrak{g}, \mathfrak{g})$ by the composition $Q = s \circ A \circ \pi$. For any $X, Y \in \mathfrak{g}$, we have
\begin{align*}
    [QX, Y] &= \omega(\pi(QX), \pi(Y)) && \text{(by definition of } \omega \text{)} \\
    &= \omega(A\overline{X}, \overline{Y}) && \text{(since } \pi \circ Q = A \circ \pi \text{)} \\
    &= \omega(\overline{X}, A\overline{Y}) && \text{(since } A \in \operatorname{Sym}(\omega) \text{)} \\
    &= \omega(\pi(X), \pi(QY)) && \text{(since } \pi \circ Q = A \circ \pi \text{)} \\
    &= [X, QY], && \text{(by definition of } \omega \text{)}
\end{align*}
so $Q \in \operatorname{QC}(\mathfrak{g})$. Moreover, the induced operator on the quotient satisfies $\overline{Q}(\overline{X}) = \pi(QX) = A \overline{X}$, meaning $\Phi(Q) = A$.

Finally, for any $A \in \operatorname{Sym}(\omega)$, induction on $k \ge 1$ yields
\begin{align*}
    \omega(A^k \overline{X}, \overline{Y}) &= \omega(A^{k-1}\overline{X}, A\overline{Y}) && \text{(since } A \in \operatorname{Sym}(\omega) \text{)} \\
    &= \omega(\overline{X}, A^k \overline{Y}), && \text{(by the inductive hypothesis)}
\end{align*}
so $A^k \in \operatorname{Sym}(\omega)$.

(ii) If $\mathfrak{g}$ is centrally decomposable as $\mathfrak{g} = \mathfrak{i}_1 + \mathfrak{i}_2$, then $\mathfrak{i}_1 \cap \mathfrak{i}_2 = \mathfrak{z}$. Consequently, the quotient spaces $\mathfrak{h}_1 = \mathfrak{i}_1/\mathfrak{z}$ and $\mathfrak{h}_2 = \mathfrak{i}_2/\mathfrak{z}$ form a direct sum $\mathfrak{g}/\mathfrak{z} = \mathfrak{h}_1 \oplus \mathfrak{h}_2$. Since $[\mathfrak{i}_1, \mathfrak{i}_2] = \{0\}$, this decomposition is $\omega$-orthogonal, and because $\mathfrak{z} \subsetneq \mathfrak{i}_k \subsetneq \mathfrak{g}$, both subspaces $\mathfrak{h}_1$ and $\mathfrak{h}_2$ are non-zero.

Conversely, suppose $\mathfrak{g}/\mathfrak{z} = \mathfrak{h}_1 \oplus \mathfrak{h}_2$ for some non-zero subspaces $\mathfrak{h}_1, \mathfrak{h}_2$ satisfying the condition $\omega(\mathfrak{h}_1, \mathfrak{h}_2) = \{0\}$. Let $\mathfrak{i}_1 = \pi^{-1}(\mathfrak{h}_1)$ and $\mathfrak{i}_2 = \pi^{-1}(\mathfrak{h}_2)$ be their preimages, so that $\mathfrak{g} = \mathfrak{i}_1 + \mathfrak{i}_2$ and $\mathfrak{i}_1 \cap \mathfrak{i}_2 = \mathfrak{z}$. The condition $\omega(\mathfrak{h}_1, \mathfrak{h}_2) = \{0\}$ means exactly that $[\mathfrak{i}_1, \mathfrak{i}_2] = \{0\}$. To see that $\mathfrak{i}_1$ is a subalgebra, let $X, Y \in \mathfrak{i}_1$ and write $[X, Y] = U + V$ with $U \in \mathfrak{i}_1$ and $V \in \mathfrak{i}_2$. For any $Z \in \mathfrak{i}_2$, the Jacobi identity and $[\mathfrak{i}_1, \mathfrak{i}_2] = \{0\}$ imply $[Z, [X, Y]] = 0$. Thus, $0 = [Z, U] + [Z, V] = [Z, V]$. This shows $V$ commutes with $\mathfrak{i}_2$, and since it trivially commutes with $\mathfrak{i}_1$, we get $V \in \mathfrak{z} \subset \mathfrak{i}_1$. Hence $[X,Y] \in \mathfrak{i}_1$, proving $[\mathfrak{i}_1, \mathfrak{i}_1] \subseteq \mathfrak{i}_1$. Finally, $[\mathfrak{g}, \mathfrak{i}_1] = [\mathfrak{i}_1 + \mathfrak{i}_2, \mathfrak{i}_1] = [\mathfrak{i}_1, \mathfrak{i}_1] \subseteq \mathfrak{i}_1$, meaning $\mathfrak{i}_1$ (and symmetrically $\mathfrak{i}_2$) is an ideal. This proves $\mathfrak{g}$ is centrally decomposable.
\end{proof}

To characterize when the entire quasicentroid $\operatorname{QC}(\mathfrak{g})$ reduces to the minimal subspace $\mathbb{R}\operatorname{Id}_{\mathfrak{g}} \oplus L(\mathfrak{g}, \mathfrak{z})$, we translate the problem to the quotient algebra $\overline{\mathfrak{g}} = \mathfrak{g}/\mathfrak{z}$. By Lemma \ref{lemma:qc_reduction_sym}(i), this is equivalent to showing that $\operatorname{Sym}(\omega) = \mathbb{R}\operatorname{Id}_{\overline{\mathfrak{g}}}$.

A way to understand how an operator $A \in \operatorname{Sym}(\omega)$ is forced to be a scalar multiple of the identity is through the real Jordan-Chevalley decomposition. Every operator $A$ on a finite-dimensional real vector space admits a unique decomposition $A = A_s + A_n$, where $A_s$ is semisimple (meaning every $A_s$-invariant subspace admits an $A_s$-invariant linear complement, which over $\mathbb{R}$ is equivalent to its minimal polynomial having no repeated factors), $A_n$ is nilpotent, and $A_s A_n = A_n A_s$. Crucially, both $A_s$ and $A_n$ are polynomials in $A$; therefore, since $\operatorname{Sym}(\omega)$ is closed under polynomials, any $A \in \operatorname{Sym}(\omega)$ guarantees $A_s, A_n \in \operatorname{Sym}(\omega)$.

To force $A$ to be a scalar multiple of the identity, we must structurally eliminate any deviation from $\mathbb{R}\operatorname{Id}_{\overline{\mathfrak{g}}}$:
\begin{enumerate}
   \item We must eliminate the nilpotent part ($A_n = 0$). Since $A_n \in \operatorname{Sym}(\omega)$, we must forbid the existence of non-trivial nilpotent operators in $\operatorname{Sym}(\omega)$. Note that forbidding nilpotent operators of arbitrary step $k \ge 2$ is equivalent to forbidding only 2-step nilpotent operators (i.e., square-zero operators).
    \item We must prevent the semisimple part $A_s$ from splitting the space. If its minimal polynomial had multiple distinct irreducible factors, we could construct a non-trivial idempotent operator that is a polynomial in $A_s$. This idempotent would belong to $\operatorname{Sym}(\omega)$, forcing a non-trivial $\omega$-orthogonal decomposition of $\overline{\mathfrak{g}}$, which we forbid by requiring $\mathfrak{g}$ to be centrally indecomposable.
    \item We must prevent the remaining single primary block of $A_s$ from being a rotation (arising from an irreducible quadratic polynomial), which we achieve by forbidding complex structures in $\operatorname{Sym}(\omega)$.
\end{enumerate}

These three structural constraints lead directly to the main characterization.

\begin{theorem}\label{thm:qc_minimal_characterization}
Let $\mathfrak{g}$ be a non-abelian Lie algebra with center $\mathfrak{z}$. Then $\operatorname{QC}(\mathfrak{g}) = \mathbb{R}\operatorname{Id}_{\mathfrak{g}} \oplus L(\mathfrak{g}, \mathfrak{z})$ if and only if the following three conditions hold:
\begin{enumerate}
    \item[(i)] $\mathfrak{g}$ is centrally indecomposable;
    \item[(ii)] $\operatorname{Sym}(\omega)$ contains no complex structure, i.e., there is no $J \in \operatorname{Sym}(\omega)$ such that $J^2 = -\operatorname{Id}_{\overline{\mathfrak{g}}}$;
    \item[(iii)] $\operatorname{Sym}(\omega)$ contains no non-zero square-zero operator, i.e., there is no $0 \neq N \in \operatorname{Sym}(\omega)$ such that $N^2 = 0$.
\end{enumerate}
\end{theorem}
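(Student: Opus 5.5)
By Lemma \ref{lemma:qc_reduction_sym}(i), $\operatorname{QC}(\mathfrak{g}) = \mathbb{R}\operatorname{Id}_{\mathfrak{g}} \oplus L(\mathfrak{g}, \mathfrak{z})$ holds if and only if $\operatorname{Sym}(\omega) = \mathbb{R}\operatorname{Id}_{\overline{\mathfrak{g}}}$. Indeed, $\Phi$ is surjective with kernel $L(\mathfrak{g},\mathfrak{z})$, and $\Phi(\lambda\operatorname{Id}_{\mathfrak{g}}+\varphi)=\lambda \operatorname{Id}_{\overline{\mathfrak{g}}}$. So we work entirely on $\overline{\mathfrak{g}}=\mathfrak{g}/\mathfrak{z}$, which is nonzero because $\mathfrak{g}$ is non-abelian. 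The plan is to prove that $\operatorname{Sym}(\omega)=\mathbb{R}\operatorname{Id}$ if and only if conditions (i)--(iii) hold.

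For necessity, assume $\operatorname{Sym}(\omega)=\mathbb{R}\operatorname{Id}$. A complex structure $J$ cannot be scalar, since $\lambda^2=-1$ has no real solution, so (ii) holds. A nonzero square-zero $N=\lambda\operatorname{Id}$ would force $\lambda=0$, so (iii) holds. For (i), if $\mathfrak{g}$ were centrally decomposable, Lemma \ref{lemma:qc_reduction_sym}(ii) gives $\overline{\mathfrak{g}}=\mathfrak{h}_1\oplus\mathfrak{h}_2$, $\omega$-orthogonal with both summands nonzero. The projection $P$ onto $\mathfrak{h}_1$ along $\mathfrak{h}_2$ satisfies $\omega(P\overline X,\overline Y)=\omega(\overline X_1,\overline Y_1)=\omega(\overline X,P\overline Y)$, so $P\in\operatorname{Sym}(\omega)$, yet $P$ is not scalar. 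This is a contradiction. Alternatively one can invoke Theorem \ref{thm:qc_central_decomposition} directly.

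For sufficiency, let $A\in\operatorname{Sym}(\omega)$ and take its Jordan--Chevalley decomposition $A=A_s+A_n$. Both parts are polynomials in $A$, so both lie in $\operatorname{Sym}(\omega)$, since this space is closed under powers and linear combinations. We then argue in three steps.
\begin{enumerate}
\item[(a)] \emph{Nilpotent part.} Suppose $A_n\neq0$, and let $k\ge2$ be minimal with $A_n^k=0$. Then $N=A_n^{k-1}$ is nonzero, lies in $\operatorname{Sym}(\omega)$, and satisfies $N^2=A_n^{2k-2}=0$ because $2k-2\ge k$. This contradicts (iii), so $A_n=0$ and $A=A_s$ is semisimple.
\item[(b)] \emph{Splitting.} Factor the minimal polynomial of $A$ as $m=p_1\cdots p_r$ with distinct irreducible $p_i$. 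If $r\ge2$, the Chinese remainder theorem gives a polynomial $e$ with $e(A)$ equal to the idempotent projecting onto $\ker p_1(A)$ along $\bigoplus_{i\ge2}\ker p_i(A)$. Then $P=e(A)\in\operatorname{Sym}(\omega)$ is a nontrivial idempotent. Setting $\mathfrak{h}_1=\operatorname{Im}P$ and $\mathfrak{h}_2=\ker P$, for $\overline X\in\mathfrak{h}_1$ and $\overline Y\in\mathfrak{h}_2$ we get $\omega(\overline X,\overline Y)=\omega(P\overline X,\overline Y)=\omega(\overline X,P\overline Y)=0$. This is a nontrivial $\omega$-orthogonal splitting, contradicting (i) via Lemma \ref{lemma:qc_reduction_sym}(ii). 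Hence $r=1$.
\item[(c)] \emph{Single factor.} If $p_1=x-\lambda$, then $A=\lambda\operatorname{Id}$ and we are done. Otherwise $p_1=(x-a)^2+b^2$ with $b\neq0$, and $J=b^{-1}(A-a\operatorname{Id})$ lies in $\operatorname{Sym}(\omega)$ and satisfies $J^2=-\operatorname{Id}$, contradicting (ii).
\end{enumerate}

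The main obstacle is step (b). One has to construct the idempotent explicitly as a polynomial in $A$ and check that its image and kernel are nonzero and $\omega$-orthogonal. Nonzeroness holds because each $\ker p_i(A)$ is nonzero, since each $p_i$ divides the minimal polynomial. The remaining steps are routine.
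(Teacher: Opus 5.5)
Your proof is correct and follows essentially the same route as the paper. You reduce to $\operatorname{Sym}(\omega)=\mathbb{R}\operatorname{Id}_{\overline{\mathfrak{g}}}$ via Lemma~\ref{lemma:qc_reduction_sym}, use the $\omega$-symmetric projection for the necessity of (i), and for sufficiency kill $A_n$ with a square-zero power, split off a nontrivial idempotent polynomial in $A$, and extract a complex structure from an irreducible quadratic factor. The only differences are cosmetic: you take $N=A_n^{k-1}$ rather than $A_n^{\lceil k/2\rceil}$, and you build the idempotent via the Chinese remainder theorem rather than B\'ezout's identity.
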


\begin{proof}
Let $\overline{\mathfrak{g}} = \mathfrak{g}/\mathfrak{z}$. By Lemma \ref{lemma:qc_reduction_sym}(i), the condition $\operatorname{QC}(\mathfrak{g}) = \mathbb{R}\operatorname{Id}_{\mathfrak{g}} \oplus L(\mathfrak{g}, \mathfrak{z})$ is equivalent to $\operatorname{Sym}(\omega) = \mathbb{R}\operatorname{Id}_{\overline{\mathfrak{g}}}$.

Suppose first that $\operatorname{Sym}(\omega) = \mathbb{R}\operatorname{Id}_{\overline{\mathfrak{g}}}$. Then, conditions (ii) and (iii) are trivially satisfied. To verify condition (i), suppose $\overline{\mathfrak{g}}$ admits an $\omega$-orthogonal decomposition $\overline{\mathfrak{g}} = \mathfrak{h}_1 \oplus \mathfrak{h}_2$. Let $P$ be the projection onto $\mathfrak{h}_1$ along $\mathfrak{h}_2$, which satisfies $P^2 = P$. Because the decomposition is $\omega$-orthogonal, $P$ is symmetric with respect to $\omega$: for any $X = X_1 + X_2$ and $Y = Y_1 + Y_2$ (with $X_i, Y_i \in \mathfrak{h}_i$), the orthogonality $\omega(\mathfrak{h}_1, \mathfrak{h}_2) = \omega(\mathfrak{h}_2, \mathfrak{h}_1) = \{0\}$ yields
\begin{align*}
    \omega(PX, Y) &= \omega(X_1, Y_1 + Y_2) && \text{(by definition of } P \text{)} \\
    &= \omega(X_1, Y_1) && \text{(since } \omega(\mathfrak{h}_1, \mathfrak{h}_2) = \{0\} \text{)} \\
    &= \omega(X_1 + X_2, Y_1) && \text{(since } \omega(\mathfrak{h}_2, \mathfrak{h}_1) = \{0\} \text{)} \\
    &= \omega(X, PY), && \text{(by definition of } P \text{)}
\end{align*}
so $P \in \operatorname{Sym}(\omega)$. By our initial assumption, $P$ must be a scalar multiple of the identity, meaning $P = \lambda \operatorname{Id}_{\overline{\mathfrak{g}}}$. The idempotent condition $P^2 = P$ forces $\lambda^2 = \lambda$, so $\lambda \in \{0, 1\}$. This means either $P = 0$ or $P = \operatorname{Id}_{\overline{\mathfrak{g}}}$, which implies $\mathfrak{h}_1 = \{0\}$ or $\mathfrak{h}_2 = \{0\}$. Therefore, $\overline{\mathfrak{g}}$ admits no non-trivial $\omega$-orthogonal decompositions, and Lemma \ref{lemma:qc_reduction_sym}(ii) concludes that $\mathfrak{g}$ is centrally indecomposable.

Conversely, assume that conditions (i), (ii), and (iii) hold, and take any $A \in \operatorname{Sym}(\omega)$. Consider its real Jordan-Chevalley decomposition $A = A_s + A_n$ where $A_s$ is semisimple and  $A_n$ is nilpotent. Since $A_n$ is polynomial in $A$, we have $A_n \in \operatorname{Sym}(\omega)$. If $A_n \neq 0$, it has an index of nilpotence $k \ge 2$. Then the operator $N = A_n^{\lceil k/2 \rceil}$ is a non-zero polynomial in $A_n$, meaning $N \in \operatorname{Sym}(\omega)$, and satisfies $N^2 = 0$, which violates condition (iii). Thus, we must have $A_n = 0$, and so $A = A_s$ is semisimple.

Since $A$ is semisimple, its minimal polynomial $m_A(t)$ has no repeated factors. Suppose $m_A(t)$ has two or more distinct irreducible factors. We can write $m_A(t) = f(t)h(t)$, where $f(t)$ and $h(t)$ are relatively prime, non-constant polynomials. B\'ezout's identity provides polynomials $u(t)$ and $v(t)$ such that $u(t)f(t) + v(t)h(t) = 1$. Evaluating this relation at the operator $A$ yields $u(A)f(A) + v(A)h(A) = \operatorname{Id}_{\overline{\mathfrak{g}}}$. We define $P = u(A)f(A)$. Because $f(A)h(A) = m_A(A) = 0$, it follows that $P(\operatorname{Id}_{\overline{\mathfrak{g}}} - P) = u(A)v(A)f(A)h(A) = 0$, meaning $P^2 = P$. Since $f$ and $h$ are proper factors of the minimal polynomial, $P$ is a non-trivial idempotent ($P \neq 0$ and $P \neq \operatorname{Id}_{\overline{\mathfrak{g}}}$). Furthermore, because $P$ is a polynomial in $A$, it belongs to $\operatorname{Sym}(\omega)$. Any idempotent operator naturally decomposes the space as $\overline{\mathfrak{g}} = \operatorname{Im}(P) \oplus \ker(P)$, and the fact that $P \in  \operatorname{Sym}(\omega)$ ensures this decomposition is $\omega$-orthogonal: for any $x \in \operatorname{Im}(P)$ and $y \in \ker(P)$, we compute:
\begin{align*}
    \omega(x, y) &= \omega(Px, y) && \text{(since } Px = x \text{ for } x \in \operatorname{Im}(P) \text{)} \\
    &= \omega(x, Py) && \text{(since } P \in \operatorname{Sym}(\omega) \text{)} \\
    &= \omega(x, 0) = 0. && \text{(since } y \in \ker(P) \text{)}
\end{align*}
This $\omega$-orthogonal decomposition implies $\mathfrak{g}$ is centrally decomposable by Lemma \ref{lemma:qc_reduction_sym}(ii), violating condition (i).

Therefore, $m_A(t)$ must consist of a single irreducible factor over $\mathbb{R}$. If this factor were quadratic, then $m_A(t) = (t-\alpha)^2 + \beta^2$ with $\beta \neq 0$. The operator $J = \beta^{-1}(A - \alpha \operatorname{Id}_{\overline{\mathfrak{g}}})$ would be a polynomial in $A$, hence $J \in \operatorname{Sym}(\omega)$, and would satisfy $J^2 = -\operatorname{Id}_{\overline{\mathfrak{g}}}$, violating condition (ii).

The only remaining structural possibility is that $m_A(t) = t - \lambda$ for some $\lambda \in \mathbb{R}$. This immediately implies $A = \lambda \operatorname{Id}_{\overline{\mathfrak{g}}}$. Thus, $\operatorname{Sym}(\omega) = \mathbb{R}\operatorname{Id}_{\overline{\mathfrak{g}}}$.
\end{proof}

We next establish a general structural criterion that guarantees the minimality of the quasicentroid whenever the Lie algebra admits an element whose centralizer modulo the center is one-dimensional.

\begin{theorem}\label{thm:qc_centralizer_criterion}
Let $\mathfrak{g}$ be a non-abelian Lie algebra with center $\mathfrak{z}$. If there exists an element $X_0 \in \mathfrak{g} \setminus \mathfrak{z}$ whose centralizer $C_{\mathfrak{g}}(X_0) = \ker(\operatorname{ad}_{X_0})$ satisfies
\begin{equation*}
    C_{\mathfrak{g}}(X_0) = \mathbb{R}X_0 \oplus \mathfrak{z},
\end{equation*}
then the quasicentroid of $\mathfrak{g}$ is minimal; that is,
\begin{equation*}
    \operatorname{QC}(\mathfrak{g}) = \mathbb{R}\operatorname{Id}_{\mathfrak{g}} \oplus L(\mathfrak{g}, \mathfrak{z}).
\end{equation*}
\end{theorem}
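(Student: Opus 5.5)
Let $Q \in \operatorname{QC}(\mathfrak{g})$. The plan is to show directly that $Q - \lambda\operatorname{Id}_{\mathfrak{g}}$ takes values in $\mathfrak{z}$ for a suitable $\lambda$, without going through Theorem \ref{thm:qc_minimal_characterization}. The key observation is that the quasicentroid condition with $X = Y$ gives $[QX, X] = 0$ for every $X$. Applying this to $X_0$ yields $QX_0 \in C_{\mathfrak{g}}(X_0) = \mathbb{R}X_0 \oplus \mathfrak{z}$, so
\begin{equation*}
QX_0 = \lambda X_0 + z_0
\end{equation*}
for some $\lambda \in \mathbb{R}$ and $z_0 \in \mathfrak{z}$. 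Replace $Q$ by $Q' = Q - \lambda\operatorname{Id}_{\mathfrak{g}}$, which still lies in $\operatorname{QC}(\mathfrak{g})$ by \eqref{eq:minimal_inclusion} and satisfies $Q'X_0 \in \mathfrak{z}$. It then suffices to prove that $Q'(\mathfrak{g}) \subseteq \mathfrak{z}$.

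For an arbitrary $Y \in \mathfrak{g}$, apply the quasicentroid identity to the pair $(X_0, Y)$:
\begin{equation*}
[X_0, Q'Y] = [Q'X_0, Y] = 0,
\end{equation*}
since $Q'X_0$ is central. Hence $Q'Y \in C_{\mathfrak{g}}(X_0) = \mathbb{R}X_0 \oplus \mathfrak{z}$. Thus there is a linear functional $\mu$ on $\mathfrak{g}$ with $Q'Y \equiv \mu(Y) X_0 \pmod{\mathfrak{z}}$. It remains to show that $\mu = 0$.

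The main obstacle is killing $\mu$. I would use the polarized condition once more: for all $Y, W$,
\begin{equation*}
\mu(Y)[X_0, W] = [Q'Y, W] = [Y, Q'W] = \mu(W)[Y, X_0].
\end{equation*}
This gives $[X_0, \mu(Y)W + \mu(W)Y] = 0$, so $\mu(Y)W + \mu(W)Y \in \mathbb{R}X_0 \oplus \mathfrak{z}$ for all $Y, W$. Suppose $\mu \neq 0$ and pick $Y$ with $\mu(Y) = 1$. Taking $W = Y$ gives $2Y \in \mathbb{R}X_0 \oplus \mathfrak{z}$. Then for any $W$, the element $W = (\mu(Y)W + \mu(W)Y) - \mu(W)Y$ also lies in $\mathbb{R}X_0 \oplus \mathfrak{z}$. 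So $\mathfrak{g} = \mathbb{R}X_0 \oplus \mathfrak{z}$, which is abelian, contradicting the hypothesis that $\mathfrak{g}$ is non-abelian. Therefore $\mu = 0$, so $Q' \in L(\mathfrak{g}, \mathfrak{z})$ and $Q = \lambda\operatorname{Id}_{\mathfrak{g}} + Q' \in \mathbb{R}\operatorname{Id}_{\mathfrak{g}} \oplus L(\mathfrak{g}, \mathfrak{z})$. The reverse inclusion is \eqref{eq:minimal_inclusion}, and the sum is direct because $\mathfrak{g} \neq \mathfrak{z}$.
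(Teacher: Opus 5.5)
Your proof is correct and is the paper's argument, just carried out in $\mathfrak{g}$ rather than in $\mathfrak{g}/\mathfrak{z}$. The paper works with $A \in \operatorname{Sym}(\omega)$: it gets $A\bar X_0=\lambda\bar X_0$, shows $B=A-\lambda\operatorname{Id}$ maps into $\mathbb{R}\bar X_0$, and then kills $\mu$ via $\mu(\bar Y)\,\omega(\bar X_0,\bar Y)=0$ together with $\mu(\bar X_0)=0$; you avoid Lemma \ref{lemma:qc_reduction_sym} (and hence the invariance $Q(\mathfrak{z})\subseteq\mathfrak{z}$) and instead kill $\mu$ with the polarized identity and the non-abelian hypothesis, which works equally well.
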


\begin{proof}
To prove the general statement, recall from Lemma \ref{lemma:qc_reduction_sym}(i) that it suffices to show $\operatorname{Sym}(\omega) = \mathbb{R}\operatorname{Id}_{\overline{\mathfrak{g}}}$. Let $\bar{X}_0 \in \overline{\mathfrak{g}}$ denote the coset $X_0 + \mathfrak{z}$. Because $X_0 \notin \mathfrak{z}$, the vector $\bar{X}_0$ is non-zero. We first determine the kernel of the linear map $\omega(\bar{X}_0, \cdot)\colon \overline{\mathfrak{g}} \to \mathfrak{g}$. An element $\bar{Y} \in \overline{\mathfrak{g}}$ lies in this kernel if and only if $\omega(\bar{X}_0, \bar{Y}) = [X_0, Y] = 0$ in $\mathfrak{g}$, which means precisely that $Y \in C_{\mathfrak{g}}(X_0) = \mathbb{R}X_0 \oplus \mathfrak{z}$. Passing to the quotient space $\overline{\mathfrak{g}}$, this condition is equivalent to $\bar{Y} \in \mathbb{R}\bar{X}_0$. Consequently,
\begin{equation*}
    \ker(\omega(\bar{X}_0, \cdot)) = \mathbb{R}\bar{X}_0.
\end{equation*}

Now let $A \in \operatorname{Sym}(\omega)$ be an arbitrary operator. For every $\bar{Y} \in \overline{\mathfrak{g}}$, combining the $\omega$-symmetry of $A$ with the skew-symmetry of $\omega$ yields
\begin{equation*}
    \omega(A\bar{Y}, \bar{Y}) = \omega(\bar{Y}, A\bar{Y}) = -\omega(A\bar{Y}, \bar{Y}),
\end{equation*}
which implies that $\omega(A\bar{Y}, \bar{Y}) = 0$ for all $\bar{Y} \in \overline{\mathfrak{g}}$. Evaluating this identity at $\bar{Y} = \bar{X}_0$, we obtain $\omega(\bar{X}_0, A\bar{X}_0) = -\omega(A\bar{X}_0, \bar{X}_0) = 0$. Thus $A\bar{X}_0 \in \ker(\omega(\bar{X}_0, \cdot)) = \mathbb{R}\bar{X}_0$, so there exists a scalar $\lambda \in \mathbb{R}$ such that
\begin{equation*}
    A\bar{X}_0 = \lambda \bar{X}_0.
\end{equation*}

Define the linear operator $B = A - \lambda \operatorname{Id}_{\overline{\mathfrak{g}}}$. Since $\operatorname{Sym}(\omega)$ is a linear subspace containing the identity operator, we have $B \in \operatorname{Sym}(\omega)$ and, by construction, $B\bar{X}_0 = 0$. For any element $\bar{Y} \in \overline{\mathfrak{g}}$, the symmetry of $B$ with respect to $\omega$ gives
\begin{align*}
    \omega(\bar{X}_0, B\bar{Y}) &= \omega(B\bar{X}_0, \bar{Y}) && \text{(since } B \in \operatorname{Sym}(\omega) \text{)} \\
    &= \omega(0, \bar{Y}) = 0. && \text{(since } B\bar{X}_0 = 0 \text{)}
\end{align*}
Therefore, $B\bar{Y}$ belongs to $\ker(\omega(\bar{X}_0, \cdot)) = \mathbb{R}\bar{X}_0$ for every $\bar{Y} \in \overline{\mathfrak{g}}$. This guarantees the existence of a linear functional $\mu\colon \overline{\mathfrak{g}} \to \mathbb{R}$ such that
\begin{equation*}
    B\bar{Y} = \mu(\bar{Y})\bar{X}_0 \quad \text{for all } \bar{Y} \in \overline{\mathfrak{g}}.
\end{equation*}
Note also that because $B\bar{X}_0 = 0$ and $\bar{X}_0 \neq 0$, this functional satisfies $\mu(\bar{X}_0) = 0$.

Finally, we show that $\mu$ vanishes identically. Since $B \in \operatorname{Sym}(\omega)$, the identity $\omega(B\bar{Y}, \bar{Y}) = 0$ holds for every $\bar{Y} \in \overline{\mathfrak{g}}$. Substituting $B\bar{Y} = \mu(\bar{Y})\bar{X}_0$ into this relation, the bilinearity of $\omega$ yields
\begin{equation*}
    0 = \omega(\mu(\bar{Y})\bar{X}_0, \bar{Y}) = \mu(\bar{Y})\omega(\bar{X}_0, \bar{Y}).
\end{equation*}
For any element $\bar{Y} \in \overline{\mathfrak{g}}$ not lying in the one-dimensional subspace $\mathbb{R}\bar{X}_0 = \ker(\omega(\bar{X}_0, \cdot))$, the vector $\omega(\bar{X}_0, \bar{Y})$ is non-zero in $\mathfrak{g}$, which forces $\mu(\bar{Y}) = 0$. Since $\mu(\bar{X}_0) = 0$ also holds, the linearity of $\mu$ dictates that it must vanish identically on $\overline{\mathfrak{g}}$. Consequently, $B = 0$, which yields $A = \lambda \operatorname{Id}_{\overline{\mathfrak{g}}}$. This establishes that $\operatorname{Sym}(\omega) = \mathbb{R}\operatorname{Id}_{\overline{\mathfrak{g}}}$ and completes the proof.
\end{proof}

As a direct application of Theorem \ref{thm:qc_centralizer_criterion}, we can determine the quasicentroid of any filiform Lie algebra. To properly contextualize this class, recall that the lower central series of a Lie algebra $\mathfrak{g}$ is defined inductively by $\mathfrak{g}^0 = \mathfrak{g}$ and $\mathfrak{g}^k = [\mathfrak{g}, \mathfrak{g}^{k-1}]$ for all integers $k \ge 1$. The algebra $\mathfrak{g}$ is called \emph{nilpotent} if this series eventually vanishes. When the series vanishes exactly at the $s$-th term; that is, $\mathfrak{g}^s = \{0\}$ while $\mathfrak{g}^{s-1} \neq \{0\}$, we say that $s$ is the \emph{step of nilpotency} of $\mathfrak{g}$ (and refer to it as an $s$-step nilpotent Lie algebra). For any nilpotent Lie algebra, the dimension of the ideals $\mathfrak{g}^k$ must strictly decrease at each step until reaching zero. Consequently, the maximum possible nilpotency step for an $n$-dimensional nilpotent Lie algebra is $n-1$. Algebras attaining this extreme bound form a distinguished structural class.

\begin{definition}\label{def:filiform}
A nilpotent Lie algebra $\mathfrak{g}$ of dimension $n \ge 3$ is called \emph{filiform} if its step of nilpotency is $n-1$.
\end{definition}

As shown by Mich\`ele Vergne in her work on the variety of nilpotent Lie algebras \cite{Vergne1970}, every filiform Lie algebra admits a special basis compatible with its lower central series.

\begin{proposition}[\cite{Vergne1970}]
\label{prop:vergne_basis}
Let $\mathfrak{g}$ be a filiform Lie algebra of dimension $n \ge 3$ over $\mathbb{R}$. Then there exists a basis $\{e_1, e_2, \dots, e_n\}$ of $\mathfrak{g}$ satisfying the following properties:
\begin{enumerate}
    \item[(i)] $[e_1, e_i] = e_{i+1}$ for all $2 \le i \le n-1$, and $[e_1, e_n] = 0$;
    \item[(ii)] for each $1 \le k \le n-2$, the ideal $\mathfrak{g}^k$ in the lower central series is given by $$\mathfrak{g}^k = \operatorname{span}\{e_{k+2}, \dots, e_n\}.$$
\end{enumerate}
\end{proposition}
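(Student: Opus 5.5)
The plan is to extract everything from the dimensions of the lower central series and then choose $e_1$ generically. First I will show that a filiform $\mathfrak{g}$ has $\dim \mathfrak{g}/\mathfrak{g}^1 = 2$ and $\dim \mathfrak{g}^k/\mathfrak{g}^{k+1} = 1$ for $1 \le k \le n-2$.

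Since $\mathfrak{g}$ is nilpotent and non-abelian, $\dim \mathfrak{g}/\mathfrak{g}^1 \ge 2$. Indeed, if $\mathfrak{g} = \mathbb{R}x + \mathfrak{g}^1$, then $\mathfrak{g}^1 = [\mathfrak{g},\mathfrak{g}] = [\mathbb{R}x + \mathfrak{g}^1, \mathbb{R}x + \mathfrak{g}^1] \subseteq \mathfrak{g}^2$. Then $\mathfrak{g}^1 = \mathfrak{g}^2 = \mathfrak{g}^3 = \cdots$, which forces $\mathfrak{g}^1 = 0$ and contradicts non-abelianity. The terms $\mathfrak{g}^1 \supsetneq \mathfrak{g}^2 \supsetneq \dots \supsetneq \mathfrak{g}^{n-2} \supsetneq \mathfrak{g}^{n-1} = 0$ are strictly decreasing. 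This gives $n-2$ strict drops starting from dimension at most $n-2$, so every drop is exactly one. Hence $\dim \mathfrak{g}^k = n-k-1$ for $1 \le k \le n-1$, and $\mathfrak{g}^{n-2}$ is one-dimensional and central.

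Next, for each $0 \le k \le n-3$ I will study the bilinear map
\begin{equation*}
\beta_k\colon \mathfrak{g}/\mathfrak{g}^1 \times \mathfrak{g}^k/\mathfrak{g}^{k+1} \to \mathfrak{g}^{k+1}/\mathfrak{g}^{k+2}, \qquad (\bar x, \bar y) \mapsto \overline{[x,y]}.
\end{equation*}
It is well defined because $[\mathfrak{g}^a, \mathfrak{g}^b] \subseteq \mathfrak{g}^{a+b+1}$, which follows from the Jacobi identity by induction. It is non-zero because $\mathfrak{g}^{k+1} = [\mathfrak{g}, \mathfrak{g}^k] \ne \mathfrak{g}^{k+2}$.

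For $k \ge 1$, fix a generator $\bar y_k$ of the line $\mathfrak{g}^k/\mathfrak{g}^{k+1}$. Then $\bar x \mapsto \beta_k(\bar x, \bar y_k)$ is a non-zero functional on the plane $\mathfrak{g}/\mathfrak{g}^1$, so its kernel $K_k$ is a line. For $k = 0$, $\beta_0$ is a non-zero alternating form on a $2$-dimensional space, so $\beta_0(\bar x, \cdot) \ne 0$ for every $\bar x \ne 0$.

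The plane is not a union of finitely many lines over $\mathbb{R}$. So I can choose $e_1 \in \mathfrak{g}$ with $\bar e_1 \ne 0$ and $\bar e_1 \notin K_1 \cup \dots \cup K_{n-3}$. Then I pick $e_2$ with $\bar e_1, \bar e_2$ a basis of $\mathfrak{g}/\mathfrak{g}^1$, and define $e_{i+1} = [e_1, e_i]$ for $2 \le i \le n-1$.

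I will prove by induction that $e_{k+2} \in \mathfrak{g}^k \setminus \mathfrak{g}^{k+1}$ for $0 \le k \le n-2$. The case $k = 0$ holds by the choice of $e_2$. For the inductive step with $k = 0$, $[e_1, e_2] \notin \mathfrak{g}^2$ because $\beta_0(\bar e_1, \bar e_2) \ne 0$, as $\beta_0$ is non-degenerate on the plane. For $k \ge 1$, $e_{k+2}$ represents a non-zero multiple of $\bar y_k$, and $\bar e_1 \notin K_k$, so $[e_1, e_{k+2}] \in \mathfrak{g}^{k+1} \setminus \mathfrak{g}^{k+2}$.

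Because each quotient $\mathfrak{g}^k/\mathfrak{g}^{k+1}$ for $k \ge 1$ is one-dimensional, it follows that $\mathfrak{g}^k = \operatorname{span}\{e_{k+2}, \dots, e_n\}$. Together with $\bar e_1, \bar e_2$ spanning $\mathfrak{g}/\mathfrak{g}^1$, this shows $\{e_1, \dots, e_n\}$ is a basis. Finally, $[e_1, e_n] = 0$ because $e_n$ spans the central ideal $\mathfrak{g}^{n-2}$.

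I expect the main obstacle to be the simultaneous choice of $e_1$. A single $e_1$ must push each graded piece onto the next one at every level, and this is exactly where the avoidance of the finitely many lines $K_k$ is needed. The dimension count and the bookkeeping for $\beta_k$ are routine by comparison.
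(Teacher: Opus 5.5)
Your proof is correct and complete. The paper gives no proof of this proposition; it quotes it from Vergne. So there is no internal argument to compare against, and what you supply is a standard self-contained justification.

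The individual steps all check out.
\begin{enumerate}
\item The dimension count is right. The equality $\mathfrak{g}=\mathbb{R}x+\mathfrak{g}^1$ would force $\mathfrak{g}^1=\mathfrak{g}^2$, hence $\mathfrak{g}^1=0$. The $n-2$ strict drops from $\mathfrak{g}^1$ to $\mathfrak{g}^{n-1}=0$ then each have size one.
\item The maps $\beta_k$ are well defined because $[\mathfrak{g}^1,\mathfrak{g}^k]\subseteq\mathfrak{g}^{k+2}$.
\item The induction $e_{k+2}\in\mathfrak{g}^k\setminus\mathfrak{g}^{k+1}$ uses exactly the lines $K_1,\dots,K_{n-3}$ that you avoid.
\end{enumerate}

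Two points are only cosmetic. First, $\bar x\mapsto\beta_k(\bar x,\bar y_k)$ takes values in the line $\mathfrak{g}^{k+1}/\mathfrak{g}^{k+2}$, not in $\mathbb{R}$, which is harmless. Second, in the case $k=0$, $\beta_0(\bar e_1,\bar e_2)\neq 0$ follows from three facts together: $\beta_0(\bar e_1,\cdot)\neq 0$, $\beta_0(\bar e_1,\bar e_1)=0$, and $\{\bar e_1,\bar e_2\}$ is a basis.

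Your argument also gives slightly more than is stated. Any $e_1$ outside $\mathfrak{g}^1\cup\pi^{-1}(K_1)\cup\dots\cup\pi^{-1}(K_{n-3})$ works, where $\pi\colon\mathfrak{g}\to\mathfrak{g}/\mathfrak{g}^1$ and each $\pi^{-1}(K_k)$ is a hyperplane. So a generic element is admissible, and this step only uses that the ground field is infinite.

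It also delivers exactly what Corollary~\ref{cor:qc_filiform} needs. You get $\ker\operatorname{ad}_{e_1}=\operatorname{span}\{e_1,e_n\}$ with $e_n$ spanning the central ideal $\mathfrak{g}^{n-2}$. Since $[e_1,e_2]=e_3\neq 0$, this gives $\mathfrak{z}=\mathbb{R}e_n$, the fact the corollary asserts without proof.
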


\begin{corollary}\label{cor:qc_filiform}
If $\mathfrak{g}$ is a filiform Lie algebra, then its quasicentroid is minimal; that is,
\begin{equation*}
    \operatorname{QC}(\mathfrak{g}) = \mathbb{R}\operatorname{Id}_{\mathfrak{g}} \oplus L(\mathfrak{g}, \mathfrak{z}).
\end{equation*}
\end{corollary}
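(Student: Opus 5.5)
The plan is to apply Theorem \ref{thm:qc_centralizer_criterion} with $X_0 = e_1$, where $\{e_1, \dots, e_n\}$ is the Vergne basis of Proposition \ref{prop:vergne_basis}. It then suffices to show two things: that $\mathfrak{z} = \mathbb{R}e_n$, and that $C_{\mathfrak{g}}(e_1) = \mathbb{R}e_1 \oplus \mathbb{R}e_n$. Since $\mathfrak{g}$ is nilpotent and non-zero, it is non-abelian when $n \ge 3$ (its step is $n-1 \ge 2$), so the standing hypothesis of the theorem holds.

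First I compute the centralizer of $e_1$. Take $Y = \sum_{i=1}^n a_i e_i$. Proposition \ref{prop:vergne_basis}(i) gives
\[
[e_1, Y] = \sum_{i=2}^{n-1} a_i e_{i+1}.
\]
The vectors $e_3, \dots, e_n$ are linearly independent, so this bracket vanishes if and only if $a_2 = \dots = a_{n-1} = 0$. Hence $C_{\mathfrak{g}}(e_1) = \operatorname{span}\{e_1, e_n\}$.

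Next I identify the center. By Proposition \ref{prop:vergne_basis}(ii) with $k = n-2$, we have $\mathfrak{g}^{n-2} = \mathbb{R}e_n$. Since the step is $n-1$, $[\mathfrak{g}, \mathfrak{g}^{n-2}] = \mathfrak{g}^{n-1} = \{0\}$, so $e_n \in \mathfrak{z}$. Conversely, every central element commutes with $e_1$, so $\mathfrak{z} \subseteq \operatorname{span}\{e_1, e_n\}$. Moreover $e_1 \notin \mathfrak{z}$ because $[e_1, e_2] = e_3 \neq 0$. A central element $a e_1 + b e_n$ therefore satisfies $a e_3 = [a e_1 + b e_n, e_2] = 0$, which forces $a = 0$. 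Hence $\mathfrak{z} = \mathbb{R}e_n$.

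Combining the two steps, $e_1 \in \mathfrak{g} \setminus \mathfrak{z}$ and $C_{\mathfrak{g}}(e_1) = \mathbb{R}e_1 \oplus \mathfrak{z}$. Theorem \ref{thm:qc_centralizer_criterion} then yields $\operatorname{QC}(\mathfrak{g}) = \mathbb{R}\operatorname{Id}_{\mathfrak{g}} \oplus L(\mathfrak{g}, \mathfrak{z})$. There is no serious obstacle. The only point needing care is pinning down the center exactly, so that the centralizer really splits as $\mathbb{R}e_1 \oplus \mathfrak{z}$, and this is settled by the computation above together with $\mathfrak{g}^{n-2} = \mathbb{R}e_n$.
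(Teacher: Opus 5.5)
Your proposal is correct and follows essentially the same route as the paper: Theorem~\ref{thm:qc_centralizer_criterion} applied with $X_0 = e_1$ in the Vergne basis, after computing $C_{\mathfrak{g}}(e_1) = \operatorname{span}\{e_1, e_n\}$. Your explicit derivation of $\mathfrak{z} = \mathbb{R}e_n$ from $\mathfrak{g}^{n-2} = \mathbb{R}e_n$ and $[e_1, e_2] = e_3$ fills in a step the paper simply asserts as a known fact about filiform algebras.
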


\begin{proof}
Let $\{e_1, e_2, \dots, e_n\}$ be a basis of $\mathfrak{g}$ provided by Proposition \ref{prop:vergne_basis}. Given an arbitrary element $x = \sum_{i=1}^n c_i e_i \in \mathfrak{g}$, property (i) of that proposition yields
\begin{equation*}
    \operatorname{ad}_{e_1}(x) = [e_1, x] = \sum_{i=2}^{n-1} c_i e_{i+1}.
\end{equation*}
Because the vectors $e_3, \dots, e_n$ are linearly independent, we have $[e_1, x] = 0$ if and only if $c_2 = \dots = c_{n-1} = 0$. Consequently, the centralizer of $e_1$ is the two-dimensional subspace
\begin{equation*}
    C_{\mathfrak{g}}(e_1) = \ker(\operatorname{ad}_{e_1}) = \operatorname{span}\{e_1, e_n\}.
\end{equation*}
Furthermore, since filiform Lie algebras have a one-dimensional center and it is generated, in this presentation, by $e_n$, we obtain $C_{\mathfrak{g}}(e_1) = \mathbb{R}e_1 \oplus \mathfrak{z}$. Applying Theorem \ref{thm:qc_centralizer_criterion} with $X_0 = e_1$ yields the result.
\end{proof}

\section{Structural Implications of a Quasicentroid Ricci Operator}

In this section, we focus on studying the structure of nilpotent Riemannian Lie algebras $(\mathfrak{n}, \langle \cdot, \cdot \rangle)$ without abelian factors whose Ricci operator belongs to the quasicentroid.

As a first step to understand this structure, we analyze the behavior of $\operatorname{Ric}$ with respect to the center $\mathfrak{z}$ of $\mathfrak{n}$. By Lemma \ref{qc_preserves_acs}, the assumption $\operatorname{Ric} \in \operatorname{QC}(\mathfrak{n})$ implies that $\operatorname{Ric}(\mathfrak{z}) \subseteq \mathfrak{z}$. Because $\operatorname{Ric}$ is self-adjoint with respect to $\langle \cdot, \cdot \rangle$, it also preserves the orthogonal complement $\mathfrak{z}^\perp$. From basic linear algebra, this only guarantees that each eigenspace splits orthogonally as $E_{\lambda_j} = (E_{\lambda_j} \cap \mathfrak{z}^\perp) \oplus (E_{\lambda_j} \cap \mathfrak{z})$. However, using the classical Ricci formula for nilpotent Lie algebras, we can prove a much stronger geometric property: the eigenvalues of $\operatorname{Ric}$ on $\mathfrak{z}$ are strictly positive, while those on $\mathfrak{z}^\perp$ are strictly negative. In particular, each eigenspace of $\operatorname{Ric}$ is entirely contained either in $\mathfrak{z}$ or in $\mathfrak{z}^\perp$.

To establish this, recall that the Ricci operator of a general Riemannian Lie algebra $(\mathfrak{g}, \langle \cdot, \cdot \rangle)$ takes the form (see, e.g., \cite[Corollary 7.38]{Besse})
\begin{equation}\label{ricci}
    \operatorname{Ric} = R - \frac{1}{2}B - \operatorname{ad}_{H}^{\operatorname{sym}},
\end{equation}
where $R$ is the symmetric operator defined by
\begin{equation}\label{momentmap}
\langle R X , X\rangle = -\frac{1}{2}\sum_{i,j=1}^n \langle [X,e_i],e_j\rangle^{2} + \frac{1}{4}\sum_{i,j=1}^n \langle [e_i,e_j] ,X \rangle^{2} \quad \text{for all } X\in \mathfrak{g},
\end{equation}
for an arbitrary orthonormal basis $\{e_1, \dots, e_n\}$ of $\mathfrak{g}$, the linear map $B$ is the symmetric operator representing the Killing form of $\mathfrak{g}$ with respect to $\langle \cdot, \cdot \rangle$, that is,
\begin{equation*}
\langle B X,Y \rangle = \operatorname{tr}(\operatorname{ad}_X  \operatorname{ad}_Y),
\end{equation*}
and $\operatorname{ad}_{H}^{\operatorname{sym}}$ denotes the symmetric part of $\operatorname{ad}_{H}$, where $H \in \mathfrak{g}$ is the \textit{mean curvature vector} defined by
\begin{equation*}
\operatorname{tr}(\operatorname{ad}_X) = \langle H, X\rangle.
\end{equation*}

\begin{lemma}\label{lemma:ricci_eigenspace_splitting}
Let $(\mathfrak{n}, \langle \cdot, \cdot \rangle)$ be a nilpotent Riemannian Lie algebra with no abelian factor, let $\mathfrak{z}$ denote its center, and suppose that $\operatorname{Ric} \in \operatorname{QC}(\mathfrak{n})$. Let $\lambda_1, \dots, \lambda_s$ be the distinct real eigenvalues of $\operatorname{Ric}$, with corresponding eigenspaces $E_{\lambda_1}, \dots, E_{\lambda_s}$. Then:
\begin{enumerate}
    \item[(i)] If $E_{\lambda_j} \cap \mathfrak{z} \neq \{0\}$, then $\lambda_j > 0$.
    \item[(ii)] If $E_{\lambda_j} \cap \mathfrak{z}^\perp \neq \{0\}$, then $\lambda_j < 0$.
\end{enumerate}
Consequently, $\operatorname{Ric}$ is non-singular and each eigenspace $E_{\lambda_j}$ is contained either in $\mathfrak{z}^\perp$ or in $\mathfrak{z}$. Up to reordering the eigenvalues, there exists an integer $1 \le r < s$ such that $\lambda_j < 0$ for $1 \le j \le r$ and $\lambda_j > 0$ for $r+1 \le j \le s$, yielding the orthogonal direct sum decompositions
\begin{equation*}
    \mathfrak{z}^\perp = \bigoplus_{j=1}^r E_{\lambda_j} \quad \text{and} \quad \mathfrak{z} = \bigoplus_{j=r+1}^s E_{\lambda_j}.
\end{equation*}
\end{lemma}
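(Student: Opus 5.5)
The plan is to exploit the fact that for a nilpotent Lie algebra the formula \eqref{ricci} collapses. The Killing form vanishes, so $B=0$. Every $\operatorname{ad}_X$ is nilpotent, hence traceless, so $H=0$. Thus $\operatorname{Ric}=R$, and $\langle \operatorname{Ric}X,X\rangle$ is given by \eqref{momentmap}. Throughout I fix an orthonormal basis $\{e_i\}$ adapted to the orthogonal eigenspace decomposition of $\operatorname{Ric}$, refined so that each $E_{\lambda}$ is split as $(E_\lambda\cap\mathfrak{z})\oplus(E_\lambda\cap\mathfrak{z}^\perp)$. This refinement is possible by the remark preceding the lemma: $\operatorname{Ric}$ preserves $\mathfrak{z}$ by Lemma \ref{qc_preserves_acs}, and it is self-adjoint.

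For (i), take a unit vector $Z\in E_{\lambda_j}\cap\mathfrak{z}$. The first term of \eqref{momentmap} vanishes because $Z$ is central, so
\begin{equation*}
\lambda_j=\tfrac14\sum_{i,k}\langle[e_i,e_k],Z\rangle^2\ge 0 .
\end{equation*}
If equality holds, then $Z\perp[\mathfrak{n},\mathfrak{n}]$. In that case $Z^\perp$ contains $[\mathfrak{n},\mathfrak{n}]$ and is therefore an ideal. Also $\mathbb{R}Z$ is a central ideal, so $\mathfrak{n}=Z^\perp\oplus\mathbb{R}Z$ exhibits an abelian factor, contradicting the hypothesis. Hence $\lambda_j>0$.

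For (ii), I will not work with a single vector but trace over the whole block $W=E_{\lambda}\cap\mathfrak{z}^\perp$, with $d=\dim W\ge1$ and orthonormal basis $\{w_1,\dots,w_d\}$ taken from the adapted basis. The key input is Proposition \ref{prop:qc_diagonalizable_iff}, which gives two facts. First, $[E_\mu,E_\nu]=0$ for $\mu\neq\nu$. Second, $[E_\mu,E_\mu]\subseteq E_\mu+\mathfrak{z}$. Since $w_k$ is orthogonal to $\mathfrak{z}$ and to every $E_\mu$ with $\mu\ne\lambda$, the term $\langle[e_i,e_k'],w_k\rangle$ can be nonzero only when both $e_i,e_k'\in E_\lambda$. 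Since $E_\lambda\cap\mathfrak{z}$ is central, it can in fact be nonzero only when both lie in $W$. Setting
\begin{equation*}
S=\sum_{a,b,c=1}^d\langle[w_a,w_b],w_c\rangle^2 ,
\end{equation*}
summing \eqref{momentmap} over the $w_k$ gives
\begin{equation*}
\lambda d=-\tfrac12\sum_{k}\|\operatorname{ad}_{w_k}\|^2+\tfrac14 S .
\end{equation*}
Because $S$ is a partial sum of $\sum_k\|\operatorname{ad}_{w_k}\|^2$, this yields
\begin{equation*}
\lambda d\le-\tfrac14\sum_k\|\operatorname{ad}_{w_k}\|^2 .
\end{equation*}
The right-hand side is strictly negative: otherwise every $w_k$ would be central, which is impossible for nonzero vectors of $\mathfrak{z}^\perp$. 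So $\lambda<0$.

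The main obstacle is exactly this bookkeeping in (ii). A single eigenvector $X$ does not obviously give a sign, because the positive term $\tfrac14\sum\langle[e_i,e_k],X\rangle^2$ need not be dominated pointwise. The trick is to trace over $W$ and use the quasicentroid structure to confine all positive contributions to brackets inside $W$, where they are controlled by the negative term.

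The consequences then follow directly. No eigenvalue can be both positive and negative, so each $E_{\lambda_j}$ lies entirely in $\mathfrak{z}$ or in $\mathfrak{z}^\perp$. Zero is never an eigenvalue, so $\operatorname{Ric}$ is non-singular. The stated orthogonal decompositions of $\mathfrak{z}^\perp$ and $\mathfrak{z}$ follow after reordering. Finally, $1\le r$ because $\mathfrak{n}$ is non-abelian, so $\mathfrak{z}^\perp\ne0$. And $r<s$ because a nilpotent algebra has $\mathfrak{z}\neq0$.
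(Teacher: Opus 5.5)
Your proposal is correct and follows the paper's proof essentially step by step. For (i) you use the central-vector form of the Ricci formula together with the no-abelian-factor hypothesis, and for (ii) you sum the Ricci formula over an orthonormal basis of $E_{\lambda}\cap\mathfrak{z}^\perp$, using Proposition~\ref{prop:qc_diagonalizable_iff} to confine the positive contributions to brackets inside that block. The only minor difference is that you bound $S$ by $\sum_k\|\operatorname{ad}_{w_k}\|^2$, whereas the paper uses the re-indexing identity to get the exact expression $d\lambda_j=-\tfrac14 S-\tfrac12\sum\langle[a_i,a_p],z_\alpha\rangle^2$; both yield strict negativity.
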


\begin{proof}
Since $\mathfrak{n}$ is nilpotent, its Killing form $B$ vanishes and it is unimodular, so the mean curvature vector $H$ is zero. Thus, formula \eqref{ricci} reduces to $\operatorname{Ric} = R$ and for any orthonormal basis $\{e_1, \dots, e_n\}$ of $(\mathfrak{n}, \langle \cdot, \cdot \rangle)$ and any $X \in \mathfrak{n}$, we have
\begin{equation}\label{eq:ricci_formula}
    \langle \operatorname{Ric} X, X \rangle = -\frac{1}{2} \sum_{p,q=1}^n \langle [X, e_p], e_q \rangle^2 + \frac{1}{4} \sum_{p,q=1}^n \langle [e_p, e_q], X \rangle^2.
\end{equation}
As noted above, each eigenspace decomposes orthogonally as $E_{\lambda_j} = \mathfrak{a}_j \oplus \mathfrak{z}_j$, where $\mathfrak{a}_j = E_{\lambda_j} \cap \mathfrak{z}^\perp$ and $\mathfrak{z}_j = E_{\lambda_j} \cap
\mathfrak{z}$.

The positivity of the Ricci operator on the center, as stated in part (i), is a well-known fact for nilpotent Lie algebras without abelian factors. We briefly recall the argument for completeness. Suppose that $E_{\lambda_j} \cap \mathfrak{z} \neq \{0\}$ and choose a unit vector $Z \in E_{\lambda_j} \cap \mathfrak{z}$. Since $[Z, \mathfrak{n}] = \{0\}$, applying formula \eqref{eq:ricci_formula} to $Z$ with respect to an orthonormal basis $\{e_1, \dots, e_n\}$ of $\mathfrak{n}$ yields
\begin{equation*}
    \lambda_j = \langle \operatorname{Ric} Z, Z \rangle = \frac{1}{4} \sum_{p,q=1}^n \langle [e_p, e_q], Z \rangle^2 \ge 0.
\end{equation*}
Because $\mathfrak{n}$ has no abelian factor, its center is entirely contained in the commutator ideal, meaning $\mathfrak{z} \subseteq [\mathfrak{n}, \mathfrak{n}]$. Consequently, the non-zero vector $Z$ cannot be orthogonal to $[\mathfrak{n}, \mathfrak{n}]$, which forces at least one term in the sum to be strictly positive. This establishes that $\lambda_j > 0$.

To prove (ii), suppose that $d = \dim \mathfrak{a}_j \ge 1$. Since the eigenspaces of $\operatorname{Ric}$ are mutually orthogonal, we have the orthogonal direct sum $\mathfrak{n} = \mathfrak{a}_1 \oplus \dots \oplus \mathfrak{a}_s \oplus \mathfrak{z}$. Choose an orthonormal basis $\{e_1, \dots, e_n\}$ of $\mathfrak{n}$ adapted to this decomposition, and denote the basis elements spanning $\mathfrak{a}_j$ and $\mathfrak{z}$ by $\{a_1, \dots, a_d\}$ and $\{z_1, \dots, z_m\}$, respectively. By Proposition \ref{prop:qc_diagonalizable_iff}, $[E_{\lambda_j}, E_{\lambda_k}] = \{0\}$ for all $k \neq j$, and $[E_{\lambda_k}, E_{\lambda_k}] \subseteq E_{\lambda_k} + \mathfrak{z} = \mathfrak{a}_k \oplus \mathfrak{z}$ for every $k$. In particular, for any fixed $a_i \in \mathfrak{a}_j$:
\begin{enumerate}
    \item[(a)] The bracket $[a_i, e_p]$ vanishes for all $e_p \notin \mathfrak{a}_j$, and for $a_p \in \mathfrak{a}_j$ we have $[a_i, a_p] \in \mathfrak{a}_j \oplus \mathfrak{z}$.
    \item[(b)] If $e_p$ and $e_q$ belong to different eigenspaces, or if either belongs to $\mathfrak{z}$, their bracket is zero. Thus $[e_p, e_q]$ can only be non-zero when both $e_p, e_q \in \mathfrak{a}_k$ for some $k$, in which case $[e_p, e_q] \in \mathfrak{a}_k \oplus \mathfrak{z}$. By orthogonality, $\langle [e_p, e_q], a_i \rangle = 0$ unless $k = j$, that is, unless both $e_p, e_q \in \mathfrak{a}_j$.
\end{enumerate}
By (a) and (b), evaluating \eqref{eq:ricci_formula} on $X = a_i$ yields
\begin{equation*}
    \lambda_j = \langle \operatorname{Ric} a_i, a_i \rangle = -\frac{1}{2} \sum_{p,q=1}^d \langle [a_i, a_p], a_q \rangle^2 - \frac{1}{2} \sum_{p=1}^d \sum_{\alpha=1}^m \langle [a_i, a_p], z_\alpha \rangle^2 + \frac{1}{4} \sum_{p,q=1}^d \langle [a_p, a_q], a_i \rangle^2.
\end{equation*}
Summing this equation over $i = 1, \dots, d$, and using the fact that
\begin{equation*}
    \sum_{i,p,q=1}^d \langle [a_i, a_p], a_q \rangle^2 = \sum_{i,p,q=1}^d \langle [a_p, a_q], a_i \rangle^2,
\end{equation*}
we obtain
\begin{equation*}
    d\lambda_j = -\frac{1}{4} \sum_{i,p,q=1}^d \langle [a_i, a_p], a_q \rangle^2 - \frac{1}{2} \sum_{i,p=1}^d \sum_{\alpha=1}^m \langle [a_i, a_p], z_\alpha \rangle^2 \le 0.
\end{equation*}
If $d\lambda_j = 0$, both sums vanish, which implies $[a_i, a_p] = 0$ for all $1 \le i, p \le d$. Since $[a_i, E_{\lambda_k}] = \{0\}$ for all $k \neq j$, this gives $[a_i, \mathfrak{n}] = \{0\}$, so $a_i \in \mathfrak{z}$, contradicting $a_i \in \mathfrak{z}^\perp$. Therefore $d\lambda_j < 0$, which proves $\lambda_j < 0$.

Finally, parts (i) and (ii) immediately imply that we cannot have both $\mathfrak{z}_j \neq \{0\}$ and $\mathfrak{a}_j \neq \{0\}$ for the same eigenvalue $\lambda_j$. Hence either $E_{\lambda_j} \subseteq \mathfrak{z}$ or $E_{\lambda_j} \subseteq \mathfrak{z}^\perp$, completing the proof.
\end{proof}

We now show that each ideal $\mathfrak{i}_j = E_{\lambda_j} \oplus \mathfrak{z}$ associated with a negative eigenvalue $\lambda_j$ inherits the quasicentroid condition when endowed with the restricted inner product.

\begin{theorem}\label{thm:ricci_restriction}
Let $(\mathfrak{n}, \langle \cdot, \cdot \rangle)$ be a nilpotent Riemannian Lie algebra with no abelian factor, such that $\operatorname{Ric} \in \operatorname{QC}(\mathfrak{n})$. Following the notation of Lemma \ref{lemma:ricci_eigenspace_splitting}, for each $1 \le j \le r$, consider the ideal $\mathfrak{i}_j = E_{\lambda_j} \oplus \mathfrak{z}$ endowed with the restricted inner product $\langle \cdot, \cdot \rangle_j = \langle \cdot, \cdot \rangle|_{\mathfrak{i}_j \times \mathfrak{i}_j}$. Then:
\begin{enumerate}
    \item[(i)] The center of $\mathfrak{i}_j$ is $\mathfrak{z}(\mathfrak{i}_j) = \mathfrak{z}$, and $\mathfrak{n}$ is the central product of the ideals $\mathfrak{i}_1, \dots, \mathfrak{i}_r$. In particular, $\mathfrak{n}$ is centrally decomposable if $r \ge 2$ (that is, if $\operatorname{Ric}$ has at least two distinct negative eigenvalues).
    \item[(ii)] The Ricci operator $\operatorname{Ric}_{\mathfrak{i}_j}$ of $(\mathfrak{i}_j, \langle \cdot, \cdot \rangle_j)$ satisfies
    \begin{equation*}
        \operatorname{Ric}_{\mathfrak{i}_j} = \lambda_j \operatorname{Id}_{\mathfrak{i}_j} + \varphi_j \in \mathbb{R}\operatorname{Id}_{\mathfrak{i}_j} \oplus L(\mathfrak{i}_j, \mathfrak{z}) \subseteq \operatorname{QC}(\mathfrak{i}_j).
    \end{equation*}
    In particular, the restricted metric on $\mathfrak{i}_j$ belongs to Gray's class $\mathcal{A}$. Its Ricci operator lies in $\operatorname{QC}(\mathfrak{i}_j)$ and has $\lambda_j$ as its unique negative eigenvalue on $\mathfrak{z}^\perp \cap \mathfrak{i}_j = E_{\lambda_j}$.
\end{enumerate}
\end{theorem}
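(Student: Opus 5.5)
The plan is to derive (i) almost entirely from Lemma \ref{lemma:ricci_eigenspace_splitting} and Proposition \ref{prop:qc_diagonalizable_iff}, and to prove (ii) by comparing the polarized Ricci formula \eqref{eq:ricci_formula} on $\mathfrak{n}$ and on $\mathfrak{i}_j$ in a common adapted orthonormal basis.

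\textbf{Part (i).} By Lemma \ref{lemma:ricci_eigenspace_splitting}, every eigenspace with positive eigenvalue lies in $\mathfrak{z}$, and $\mathfrak{z}^\perp=\bigoplus_{j\le r}E_{\lambda_j}$. Proposition \ref{prop:qc_diagonalizable_iff}(iii) then gives three facts:
\begin{itemize}
\item each $\mathfrak{i}_j=E_{\lambda_j}\oplus\mathfrak{z}$ with $j\le r$ is an ideal;
\item $[\mathfrak{i}_j,\mathfrak{i}_k]=\{0\}$ for $j\neq k$;
\item $\mathfrak{n}=\sum_{j\le r}\mathfrak{i}_j$. For $k>r$ we have $\mathfrak{i}_k=\mathfrak{z}$, so these terms add nothing.
\end{itemize}
For the center, clearly $\mathfrak{z}\subseteq\mathfrak{z}(\mathfrak{i}_j)$. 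Conversely, take $X=X_j+Z\in\mathfrak{z}(\mathfrak{i}_j)$ with $X_j\in E_{\lambda_j}$ and $Z\in\mathfrak{z}$. Then $X_j$ commutes with $E_{\lambda_j}$ by assumption and with every $E_{\lambda_k}$, $k\neq j$, by Proposition \ref{prop:qc_diagonalizable_iff}(ii). Hence $X_j\in\mathfrak{z}\cap E_{\lambda_j}=\{0\}$. If $r\ge 2$, set $\mathfrak{j}_1=\mathfrak{i}_1$ and $\mathfrak{j}_2=\sum_{k=2}^r\mathfrak{i}_k$. Both strictly contain $\mathfrak{z}$, since the $E_{\lambda_k}$ are nonzero and orthogonal to $\mathfrak{z}$, and both are proper. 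This shows $\mathfrak{n}$ is centrally decomposable.

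\textbf{Part (ii).} Since $\mathfrak{i}_j$ is a nilpotent Lie algebra, \eqref{ricci} reduces to $\operatorname{Ric}_{\mathfrak{i}_j}=R_{\mathfrak{i}_j}$. I will use the polarized version of \eqref{eq:ricci_formula}:
\[
\langle \operatorname{Ric}X,Y\rangle=-\tfrac12\sum_{p,q}\langle[X,e_p],e_q\rangle\langle[Y,e_p],e_q\rangle+\tfrac14\sum_{p,q}\langle[e_p,e_q],X\rangle\langle[e_p,e_q],Y\rangle .
\]
Fix an orthonormal basis of $\mathfrak{n}$ adapted to $\mathfrak{n}=E_{\lambda_1}\oplus\dots\oplus E_{\lambda_r}\oplus\mathfrak{z}$. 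Its elements in $E_{\lambda_j}\oplus\mathfrak{z}$ form an orthonormal basis of $\mathfrak{i}_j$. Using facts (a) and (b) from the proof of Lemma \ref{lemma:ricci_eigenspace_splitting}, I will compare the two formulas term by term for each type of pair $(X,Y)$.

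\emph{Case $X,Y\in E_{\lambda_j}$.} The first sum only involves $e_p\in E_{\lambda_j}$ and $e_q\in E_{\lambda_j}\oplus\mathfrak{z}$. The second sum only involves $e_p,e_q\in E_{\lambda_j}$, because brackets of basis vectors in $E_{\lambda_k}$, $k\neq j$, lie in $E_{\lambda_k}\oplus\mathfrak{z}$, which is orthogonal to $E_{\lambda_j}$. So $\langle\operatorname{Ric}_{\mathfrak{i}_j}X,Y\rangle=\langle\operatorname{Ric}X,Y\rangle=\lambda_j\langle X,Y\rangle$.

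\emph{Case $X\in E_{\lambda_j}$, $Z\in\mathfrak{z}$.} The first sum vanishes because $Z$ is central. In the second sum, only pairs $e_p,e_q\in E_{\lambda_j}$ can contribute, for the same orthogonality reason. Hence the mixed coefficient computed in $\mathfrak{i}_j$ equals the one computed in $\mathfrak{n}$. The latter is $\langle\operatorname{Ric}X,Z\rangle=\lambda_j\langle X,Z\rangle=0$.

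\emph{Conclusion.} $\operatorname{Ric}_{\mathfrak{i}_j}$ preserves both $E_{\lambda_j}$ and $\mathfrak{z}$, acts as $\lambda_j\operatorname{Id}$ on $E_{\lambda_j}$, and acts as some symmetric operator $S$ on $\mathfrak{z}$. Write $P_{\mathfrak{z}}$ for the orthogonal projection onto $\mathfrak{z}$ and set $\varphi_j=(S-\lambda_j\operatorname{Id}_{\mathfrak{z}})\circ P_{\mathfrak{z}}\in L(\mathfrak{i}_j,\mathfrak{z})$. Then $\operatorname{Ric}_{\mathfrak{i}_j}=\lambda_j\operatorname{Id}_{\mathfrak{i}_j}+\varphi_j$. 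Since $\mathfrak{z}=\mathfrak{z}(\mathfrak{i}_j)$ by part (i), the inclusion \eqref{eq:minimal_inclusion} gives $\operatorname{Ric}_{\mathfrak{i}_j}\in\operatorname{QC}(\mathfrak{i}_j)$. Condition \eqref{eq5} implies \eqref{grayA_condition}, so the restricted metric belongs to Gray's class $\mathcal{A}$. Finally, $\operatorname{Ric}_{\mathfrak{i}_j}$ equals $\lambda_j$ on $E_{\lambda_j}=\mathfrak{z}^\perp\cap\mathfrak{i}_j$, which gives the last assertion.

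\textbf{Main obstacle.} The delicate step is the mixed case in part (ii), that is, showing $\operatorname{Ric}_{\mathfrak{i}_j}$ has no component from $E_{\lambda_j}$ into $\mathfrak{z}$. The argument relies on the contributions from other eigenspaces to $\langle[e_p,e_q],X\rangle$ vanishing, so the $\mathfrak{i}_j$-computation reproduces the $\mathfrak{n}$-computation exactly. On $\mathfrak{z}$ itself the two Ricci operators genuinely differ, and I will make no claim there beyond symmetry.
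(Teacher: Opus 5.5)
Your proposal is correct and follows essentially the same route as the paper: part (i) comes from Lemma \ref{lemma:ricci_eigenspace_splitting} and Proposition \ref{prop:qc_diagonalizable_iff}. For part (ii), you show that every term of the polarized Ricci formula involving basis vectors outside $\mathfrak{i}_j$ vanishes, so that $\langle \operatorname{Ric}_{\mathfrak{i}_j}X,Y\rangle_j=\lambda_j\langle X,Y\rangle$ for $X\in E_{\lambda_j}$ and $Y\in\mathfrak{i}_j$, and then you use self-adjointness to handle $\mathfrak{z}$; your split into the cases $Y\in E_{\lambda_j}$ and $Y\in\mathfrak{z}$ is just the paper's single computation with $Y\in\mathfrak{i}_j$ written out by linearity.
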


\begin{proof}
To prove (i), clearly $\mathfrak{z} \subseteq \mathfrak{z}(\mathfrak{i}_j)$. Conversely, if $X + Z \in \mathfrak{z}(\mathfrak{i}_j)$ with $X \in E_{\lambda_j}$ and $Z \in \mathfrak{z}$, then $[X, E_{\lambda_j}] = \{0\}$. Since $[E_{\lambda_j}, E_{\lambda_k}] = \{0\}$ for all $k \neq j$, it follows that $[X, \mathfrak{n}] = \{0\}$, so $X \in \mathfrak{z} \cap E_{\lambda_j} = \{0\}$. Thus $\mathfrak{z}(\mathfrak{i}_j) = \mathfrak{z}$.

By Proposition \ref{prop:qc_diagonalizable_iff}, $\mathfrak{n}$ decomposes as the central product of the ideals $E_{\lambda_k} + \mathfrak{z}$ over all eigenvalues $1 \le k \le s$. However, for any positive eigenvalue ($k > r$), Lemma \ref{lemma:ricci_eigenspace_splitting} gives $E_{\lambda_k} \subseteq \mathfrak{z}$, meaning $E_{\lambda_k} + \mathfrak{z} = \mathfrak{z}$. These trivial factors do not contribute to the central product, so $\mathfrak{n}$ is already the central product of just $\mathfrak{i}_1, \dots, \mathfrak{i}_r$. Consequently, $\mathfrak{n}$ is centrally decomposable if there are at least two such ideals, meaning $r \ge 2$.

To prove (ii), fix $1 \le j \le r$. By Lemma \ref{lemma:ricci_eigenspace_splitting}, we have the orthogonal direct sum $\mathfrak{n} = \mathfrak{i}_j \oplus C_j$, where
\begin{equation*}
    C_j = \bigoplus_{\substack{k=1 \\ k \neq j}}^r E_{\lambda_k}.
\end{equation*}
By Proposition \ref{prop:qc_diagonalizable_iff}, $[E_{\lambda_k}, E_{\lambda_l}] = \{0\}$ for all $k \neq l$ and $[E_{\lambda_k}, E_{\lambda_k}] \subseteq E_{\lambda_k} \oplus \mathfrak{z}$ for every $k$. Since $\mathfrak{z}$ is the center of $\mathfrak{n}$, this implies that
\begin{equation*}
    [\mathfrak{i}_j, \mathfrak{i}_j] \subseteq \mathfrak{i}_j, \qquad [\mathfrak{i}_j, C_j] = \{0\}, \qquad \text{and} \qquad [C_j, C_j] \subseteq C_j \oplus \mathfrak{z}.
\end{equation*}
Let $\mathcal{B}_j$ and $\mathcal{B}_C$ be orthonormal bases of $\mathfrak{i}_j$ and $C_j$, respectively, so that the disjoint union $\mathcal{B} = \mathcal{B}_j \cup \mathcal{B}_C$ is an orthonormal basis of $\mathfrak{n}$. Since $\operatorname{Ric}$ is self-adjoint, polarizing \eqref{eq:ricci_formula} yields, for all $X, Y \in \mathfrak{n}$,
\begin{equation*}
    \langle \operatorname{Ric} X, Y \rangle = -\frac{1}{2} \sum_{u,v \in \mathcal{B}} \langle [X, u], v \rangle \langle [Y, u], v \rangle + \frac{1}{4} \sum_{u,v \in \mathcal{B}} \langle [u, v], X \rangle \langle [u, v], Y \rangle.
\end{equation*}
Now take $X \in E_{\lambda_j}$ and $Y \in \mathfrak{i}_j$. We examine the terms in the sums above when at least one of the basis vectors $u, v \in \mathcal{B}$ belongs to $\mathcal{B}_C$:
\begin{enumerate}
    \item[(a)] In the first sum, if $u \in \mathcal{B}_C$, then $[X, u] = 0$ because $X \in \mathfrak{i}_j$ and $[\mathfrak{i}_j, C_j] = \{0\}$. If $u \in \mathcal{B}_j$ and $v \in \mathcal{B}_C$, then $[X, u] \in [\mathfrak{i}_j, \mathfrak{i}_j] \subseteq \mathfrak{i}_j$, which is orthogonal to $v \in C_j$, so $\langle [X, u], v \rangle = 0$.
    \item[(b)] In the second sum, if one of $u, v$ lies in $\mathcal{B}_j$ and the other in $\mathcal{B}_C$, then $[u, v] = 0$ since $[\mathfrak{i}_j, C_j] = \{0\}$. If both $u, v \in \mathcal{B}_C$, then $[u, v] \in [C_j, C_j] \subseteq C_j \oplus \mathfrak{z}$. Since $X \in E_{\lambda_j}$ is orthogonal to both $C_j$ and $\mathfrak{z}$, we obtain $\langle [u, v], X \rangle = 0$.
\end{enumerate}
It follows from (a) and (b) that all terms with $u \in \mathcal{B}_C$ or $v \in \mathcal{B}_C$ vanish. Thus, for $X \in E_{\lambda_j}$ and $Y \in \mathfrak{i}_j$, the sums over $\mathcal{B}$ reduce to sums over $\mathcal{B}_j$:
\begin{equation*}
    \langle \operatorname{Ric} X, Y \rangle = -\frac{1}{2} \sum_{u,v \in \mathcal{B}_j} \langle [X, u], v \rangle \langle [Y, u], v \rangle + \frac{1}{4} \sum_{u,v \in \mathcal{B}_j} \langle [u, v], X \rangle \langle [u, v], Y \rangle.
\end{equation*}
Since $\mathfrak{i}_j$ is nilpotent and $\mathcal{B}_j$ is an orthonormal basis of $(\mathfrak{i}_j, \langle \cdot, \cdot \rangle_j)$, the right-hand side is precisely $\langle \operatorname{Ric}_{\mathfrak{i}_j} X, Y \rangle_j$. Moreover, since $X \in E_{\lambda_j}$, we have $\operatorname{Ric} X = \lambda_j X$, and therefore
\begin{equation*}
    \langle \operatorname{Ric}_{\mathfrak{i}_j} X, Y \rangle_j = \langle \operatorname{Ric} X, Y \rangle = \lambda_j \langle X, Y \rangle = \lambda_j \langle X, Y \rangle_j
\end{equation*}
for all $Y \in \mathfrak{i}_j$. This shows that $\operatorname{Ric}_{\mathfrak{i}_j} X = \lambda_j X$ for all $X \in E_{\lambda_j}$.

Finally, since $\operatorname{Ric}_{\mathfrak{i}_j}$ is self-adjoint with respect to $\langle \cdot, \cdot \rangle_j$ and preserves $E_{\lambda_j}$, it also preserves its orthogonal complement in $\mathfrak{i}_j$, namely $\mathfrak{z}$. Given any $V \in \mathfrak{i}_j$, write $V = X + Z$ with $X \in E_{\lambda_j}$ and $Z \in \mathfrak{z}$. Then
\begin{equation*}
    \operatorname{Ric}_{\mathfrak{i}_j}(V) = \operatorname{Ric}_{\mathfrak{i}_j}(X) + \operatorname{Ric}_{\mathfrak{i}_j}(Z) = \lambda_j X + \operatorname{Ric}_{\mathfrak{i}_j}(Z) = \lambda_j V + \varphi_j(V),
\end{equation*}
where $\varphi_j(V) = \operatorname{Ric}_{\mathfrak{i}_j}(Z) - \lambda_j Z \in \mathfrak{z}$. Since $\mathfrak{z}(\mathfrak{i}_j) = \mathfrak{z}$ by (i), we have $\varphi_j \in L(\mathfrak{i}_j, \mathfrak{z}(\mathfrak{i}_j) )$, and hence
\begin{equation*}
    \operatorname{Ric}_{\mathfrak{i}_j} = \lambda_j \operatorname{Id}_{\mathfrak{i}_j} + \varphi_j \in \mathbb{R}\operatorname{Id}_{\mathfrak{i}_j} \oplus L(\mathfrak{i}_j, \mathfrak{z}(\mathfrak{i}_j) ) \subseteq \operatorname{QC}(\mathfrak{i}_j).
\end{equation*}
This completes the proof.
\end{proof}

\begin{remark}\label{rem:central_de_rham}
Theorem \ref{thm:ricci_restriction} can be viewed as a central analogue of the de Rham decomposition in our setting. Because $\mathfrak{z} \subseteq [\mathfrak{n}, \mathfrak{n}]$, the Lie algebra $\mathfrak{n}$ cannot split as an orthogonal direct sum of non-trivial ideals unless it is already a direct product. Instead, the condition $\operatorname{Ric} \in \operatorname{QC}(\mathfrak{n})$ decomposes $\mathfrak{n}$ into a central product of ideals $\mathfrak{i}_j = E_{\lambda_j} \oplus \mathfrak{z}$ sharing the center $\mathfrak{z}$, such that the restricted metric on each piece preserves the property $\operatorname{Ric}_{\mathfrak{i}_j} \in \operatorname{QC}(\mathfrak{i}_j)$.

As discussed in the introduction, non-abelian nilpotent Lie groups do not admit left-invariant Einstein metrics. Consequently, a metric for which $\operatorname{Ric}$ has a single negative eigenvalue on $\mathfrak{z}^\perp$ (meaning $\operatorname{Ric}|_{\mathfrak{z}^\perp} = \lambda \operatorname{Id}_{\mathfrak{z}^\perp}$) may be regarded as one of the closest available analogs to an Einstein metric in this setting. Theorem \ref{thm:ricci_restriction} demonstrates how the assumption $\operatorname{Ric} \in \operatorname{QC}(\mathfrak{n})$ naturally reduces the general case to these fundamental building blocks.
\end{remark}

\begin{remark}\label{rem:ricci_center_sum}
It is worth emphasizing the relationship between the global Ricci operator $\operatorname{Ric}$ of $\mathfrak{n}$ and the restricted operators $\operatorname{Ric}_{\mathfrak{i}_j}$ of the ideals. For vectors in the non-central eigenspaces $E_{\lambda_j}$, the operators coincide, as $\operatorname{Ric} X = \operatorname{Ric}_{\mathfrak{i}_j} X = \lambda_j X$. However, this is not true on the shared center $\mathfrak{z}$. Because the bracket of two vectors in $\mathfrak{i}_k$ can have a non-zero component in $\mathfrak{z}$, each ideal contributes to the global curvature of the center. Evaluating \eqref{eq:ricci_formula} on $\mathfrak{z}$ reveals that the global Ricci operator on the center is precisely the sum of the restricted ones:
\begin{equation*}
    \operatorname{Ric}|_{\mathfrak{z}} = \sum_{k=1}^r \operatorname{Ric}_{\mathfrak{i}_k}|_{\mathfrak{z}}.
\end{equation*}
Consequently, the restricted metric on $\mathfrak{i}_j$ defines a genuinely new geometry on the center, as its Ricci operator loses positive curvature contributions given by the other ideals.
\end{remark}

As a direct consequence of Theorem \ref{thm:ricci_restriction}, the step of nilpotency of $\mathfrak{n}$ is controlled by the steps of nilpotency of the ideals $\mathfrak{i}_1, \dots, \mathfrak{i}_r$.

\begin{corollary}\label{cor:nilpotency_step}
Under the hypotheses and notation of Theorem \ref{thm:ricci_restriction}, for every integer $m \ge 1$ we have
\begin{equation*}
    \mathfrak{n}^m = \sum_{j=1}^r \mathfrak{i}_j^m.
\end{equation*}
In particular, the step of nilpotency of $\mathfrak{n}$ is equal to the maximum of the steps of nilpotency of the ideals $\mathfrak{i}_1, \dots, \mathfrak{i}_r$.
\end{corollary}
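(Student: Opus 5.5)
We argue by induction on $m \ge 1$, using only the bracket relations established in Theorem \ref{thm:ricci_restriction} and Proposition \ref{prop:qc_diagonalizable_iff}: $\mathfrak{n} = \mathfrak{i}_1 + \dots + \mathfrak{i}_r$, each $\mathfrak{i}_j$ is an ideal, and $[\mathfrak{i}_j, \mathfrak{i}_k] = \{0\}$ for $j \neq k$. The inclusion $\sum_j \mathfrak{i}_j^m \subseteq \mathfrak{n}^m$ is immediate for every $m$, since each $\mathfrak{i}_j \subseteq \mathfrak{n}$ and so, by induction, $\mathfrak{i}_j^m = [\mathfrak{i}_j, \mathfrak{i}_j^{m-1}] \subseteq [\mathfrak{n}, \mathfrak{n}^{m-1}] = \mathfrak{n}^m$. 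The real content is therefore the reverse inclusion.

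For the base case $m = 1$ (recall $\mathfrak{n}^1 = [\mathfrak{n}, \mathfrak{n}]$), bilinearity gives $[\mathfrak{n},\mathfrak{n}] = \sum_{j,k} [\mathfrak{i}_j, \mathfrak{i}_k]$. All cross terms with $j \neq k$ vanish, leaving $\sum_j [\mathfrak{i}_j,\mathfrak{i}_j] = \sum_j \mathfrak{i}_j^1$. In the inductive step, assume $\mathfrak{n}^{m-1} = \sum_k \mathfrak{i}_k^{m-1}$. Then
\begin{equation*}
\mathfrak{n}^m = [\mathfrak{n}, \mathfrak{n}^{m-1}] = \sum_{j,k} [\mathfrak{i}_j, \mathfrak{i}_k^{m-1}].
\end{equation*}
For $j \neq k$ we have $\mathfrak{i}_k^{m-1} \subseteq \mathfrak{i}_k$; this holds because the lower central series of $\mathfrak{i}_k$ consists of subspaces of $\mathfrak{i}_k$, with $\mathfrak{i}_k^0 = \mathfrak{i}_k$. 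Hence $[\mathfrak{i}_j, \mathfrak{i}_k^{m-1}] \subseteq [\mathfrak{i}_j, \mathfrak{i}_k] = \{0\}$. Only the diagonal terms $[\mathfrak{i}_j, \mathfrak{i}_j^{m-1}] = \mathfrak{i}_j^m$ survive, which proves the identity.

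For the statement about steps, let $s_j$ be the step of $\mathfrak{i}_j$; each $\mathfrak{i}_j$ is nilpotent as a subalgebra of $\mathfrak{n}$. If $m \ge \max_j s_j$, every $\mathfrak{i}_j^m$ vanishes, so $\mathfrak{n}^m = \{0\}$. If instead $m = \max_j s_j - 1$, then some $\mathfrak{i}_{j_0}^{m} \neq \{0\}$, and since $\mathfrak{i}_{j_0}^m \subseteq \mathfrak{n}^m$ we get $\mathfrak{n}^m \neq \{0\}$. Together these show that the step of $\mathfrak{n}$ equals $\max_j s_j$.

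No step is a genuine obstacle here. The only point needing care is that the cross terms vanish because the lower central series terms of $\mathfrak{i}_k$ stay inside $\mathfrak{i}_k$, and this is immediate from the definition. Nothing from the Riemannian structure is needed beyond the central product decomposition of Theorem \ref{thm:ricci_restriction}(i).
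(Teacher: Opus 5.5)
Your proposal is correct and follows essentially the same route as the paper: induction on $m$ using $\mathfrak{n} = \sum_j \mathfrak{i}_j$ and the vanishing of the cross-brackets $[\mathfrak{i}_j, \mathfrak{i}_k]$ for $j \neq k$, then reading off the step from the fact that $\mathfrak{n}^m = \{0\}$ if and only if $\mathfrak{i}_j^m = \{0\}$ for all $j$. The paper states this induction as immediate, and you simply spell out the details, including the non-vanishing of $\mathfrak{n}^m$ at $m = \max_j s_j - 1$.
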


\begin{proof}
Since $\mathfrak{n} = \sum_{j=1}^r \mathfrak{i}_j$ and the cross-brackets vanish (i.e., $[\mathfrak{i}_j, \mathfrak{i}_k] = \{0\}$ for all $j \neq k$), the identity $\mathfrak{n}^m = \sum_{j=1}^r \mathfrak{i}_j^m$ follows immediately by induction on $m \ge 1$. Therefore, $\mathfrak{n}^m = \{0\}$ if and only if $\mathfrak{i}_j^m = \{0\}$ for all $1 \le j \le r$.
\end{proof}

\subsection{The Quasicentroid Condition when $\dim \mathfrak{z} = 1$}

We now apply Theorem \ref{thm:ricci_restriction} and Corollary \ref{cor:nilpotency_step} to classify nilpotent Riemannian Lie algebras with a one-dimensional center satisfying $\operatorname{Ric} \in \operatorname{QC}(\mathfrak{n})$. Recall that a two-step nilpotent Lie algebra with a one-dimensional center and no abelian factor is isomorphic to an odd-dimensional Heisenberg Lie algebra $\mathfrak{h}_{2m+1}$.

Given a Riemannian Lie algebra $(\mathfrak{n}, \langle \cdot, \cdot \rangle)$, for each $X \in \mathfrak{n}$ we define the skew-symmetric operator $J_X : \mathfrak{n} \to \mathfrak{n}$ by
\begin{equation}\label{eq:def_JX}
    \langle J_X Y, Z \rangle = \langle X, [Y, Z] \rangle \quad \text{for all } Y, Z \in \mathfrak{n}.
\end{equation}
Although in the literature these $J$-operators are typically used in the specific context of two-step nilpotent Lie algebras, where $X$ is taken exclusively from the center $\mathfrak{z}$ and the operators act only on $\mathfrak{z}^\perp$, it is much more convenient for our general analysis to define them globally on the entire algebra $\mathfrak{n}$.

Under this extended definition, note that $J_X = 0$ whenever $X \in [\mathfrak{n}, \mathfrak{n}]^\perp$. Moreover, for every $X \in \mathfrak{n}$, the center $\mathfrak{z}$ is contained in $\ker J_X$. Because $J_X$ is skew-symmetric, this kernel condition immediately forces the image to satisfy $\operatorname{Im} J_X \subseteq \mathfrak{z}^\perp$.

For self-adjoint operators, membership in the quasicentroid admits a natural characterization in terms of the operators $J_X$.

\begin{proposition}\label{lemma:qc_commutes_JX}
Let $(\mathfrak{n}, \langle \cdot, \cdot \rangle)$ be a Riemannian Lie algebra and let $T : \mathfrak{n} \to \mathfrak{n}$ be a self-adjoint linear operator. Then $T \in \operatorname{QC}(\mathfrak{n})$ if and only if $[T, J_X] = 0$ for all $X \in \mathfrak{n}$.
\end{proposition}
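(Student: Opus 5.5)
The plan is to translate the quasicentroid identity \eqref{eq:QC_def} into a statement about the operators $J_X$ by pairing everything with an arbitrary vector $X$, using the defining relation \eqref{eq:def_JX} and the non-degeneracy of the inner product. Since $T$ is self-adjoint and each $J_X$ is skew-symmetric, the commutator $[T,J_X]$ will be compared directly with the bilinear expression $[TY,Z]-[Y,TZ]$.

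The key computation goes as follows. Fix $X,Y,Z\in\mathfrak{n}$. By \eqref{eq:def_JX},
\begin{equation*}
\langle X,[TY,Z]\rangle=\langle J_X TY, Z\rangle,
\end{equation*}
and
\begin{equation*}
\langle X,[Y,TZ]\rangle=\langle J_X Y, TZ\rangle=\langle T J_X Y, Z\rangle,
\end{equation*}
where the last step uses only that $T$ is self-adjoint. Subtracting the two identities gives
\begin{equation*}
\langle X,[TY,Z]-[Y,TZ]\rangle=\langle (J_X T - T J_X)Y, Z\rangle=-\langle [T,J_X]Y,Z\rangle
\end{equation*}
for all $X,Y,Z$.

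Both implications then follow from non-degeneracy. If $T\in\operatorname{QC}(\mathfrak{n})$, the left-hand side vanishes for all $X,Y,Z$. Hence $\langle[T,J_X]Y,Z\rangle=0$ for all $Y,Z$, which gives $[T,J_X]=0$ for every $X$. Conversely, if $[T,J_X]=0$ for all $X$, then $\langle X,[TY,Z]-[Y,TZ]\rangle=0$ for all $X$. This forces $[TY,Z]=[Y,TZ]$ for all $Y,Z$, which is exactly \eqref{eq:QC_def}.

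There is no substantial obstacle. The only point requiring care is moving $T$ across the inner product via self-adjointness in the second pairing; this is the single place where the hypothesis on $T$ is used. The skew-symmetry of $J_X$ is not needed for this argument.
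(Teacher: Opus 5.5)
Your proposal is correct and follows essentially the same route as the paper: pair $[TY,Z]-[Y,TZ]$ with $X$, rewrite both terms via the definition of $J_X$ and the self-adjointness of $T$ to obtain $\langle (J_X T - T J_X)Y, Z\rangle$, and conclude both directions by non-degeneracy. Your observation that the skew-symmetry of $J_X$ plays no role is also accurate.
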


\begin{proof}
Since $T$ is self-adjoint, for all $X, Y, Z \in \mathfrak{n}$ we have
\begin{equation*}
    \langle J_X(TY), Z \rangle = \langle X, [TY, Z] \rangle \quad \text{and} \quad \langle T(J_X Y), Z \rangle = \langle J_X Y, TZ \rangle = \langle X, [Y, TZ] \rangle.
\end{equation*}
Consequently, $\langle (J_X T - T J_X)Y, Z \rangle = \langle X, [TY, Z] - [Y, TZ] \rangle$ for all $X, Y, Z \in \mathfrak{n}$. By non-degeneracy of the inner product, $J_X T = T J_X$ for all $X \in \mathfrak{n}$ if and only if $[TY, Z] = [Y, TZ]$ for all $Y, Z \in \mathfrak{n}$, that is, $T \in \operatorname{QC}(\mathfrak{n})$.
\end{proof}

The proof of the following theorem proceeds as follows. We first apply Theorem \ref{thm:ricci_restriction} to reduce the problem to the case where $\operatorname{Ric}$ has exactly one negative eigenvalue on $\mathfrak{z}^\perp$. Assuming by contradiction that the step of nilpotency is $k \ge 3$, we select a unit vector $u \in [\mathfrak{n}, \mathfrak{n}] \cap \mathfrak{z}^\perp$ belonging to the \textit{second center} of $\mathfrak{n}$, and compare its Ricci curvature with that of $v = J_z u$, where $z$ is a unit vector spanning $\mathfrak{z}$. Using the Jacobi identity, we show that $v$ is non-zero and orthogonal to $[\mathfrak{n}, \mathfrak{n}]$ (and hence also to $\mathfrak{z}$). This orthogonality significantly simplifies the Ricci formula in the direction of $v$, ultimately forcing $J_u = 0$, which contradicts the assumption that $u \in [\mathfrak{n}, \mathfrak{n}]$.

\begin{theorem}\label{thm:center_one_heisenberg}
Let $(\mathfrak{n}, \langle \cdot, \cdot \rangle)$ be a nilpotent Riemannian Lie algebra with no abelian factor and $\dim \mathfrak{z} = 1$. If $\operatorname{Ric} \in \operatorname{QC}(\mathfrak{n})$, then $\mathfrak{n}$ is two-step nilpotent, and therefore $\mathfrak{n}$ is isomorphic to an odd-dimensional Heisenberg Lie algebra.
\end{theorem}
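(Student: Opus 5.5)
The plan is to follow the outline given just before the statement. First I reduce to a single negative eigenvalue. By Theorem \ref{thm:ricci_restriction}, $\mathfrak{n}$ is the central product of the ideals $\mathfrak{i}_j = E_{\lambda_j}\oplus\mathfrak{z}$. Each $\mathfrak{i}_j$ has center $\mathfrak{z}$ (one-dimensional), has no abelian factor, and satisfies $\operatorname{Ric}_{\mathfrak{i}_j}=\lambda_j\operatorname{Id}+\varphi_j\in\operatorname{QC}(\mathfrak{i}_j)$. By Corollary \ref{cor:nilpotency_step}, it suffices to show that each $\mathfrak{i}_j$ is two-step nilpotent. So from now on I assume $\operatorname{Ric}|_{\mathfrak{z}^\perp}=\lambda\operatorname{Id}$ with $\lambda<0$. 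Since $\mathfrak{z}=\mathbb{R}z$ with $z$ a unit vector, Lemma \ref{lemma:ricci_eigenspace_splitting} gives $\operatorname{Ric}z=\mu z$ with $\mu>0$.

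Next I suppose the step is $k\ge 3$ and aim for a contradiction. Then $\mathfrak{n}^{k-2}$ is a nonzero subspace of $[\mathfrak{n},\mathfrak{n}]$ lying in the second center $\mathfrak{z}_2$ and not contained in $\mathfrak{z}$. Projecting an element of it orthogonally onto $\mathfrak{z}^\perp$ keeps it in $[\mathfrak{n},\mathfrak{n}]\cap\mathfrak{z}_2$, because $\mathfrak{z}\subseteq[\mathfrak{n},\mathfrak{n}]$. This gives a unit vector $u\in[\mathfrak{n},\mathfrak{n}]\cap\mathfrak{z}_2\cap\mathfrak{z}^\perp$. Since $u\in\mathfrak{z}_2$, we have $[u,Y]=\langle J_zu,Y\rangle z$ for all $Y$, so $\operatorname{ad}_u$ is rank one. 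Set $v=J_zu$. Then $v\neq0$, since otherwise $u$ would be central. By Proposition \ref{lemma:qc_commutes_JX}, $\operatorname{Ric}$ commutes with $J_z$, so $\operatorname{Ric}v=\lambda v$.

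The key claim is that $v\perp[\mathfrak{n},\mathfrak{n}]$. For $Y,W\in\mathfrak{n}$ we have $\langle v,[Y,W]\rangle z=[u,[Y,W]]$. By Jacobi this equals $[[u,Y],W]+[Y,[u,W]]$, and both terms are brackets with multiples of $z$, hence zero. So $v\in[\mathfrak{n},\mathfrak{n}]^\perp$, and in particular $J_v=0$.

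Then I evaluate \eqref{eq:ricci_formula} at $v$. Since $J_v=0$, the positive term vanishes, and $\lambda|v|^2=-\tfrac12\|\operatorname{ad}_v\|^2$. Evaluating at $u$ gives $\lambda=-\tfrac12\|\operatorname{ad}_u\|^2+\tfrac14\|J_u\|^2$ (up to the normalization of $\sum\langle[e_p,e_q],u\rangle^2=\|J_u\|^2$). Since $\operatorname{ad}_u=\langle v,\cdot\rangle z$, we get $\|\operatorname{ad}_u\|^2=|v|^2$, so $\lambda=-\tfrac12|v|^2+\tfrac14\|J_u\|^2$. To compare these, I need $\|\operatorname{ad}_v\|^2$ in terms of $|v|^2$. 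The inequality $\|\operatorname{ad}_v\|^2\ge\langle \operatorname{ad}_v u,z\rangle^2/\dots$ gives only a lower bound, $\|\operatorname{ad}_v\|^2\ge|\langle[v,u],z\rangle|^2=|v|^4$ after normalizing by $u$. This yields $\lambda\le-\tfrac12|v|^2$, which goes in the wrong direction for a contradiction.

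The \emph{main obstacle} is therefore to extract an exact identity. My first attempt will use the quasicentroid condition more fully, namely $[\operatorname{Ric}X,Y]=[X,\operatorname{Ric}Y]$ applied with $Y$ central-related vectors. From $\operatorname{Ric}z=\mu z$ and $\operatorname{Ric}=\lambda$ on $\mathfrak{z}^\perp$, the condition gives $(\lambda-\mu)\langle z,[X,Y]\rangle$-type relations. Writing $[X,Y]=\langle J_zX,Y\rangle z+\text{(}\mathfrak{z}^\perp\text{ part)}$, the identity $\lambda[X,Y]=[\operatorname{Ric}X,Y]$ for $X,Y\in\mathfrak{z}^\perp$ is automatic, so QC gives nothing beyond $J$-commutation. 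Instead, I would compare the traces $\operatorname{tr}(\operatorname{Ric}J_z^2)$ computed in two ways: as $\lambda\operatorname{tr}J_z^2$, and via the Ricci formula summed over the image of $J_z$. Since $\operatorname{Im}J_z\perp[\mathfrak{n},\mathfrak{n}]$ by the Jacobi argument above (applied to all of $\mathfrak{z}_2$), the positive terms drop out there. Summing $\lambda=-\tfrac12\|\operatorname{ad}_w\|^2/|w|^2$ over an orthonormal basis of $\operatorname{Im}J_z$ and comparing with the $\mathfrak{z}_2\cap\mathfrak{z}^\perp$ contributions should force $\sum\|J_u\|^2=0$. That gives $J_u=0$, contradicting $u\in[\mathfrak{n},\mathfrak{n}]\setminus\{0\}$. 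I expect to have to balance these curvature sums carefully, and this is where I expect the difficulty. Once two-step nilpotency holds, the Heisenberg conclusion follows from the recalled classification.
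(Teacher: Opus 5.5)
You reproduce the paper's proof step for step: the reduction via Theorem~\ref{thm:ricci_restriction} and Corollary~\ref{cor:nilpotency_step}, the unit vector $u \in [\mathfrak{n},\mathfrak{n}] \cap \mathfrak{z}_2 \cap \mathfrak{z}^\perp$, the vector $v = J_z u \neq 0$, the Jacobi argument giving $v \perp [\mathfrak{n},\mathfrak{n}]$ and hence $J_v = 0$, and the two evaluations of \eqref{eq:ricci_JX}. The gap is in the last step, and you created it yourself. You say that $\lambda \le -\tfrac12\|v\|^2$ ``goes in the wrong direction for a contradiction'' and abandon it. In fact it is exactly the inequality the paper uses. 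Your exact formula at $u$, $\lambda = \tfrac14\|J_u\|^2 - \tfrac12\|v\|^2$, gives the matching lower bound $\lambda \ge -\tfrac12\|v\|^2$, because $\|J_u\|^2 \ge 0$. Combining the two,
\[
\tfrac14\|J_u\|^2 - \tfrac12\|v\|^2 = \lambda \le -\tfrac12\|v\|^2 ,
\]
so $\|J_u\|^2 \le 0$ and $J_u = 0$. That means $u \perp [\mathfrak{n},\mathfrak{n}]$, which contradicts $0 \neq u \in [\mathfrak{n},\mathfrak{n}]$. You do not need an exact identity for $\|\operatorname{ad}_v\|^2$: the one-term lower bound $\|\operatorname{ad}_v\|^2 \ge \|[v,u]\|^2 = \|v\|^4$ is all the argument requires.

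As written, however, the proposal does not prove the theorem, because the conclusion is handed to the trace-balancing sketch. That sketch is never carried out (``should force''). It also rests on an unjustified claim. The Jacobi argument only shows $J_z(\mathfrak{z}_2) \perp [\mathfrak{n},\mathfrak{n}]$, not $\operatorname{Im} J_z \perp [\mathfrak{n},\mathfrak{n}]$. For $Y \notin \mathfrak{z}_2$ the quantity $\langle J_z Y, [A,B]\rangle = \langle z, [Y,[A,B]]\rangle$ has no structural reason to vanish. For instance, in the $4$-dimensional filiform algebra $[e_1,e_2]=e_3$, $[e_1,e_3]=e_4$ with orthonormal $e_i$, one has $J_{e_4}e_1 = e_3 \in [\mathfrak{n},\mathfrak{n}]$. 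Drop that detour and replace it with the combination above; the result is then precisely the paper's proof.
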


\begin{proof}
By Theorem \ref{thm:ricci_restriction}, for each negative eigenvalue $\lambda_j$ ($1 \le j \le r$), the ideal $(\mathfrak{i}_j, \langle \cdot, \cdot \rangle_j)$ is a nilpotent Riemannian Lie algebra with one-dimensional center $\mathfrak{z}(\mathfrak{i}_j) = \mathfrak{z}$ (hence with no abelian factor), satisfying $\operatorname{Ric}_{\mathfrak{i}_j} \in \operatorname{QC}(\mathfrak{i}_j)$ and having $\lambda_j < 0$ as its unique negative eigenvalue on $\mathfrak{z}^\perp \cap \mathfrak{i}_j = E_{\lambda_j}$. By Corollary \ref{cor:nilpotency_step}, to prove that $\mathfrak{n}$ is two-step nilpotent, it suffices to show that each ideal $\mathfrak{i}_j$ is two-step nilpotent. We may therefore assume without loss of generality that $(\mathfrak{n}, \langle \cdot, \cdot \rangle)$ has $\dim \mathfrak{z} = 1$ and that there exists $\lambda < 0$ such that
\begin{equation}\label{eq:ricci_lambda_zperp}
    \operatorname{Ric} X = \lambda X \quad \text{for all } X \in \mathfrak{z}^\perp.
\end{equation}

Notice first that for any orthonormal basis $\{e_1, \dots, e_n\}$ of $\mathfrak{n}$ and any $X \in \mathfrak{n}$, the Hilbert-Schmidt norms of the operators $\operatorname{ad}_X$ and $J_X$ are given by
\begin{align*}
    \|\operatorname{ad}_X\|^2 &= \sum_{p=1}^n \|[X, e_p]\|^2 = \sum_{p,q=1}^n \langle [X, e_p], e_q \rangle^2, \\
    \|J_X\|^2 &= \sum_{p,q=1}^n \langle J_X e_p, e_q \rangle^2 = \sum_{p,q=1}^n \langle X, [e_p, e_q] \rangle^2.
\end{align*}
Thus, the Ricci formula \eqref{eq:ricci_formula} can be written in coordinate-free form as
\begin{equation}\label{eq:ricci_JX}
    \langle \operatorname{Ric} X, X \rangle = \frac{1}{4} \|J_X\|^2 - \frac{1}{2} \|\operatorname{ad}_X\|^2.
\end{equation}

Suppose, for a contradiction, that the step of nilpotency of $\mathfrak{n}$ is $k \ge 3$, so that $\mathfrak{n}^{k-1} \neq \{0\}$ and $\mathfrak{n}^k = \{0\}$. Because $k \ge 3$, it follows that $\mathfrak{n}^{k-2} \subseteq \mathfrak{n}^1 = [\mathfrak{n}, \mathfrak{n}]$. Additionally, $[\mathfrak{n}^{k-2}, \mathfrak{n}] = \mathfrak{n}^{k-1} \subseteq \mathfrak{z}$, which means that $\mathfrak{n}^{k-2}$ is contained in $\mathfrak{z}_{2}$, the second center of $\mathfrak{n}$. Moreover, $\mathfrak{n}^{k-2} \not\subseteq \mathfrak{z}$ since $\mathfrak{n}^{k-1} \neq \{0\}$. By choosing a non-zero vector in $\mathfrak{n}^{k-2}$ orthogonal to $\mathfrak{z}$ and normalizing it, we obtain a unit vector $u \in [\mathfrak{n}, \mathfrak{n}] \cap \mathfrak{z}^\perp$ satisfying $[u, \mathfrak{n}] \subseteq \mathfrak{z}$. Since $u \in \mathfrak{z}^\perp$ and $\|u\| = 1$, equation \eqref{eq:ricci_lambda_zperp} yields $\operatorname{Ric} u = \lambda u$, and consequently $\langle \operatorname{Ric} u, u \rangle = \lambda$.

Let $z \in \mathfrak{z}$ be a unit vector spanning $\mathfrak{z}$, and define $v = J_z u$. For any $Y \in \mathfrak{n}$, since $[u, Y] \in \mathbb{R} z$, definition \eqref{eq:def_JX} yields
\begin{equation}\label{eq:bracket_u_Y}
    [u, Y] = \langle z, [u, Y] \rangle z = \langle J_z u, Y \rangle z = \langle v, Y \rangle z.
\end{equation}
Since $u \notin \mathfrak{z}$, the map $\operatorname{ad}_u$ is non-zero, so \eqref{eq:bracket_u_Y} implies $v \neq 0$. Furthermore, for all $Y, W \in \mathfrak{n}$, both $[u, Y]$ and $[u, W]$ lie in $\mathfrak{z}$, so the Jacobi identity gives
\begin{equation*}
    [u, [Y, W]] = [[u, Y], W] + [Y, [u, W]] = 0.
\end{equation*}
Hence $[u, [\mathfrak{n}, \mathfrak{n}]] = \{0\}$, and for every $W \in [\mathfrak{n}, \mathfrak{n}]$ we have $\langle v, W \rangle = \langle J_z u, W \rangle = \langle z, [u, W] \rangle = 0$. This shows that $v \in [\mathfrak{n}, \mathfrak{n}]^\perp \subseteq \mathfrak{z}^\perp$. In particular, $J_v = 0$, and by \eqref{eq:ricci_lambda_zperp} we also have $\operatorname{Ric} v = \lambda v$.

Now choose an orthonormal basis $\{e_1, \dots, e_n\}$ of $\mathfrak{n}$ with $e_1 = u$. Evaluating \eqref{eq:ricci_JX} at $X = v$, using that $J_v = 0$, and keeping only the term $p = 1$ ($e_1 = u$) in the expansion of $\|\operatorname{ad}_v\|^2$, we obtain
\begin{equation*}
    \lambda = \frac{\langle \operatorname{Ric} v, v \rangle}{\|v\|^2} = -\frac{1}{2\|v\|^2}\|\operatorname{ad}_v\|^2 = -\frac{1}{2\|v\|^2} \sum_{p=1}^n \|[v, e_p]\|^2 \le -\frac{1}{2} \frac{\|[v, u]\|^2}{\|v\|^2}.
\end{equation*}
By \eqref{eq:bracket_u_Y}, we have $[u, v] = \|v\|^2 z$, so $\|[v, u]\|^2 = \|v\|^4$, and the inequality above simplifies to
\begin{equation}\label{eq:bound_v}
    \lambda \le -\frac{1}{2} \|v\|^2.
\end{equation}
On the other hand, by \eqref{eq:bracket_u_Y} we have $\|[u, e_p]\|^2 = \langle v, e_p \rangle^2$ for each $1 \le p \le n$, so $\|\operatorname{ad}_u\|^2 = \sum_{p=1}^n \|[u, e_p]\|^2 = \|v\|^2$. Evaluating \eqref{eq:ricci_JX} at the unit vector $X = u$ we have
\begin{equation}\label{eq:ricci_u}
    \lambda = \langle \operatorname{Ric} u, u \rangle = \frac{1}{4} \|J_u\|^2 - \frac{1}{2} \|v\|^2.
\end{equation}

Combining \eqref{eq:ricci_u} and \eqref{eq:bound_v}, we obtain
\begin{equation*}
    \frac{1}{4} \|J_u\|^2 - \frac{1}{2} \|v\|^2 \le -\frac{1}{2} \|v\|^2,
\end{equation*}
which forces $\frac{1}{4} \|J_u\|^2 \le 0$, and hence $J_u = 0$. By \eqref{eq:def_JX}, $J_u = 0$ implies that $u \in [\mathfrak{n}, \mathfrak{n}]^\perp$, contradicting that $u$ is a unit vector in $[\mathfrak{n}, \mathfrak{n}]$.

Consequently, $\mathfrak{n}$ must be two-step nilpotent. Since $\dim \mathfrak{z} = 1$ and $\mathfrak{n}$ has no abelian factor, it follows that $\mathfrak{n}$ is isomorphic to an odd-dimensional Heisenberg Lie algebra.
\end{proof}

\begin{remark}\label{rem:heisenberg_gray_A}
It is straightforward to verify that any inner product on the $(2m+1)$-dimensional Heisenberg Lie algebra $\mathfrak{h}_{2m+1}$ defines a left-invariant metric belonging to Gray's class $\mathcal{A}$.
\end{remark}

As established earlier, every filiform Lie algebra has a one-dimensional center and no abelian factor. Consequently, Theorem \ref{thm:center_one_heisenberg} immediately rules out the existence of Riemannian metrics with $\operatorname{Ric} \in \operatorname{QC}(\mathfrak{n})$ on any non-Heisenberg filiform Lie algebra.

\begin{corollary}\label{cor:filiform}
Let $\mathfrak{n}$ be a filiform Lie algebra of dimension $n \ge 4$. Then there exists no inner product $\langle \cdot, \cdot \rangle$ on $\mathfrak{n}$ such that the associated Riemannian Lie algebra $(\mathfrak{n}, \langle \cdot, \cdot \rangle)$ satisfies $\operatorname{Ric} \in \operatorname{QC}(\mathfrak{n})$.
\end{corollary}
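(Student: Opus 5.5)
The plan is to argue by contradiction and reduce everything to Theorem \ref{thm:center_one_heisenberg}. Suppose $\mathfrak{n}$ is filiform of dimension $n \ge 4$ and that some inner product satisfies $\operatorname{Ric} \in \operatorname{QC}(\mathfrak{n})$.

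We first check the two hypotheses of Theorem \ref{thm:center_one_heisenberg}.
\begin{enumerate}
    \item[(a)] \emph{One-dimensional center.} Using the Vergne basis of Proposition \ref{prop:vergne_basis}, we have $\mathfrak{z} = \mathbb{R}e_n$, as already used in the proof of Corollary \ref{cor:qc_filiform}.
    \item[(b)] \emph{No abelian factor.} It suffices to observe that $\mathfrak{z} = \mathbb{R}e_n \subseteq \mathfrak{n}^{n-2} \subseteq [\mathfrak{n},\mathfrak{n}]$. If $\mathfrak{n} = \mathfrak{m} \oplus \mathbb{R}a$ as an orthogonal (or merely direct) sum of ideals with $\mathbb{R}a$ abelian, then $a$ would be central. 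Since $[\mathfrak{n},\mathfrak{n}] = [\mathfrak{m},\mathfrak{m}] \subseteq \mathfrak{m}$, the vector $a$ would not lie in $[\mathfrak{n},\mathfrak{n}]$, contradicting $\mathfrak{z} \subseteq [\mathfrak{n},\mathfrak{n}]$. Alternatively, a one-dimensional center that is not a direct factor already forces indecomposability.
\end{enumerate}

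With both hypotheses in place, Theorem \ref{thm:center_one_heisenberg} applies and gives that $\mathfrak{n}$ is two-step nilpotent. By definition, however, the step of nilpotency of a filiform algebra of dimension $n$ is $n-1 \ge 3$ when $n \ge 4$. This is the contradiction. Equivalently, a Heisenberg algebra $\mathfrak{h}_{2m+1}$ is filiform only when $2m+1 = 3$.

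The main, though mild, obstacle is justifying the "no abelian factor" hypothesis cleanly, since the paper asserts it as established. The containment $\mathfrak{z} \subseteq \mathfrak{n}^{n-2} = \operatorname{span}\{e_n\}$ from Proposition \ref{prop:vergne_basis}(ii) settles it directly.
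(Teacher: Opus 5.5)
Your proposal is correct and follows the paper's own argument: you verify that a filiform algebra has a one-dimensional center contained in $[\mathfrak{n},\mathfrak{n}]$ (so it has no abelian factor), apply Theorem \ref{thm:center_one_heisenberg} to force two-step nilpotency, and contradict the step $n-1 \ge 3$. Your explicit check that $\mathfrak{z} = \mathbb{R}e_n = \mathfrak{n}^{n-2} \subseteq [\mathfrak{n},\mathfrak{n}]$ via the Vergne basis spells out what the paper only asserts ``as established earlier.''
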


We conclude this section by showing that the hypothesis $\dim \mathfrak{z} = 1$ in Theorem \ref{thm:center_one_heisenberg} is sharp. Indeed, as soon as $\dim \mathfrak{z} = 2$, a nilpotent Riemannian Lie algebra can satisfy $\operatorname{Ric} \in \operatorname{QC}(\mathfrak{n})$ (and even have a unique negative eigenvalue on $\mathfrak{z}^\perp$) while having nilpotency step greater than two. We illustrate this phenomenon with two explicit examples of nilpotency steps three and five.

\begin{example}\label{ex:free_3step_dim5}
Let $\mathfrak{n}$ be the $5$-dimensional $3$-step free nilpotent Lie algebra on two generators. We endow it with the inner product for which $\{e_1, \dots, e_5\}$ is an orthonormal basis and the non-zero Lie brackets are given by
\begin{equation*}
    [e_1, e_2] = \frac{\sqrt{6}}{3}e_3, \qquad [e_1, e_3] = \frac{2\sqrt{3}}{3}e_4, \qquad [e_2, e_3] = \frac{2\sqrt{3}}{3}e_5.
\end{equation*}
Here $\mathfrak{z} = \operatorname{span}\{e_4, e_5\}$ has dimension two and $\mathfrak{z}^\perp = \operatorname{span}\{e_1, e_2, e_3\}$. The basis $\{e_1, \dots, e_5\}$ is \emph{nice} in the sense of Definition \ref{def:nice_basis} below. Consequently, the operators $\operatorname{ad}_{e_i}$ and $\operatorname{ad}_{e_j}$ are mutually orthogonal for $i \neq j$ with respect to the Hilbert-Schmidt inner product, as are $J_{e_i}$ and $J_{e_j}$. Thus, \eqref{eq:ricci_JX} implies that $\operatorname{Ric}$ is diagonal with respect to this basis. Evaluating \eqref{eq:ricci_JX} on each basis vector gives
\begin{align*}
    \langle \operatorname{Ric} e_1, e_1 \rangle &= \langle \operatorname{Ric} e_2, e_2 \rangle = -\frac{1}{2}\left(\frac{2}{3} + \frac{4}{3}\right) = -1, \\
    \langle \operatorname{Ric} e_3, e_3 \rangle &= \frac{1}{2}\left(\frac{2}{3}\right) - \frac{1}{2}\left(\frac{4}{3} + \frac{4}{3}\right) = -1, \\
    \langle \operatorname{Ric} e_4, e_4 \rangle &= \langle \operatorname{Ric} e_5, e_5 \rangle = \frac{1}{2}\left(\frac{4}{3}\right) = \frac{2}{3}.
\end{align*}
Hence $\operatorname{Ric} = \operatorname{diag}\bigl(-1, -1, -1, \frac{2}{3}, \frac{2}{3}\bigr)$, which satisfies $\operatorname{Ric}|_{\mathfrak{z}^\perp} = -\operatorname{Id}_{\mathfrak{z}^\perp}$. In particular, $\operatorname{Ric} \in \mathbb{R}\operatorname{Id}_{\mathfrak{n}} \oplus L(\mathfrak{n}, \mathfrak{z}) \subseteq \operatorname{QC}(\mathfrak{n})$.
This specific example corresponds to a particular $\mathcal{A}$-metric from the family described in \cite[Proposition 3.10]{ChenChenZhu2026}.
\end{example}

\begin{example}\label{ex:5step_dim7}
Consider the $7$-dimensional nilpotent Lie algebra denoted by $(23457C)$ in Gong's classification \cite{Gong1998}. We endow it with the inner product for which $\{e_1, \dots, e_7\}$ is an orthonormal basis and the non-zero Lie brackets are defined by
\begin{align*}
    [e_1, e_2] &= \frac{2\sqrt{11}}{11}\,e_3, & [e_1, e_3] &= \frac{\sqrt{10}}{5}\,e_4, & [e_1, e_4] &= \frac{2\sqrt{330}}{55}\,e_5, \\
    [e_1, e_5] &= \frac{2\sqrt{5}}{5}\,e_6, & [e_2, e_5] &= \frac{3\sqrt{22}}{11}\,e_7, & [e_3, e_4] &= -\frac{6\sqrt{165}}{55}\,e_7.
\end{align*}
The Lie algebra $\mathfrak{n}$ is a $5$-step nilpotent Lie algebra generated by $e_1$ and $e_2$ with two-dimensional center $\mathfrak{z} =  \operatorname{span}\{e_6, e_7\}$ and $\mathfrak{z}^\perp = \operatorname{span}\{e_1, \dots, e_5\}$.

As in Example \ref{ex:free_3step_dim5}, the basis $\{e_1, \dots, e_7\}$ is nice, thus $\operatorname{Ric}$ is diagonal in this basis. Evaluating \eqref{eq:ricci_JX} on the basis of $\mathfrak{z}^\perp$, we obtain
\begin{align*}
    \langle \operatorname{Ric} e_1, e_1 \rangle &= -\frac{1}{2}\left(\frac{4}{11} + \frac{2}{5} + \frac{24}{55} + \frac{4}{5}\right) = -1, \\
    \langle \operatorname{Ric} e_2, e_2 \rangle &= -\frac{1}{2}\left(\frac{4}{11} + \frac{18}{11}\right) = -1, \\
    \langle \operatorname{Ric} e_3, e_3 \rangle &= \frac{1}{2}\left(\frac{4}{11}\right) - \frac{1}{2}\left(\frac{2}{5} + \frac{108}{55}\right) = -1, \\
    \langle \operatorname{Ric} e_4, e_4 \rangle &= \frac{1}{2}\left(\frac{2}{5}\right) - \frac{1}{2}\left(\frac{24}{55} + \frac{108}{55}\right) = -1, \\
    \langle \operatorname{Ric} e_5, e_5 \rangle &= \frac{1}{2}\left(\frac{24}{55}\right) - \frac{1}{2}\left(\frac{4}{5} + \frac{18}{11}\right) = -1,
\end{align*}
while on the center $\mathfrak{z} = \operatorname{span}\{e_6, e_7\}$ we have
\begin{equation*}
    \langle \operatorname{Ric} e_6, e_6 \rangle = \frac{1}{2}\left(\frac{4}{5}\right) = \frac{2}{5} \quad \text{and} \quad \langle \operatorname{Ric} e_7, e_7 \rangle = \frac{1}{2}\left(\frac{18}{11} + \frac{108}{55}\right) = \frac{9}{5}.
\end{equation*}
Therefore,
\begin{equation*}
    \operatorname{Ric} = \operatorname{diag}\left(-1, -1, -1, -1, -1, \frac{2}{5}, \frac{9}{5}\right) \in \mathbb{R}\operatorname{Id}_{\mathfrak{n}} \oplus L(\mathfrak{n}, \mathfrak{z}) \subseteq \operatorname{QC}(\mathfrak{n}),
\end{equation*}
providing a $5$-step nilpotent example with two-dimensional center and $\operatorname{Ric}|_{\mathfrak{z}^\perp} = -\operatorname{Id}_{\mathfrak{z}^\perp}$.
\end{example}

\section{From Gray $\mathcal{A}$-metrics to the Quasicentroid Condition}

As established earlier, the condition $\operatorname{Ric} \in \operatorname{QC}(\mathfrak{n})$ is formally stronger than being a Gray $\mathcal{A}$-metric. In this section, we show that these two geometric conditions are in fact equivalent, provided we impose some natural structural or combinatorial assumptions on the algebra.

Let $(\mathfrak{n}, \langle \cdot, \cdot \rangle)$ be a Riemannian Lie algebra and let $\{e_1, \dots, e_n\}$ be an orthonormal basis which diagonalizes its Ricci operator; we have $\operatorname{Ric} e_i = \lambda_i e_i$ for $1 \le i \le n$.

If the metric is a Gray $\mathcal{A}$-metric, polarizing the cubic condition $\langle [\operatorname{Ric} X, X], X \rangle = 0$ yields the fundamental cyclic identity
\begin{equation}\label{eq:gray_cyclic}
    (\lambda_i - \lambda_j) \langle [e_i, e_j], e_k \rangle + (\lambda_j - \lambda_k) \langle [e_j, e_k], e_i \rangle + (\lambda_k - \lambda_i) \langle [e_k, e_i], e_j \rangle = 0,
\end{equation}
for all $1 \le i, j, k \le n$. This identity serves as the cornerstone for our subsequent results.

\subsection{Equivalence in the Two-Step Nilpotent Case}

A fundamental property of any two-step nilpotent Riemannian Lie algebra $(\mathfrak{n},\langle \cdot, \cdot\rangle)$ is that the Ricci operator automatically preserves the center of $\mathfrak{n}$, say $\mathfrak{z}$ (and consequently its orthogonal complement $\mathfrak{z}^\perp$); this follows directly from evaluating the Ricci operator formula, which trivially yields $\langle \operatorname{Ric} u, z \rangle = 0$ for any $u \in \mathfrak{z}^\perp$ and $z \in \mathfrak{z}$. This automatic block-diagonal structure of $\operatorname{Ric}$ was utilized in \cite[Theorem 2.2]{ChenChenZhangZhu2025} and \cite[Theorem 4]{AriasMarcoFB2026} to characterize Gray $\mathcal{A}$-metrics by the condition that the restriction of the Ricci operator to $\mathfrak{z}^\perp$ commutes with the endomorphisms $J_z|_{\mathfrak{z}^\perp}$ for all $z \in \mathfrak{z}$.

For completeness and to place this characterization within our framework, we reproduce the argument given in these works, adapting the notation to our setting. This reformulation makes explicit that the restricted commutativity condition they observed is, in fact, precisely the requirement that the Ricci operator belongs to the quasicentroid of the Lie algebra.

\begin{theorem}\label{thm:grayA_2step}
Let $(\mathfrak{n}, \langle \cdot, \cdot \rangle)$ be a two-step nilpotent Riemannian Lie algebra. Then the metric belongs to Gray's class $\mathcal{A}$ if and only if its Ricci operator satisfies $\operatorname{Ric} \in \operatorname{QC}(\mathfrak{n})$.
\end{theorem}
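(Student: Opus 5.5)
One implication needs no work: by \eqref{eq5}, $\operatorname{Ric} \in \operatorname{QC}(\mathfrak{n})$ gives $[\operatorname{Ric} X, X] = 0$, and therefore \eqref{grayA_condition} holds. The content lies in the converse. Assume \eqref{grayA_condition}, write $\mathfrak{n} = \mathfrak{v} \oplus \mathfrak{z}$ with $\mathfrak{v} = \mathfrak{z}^\perp$, and use the fact recalled above that $\operatorname{Ric}$ preserves both $\mathfrak{z}$ and $\mathfrak{v}$. Because the bracket vanishes on $\mathfrak{z}$, a $\mathfrak{z}$-component of $\operatorname{Ric}$ drops out of every bracket, so the quasicentroid condition only involves $\operatorname{Ric}|_{\mathfrak{v}}$. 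Concretely, $\operatorname{Ric} \in \operatorname{QC}(\mathfrak{n})$ is equivalent to
\[
[\operatorname{Ric} U, V] = [U, \operatorname{Ric} V] \quad \text{for all } U, V \in \mathfrak{v}.
\]
By the argument of Proposition \ref{lemma:qc_commutes_JX} restricted to $\mathfrak{v}$, this is in turn equivalent to $\operatorname{Ric}|_{\mathfrak{v}}$ commuting with $J_z|_{\mathfrak{v}}$ for every $z \in \mathfrak{z}$. The reason is that $\langle z, [\operatorname{Ric} U, V] - [U, \operatorname{Ric} V]\rangle = \langle (J_z \operatorname{Ric} - \operatorname{Ric} J_z) U, V\rangle$, and the brackets lie in $\mathfrak{z}$.

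To prove this commutation, I will polarize the cubic Gray condition along a mixed vector. Take $X = U + sZ$ with $U \in \mathfrak{v}$, $Z \in \mathfrak{z}$, $s \in \mathbb{R}$. Since $\operatorname{Ric} Z \in \mathfrak{z}$ is central, $[\operatorname{Ric} X, X] = [\operatorname{Ric} U, U]$, and this bracket lies in $\mathfrak{z}$. Then
\[
0 = \langle [\operatorname{Ric} X, X], X\rangle = \langle [\operatorname{Ric} U, U], U\rangle + s\,\langle [\operatorname{Ric} U, U], Z\rangle .
\]
The first term vanishes because $[\operatorname{Ric} U, U] \in \mathfrak{z} \perp U$. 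The coefficient of $s$ must also vanish, so $\langle [\operatorname{Ric} U, U], Z\rangle = 0$ for all $Z \in \mathfrak{z}$. As the bracket already lies in $\mathfrak{z}$, this forces $[\operatorname{Ric} U, U] = 0$ for every $U \in \mathfrak{v}$.

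Replacing $U$ by $U + V$ and using that the bracket is bilinear and $\operatorname{Ric}$ is linear gives $[\operatorname{Ric} U, V] + [\operatorname{Ric} V, U] = 0$, i.e. $[\operatorname{Ric} U, V] = [U, \operatorname{Ric} V]$ on $\mathfrak{v}$. Combined with the reduction in the first paragraph, this yields $\operatorname{Ric} \in \operatorname{QC}(\mathfrak{n})$ by \eqref{eq6}. The $J_z$-commutation formulation, which reproduces the criterion of \cite{ChenChenZhangZhu2025, AriasMarcoFB2026}, is an immediate restatement via the identity displayed above.

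The only point needing care is the block-diagonal structure of $\operatorname{Ric}$. Without it, $\operatorname{Ric} U$ could have a component along $\mathfrak{v}$ from $U$ alone that is fine, but $\operatorname{Ric} Z$ could have a $\mathfrak{v}$-component, and then the term $[\operatorname{Ric} Z, U]$ would not drop out of the expansion. I will verify this structure from \eqref{eq:ricci_formula} polarized. For $u \in \mathfrak{v}$ and $z \in \mathfrak{z}$, the term $\sum \langle [u, e_p], e_q\rangle\langle [z, e_p], e_q\rangle$ vanishes because $\operatorname{ad}_z = 0$. The term $\sum \langle [e_p, e_q], u\rangle\langle [e_p, e_q], z\rangle$ vanishes because in a two-step nilpotent algebra $[\mathfrak{n}, \mathfrak{n}] \subseteq \mathfrak{z} \perp u$. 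Hence $\langle \operatorname{Ric} u, z\rangle = 0$. Once this is in place, the rest is the short polarization argument above.
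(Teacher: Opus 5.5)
Your proof is correct, and it shares its skeleton with the paper's. Both arguments use the block-diagonal form of $\operatorname{Ric}$ with respect to $\mathfrak{z}^\perp \oplus \mathfrak{z}$. Both also observe that $[\operatorname{Ric} X, X] \in \mathfrak{z}$, so the Gray cubic only sees the pairing of this bracket with the central component of $X$.

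The difference is in the finish. The paper rewrites that pairing as $\langle J_z \operatorname{Ric} X, X \rangle$ and argues that its vanishing forces $J_z \operatorname{Ric}$ to be skew-symmetric, i.e.\ $[J_z, \operatorname{Ric}] = 0$. It then uses $J_u = 0$ for $u \in \mathfrak{z}^\perp$ (so $J_X = J_z$) together with Proposition~\ref{lemma:qc_commutes_JX} to conclude. It does this deliberately, so that the known $J_z$-commutation criterion from the literature is exhibited as exactly the quasicentroid condition.

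You instead let the central component vary independently through your parameter $s$. This gives $[\operatorname{Ric} U, U] = 0$ for every $U \in \mathfrak{z}^\perp$, hence \eqref{eq5} on all of $\mathfrak{n}$, without ever needing the $J$-operators or Proposition~\ref{lemma:qc_commutes_JX}.

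Your version is shorter and more elementary. It also makes explicit two points the paper leaves implicit. The first is the passage from ``$z$ equal to the central component of $X$'' to ``all $z \in \mathfrak{z}$'', which is exactly what your independent scaling justifies. The second is the actual verification, from the polarized Ricci formula, that $\operatorname{Ric}$ preserves $\mathfrak{z}$ and $\mathfrak{z}^\perp$.

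The paper's version, in exchange, ties the theorem directly to the restricted commutativity criterion of earlier works. Your closing remark about $J_z$ recovers that connection only as an afterthought.
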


\begin{proof}
Let $X \in \mathfrak{n}$ and let $\mathfrak{z}$ be the center of $\mathfrak{n}$. We decompose $X = u + z$ with $u \in \mathfrak{z}^\perp$ and $z \in \mathfrak{z}$. Since $\mathfrak{n}$ is two-step nilpotent, $[\mathfrak{n}, \mathfrak{n}] \subseteq \mathfrak{z}$. Hence, for any $Y, Z \in \mathfrak{n}$, $\langle J_u Y, Z \rangle = \langle u, [Y, Z] \rangle = 0$ because $[Y, Z] \in \mathfrak{z}$ and $u \in \mathfrak{z}^\perp$. This implies that $J_u = 0$ for all $u \in \mathfrak{z}^\perp$, and thus $J_X = J_u + J_z = J_z$.

As noted above, $\operatorname{Ric}$ preserves the orthogonal splitting $\mathfrak{n} = \mathfrak{z}^\perp \oplus \mathfrak{z}$. Now we consider the Gray $\mathcal{A}$-metric condition \eqref{grayA_condition}. Since $\mathfrak{n}$ is two-step nilpotent, $[\operatorname{Ric} X, X] \in \mathfrak{z}$. Taking the inner product with $X = u + z$ yields
\begin{equation*}
    \langle [\operatorname{Ric} X, X], X \rangle = \langle [\operatorname{Ric} X, X], u + z \rangle = \langle [\operatorname{Ric} X, X], z \rangle = \langle J_z(\operatorname{Ric} X), X \rangle.
\end{equation*}
Thus, the condition $\langle [\operatorname{Ric} X, X], X \rangle = 0$ for all $X \in \mathfrak{n}$ is equivalent to $\langle J_z \operatorname{Ric} X, X \rangle = 0$ for all $X \in \mathfrak{n}$ and $z \in \mathfrak{z}$.

Since $\operatorname{Ric}$ is self-adjoint and $J_z$ is skew-symmetric, the quadratic form $X \mapsto \langle J_z \operatorname{Ric} X, X \rangle$ vanishes identically if and only if $J_z \operatorname{Ric}$ is a skew-symmetric operator, which is equivalent to $[J_z, \operatorname{Ric}] = 0$.

Therefore, the $\mathcal{A}$-metric condition holds if and only if $[J_z, \operatorname{Ric}] = 0$ for all $z \in \mathfrak{z}$. Finally, since $J_u = 0$ for all $u \in \mathfrak{z}^\perp$, it follows that $[J_X, \operatorname{Ric}] = 0$ for all $X \in \mathfrak{n}$, which by Proposition \ref{lemma:qc_commutes_JX} is exactly the statement that $\operatorname{Ric} \in \operatorname{QC}(\mathfrak{n})$.
\end{proof}

\subsection{Nilpotent Lie algebras admitting a nice basis}

For nilpotency classes greater than two, condition \eqref{grayA_condition} still implies that $\operatorname{Ric} \in \operatorname{QC}(\mathfrak{n})$, provided that $(\mathfrak{n},\langle\cdot,\cdot\rangle)$ admits a \textit{nice basis}. Although the term \emph{nice basis} was formally coined by Yuri Nikolayevsky \cite[Definition 3]{Nikolayevsky2011} in his study of Einstein nilradicals, related ideas can be traced back, to the best of our knowledge, to the \textit{uniform bases} introduced by Edward Deloff \cite{Deloff1979} and subsequently studied by Thomas Wolter \cite{Wolter1991}.

\begin{definition}\label{def:nice_basis}
An orthonormal basis $\{e_1, \dots, e_n\}$ of a Riemannian nilpotent Lie algebra $(\mathfrak{n}, \langle \cdot, \cdot \rangle)$ is called a \emph{nice basis} if it satisfies the following two conditions:
\begin{enumerate}
    \item[(1)] For all $i, j$, there is at most one index $k$ such that $\langle [e_i, e_j], e_k \rangle \neq 0$.
    \item[(2)] For all $i, k$, there is at most one index $j$ such that $\langle [e_i, e_j], e_k \rangle \neq 0$.
\end{enumerate}
\end{definition}

Admitting a nice basis imposes a combinatorial rigidity on $(\mathfrak{n}, \langle \cdot, \cdot \rangle)$ much stronger than the mere Hilbert-Schmidt orthogonality of its structural operators, $\operatorname{ad}_{e_i}$ and $J_{e_i}$. Specifically, for any $i \neq j$, the matrix representations of the adjoint operators $\operatorname{ad}_{e_i}$ and $\operatorname{ad}_{e_j}$ in this basis have disjoint \emph{supports}; that is, they never share a non-zero entry in the same position. The exact same non-overlapping property holds for the endomorphisms $J_{e_i}$ and $J_{e_j}$. This structural disjointness trivially forces all their mutual Hilbert-Schmidt inner products to vanish, which in turn guarantees that the Ricci operator is diagonal in this basis.

\begin{theorem}\label{thm:grayA_nice_basis}
Let $(\mathfrak{n}, \langle \cdot, \cdot \rangle)$ be a nilpotent Riemannian Lie algebra admitting a nice basis. If the metric belongs to Gray's class $\mathcal{A}$, then its Ricci operator satisfies $\operatorname{Ric} \in \operatorname{QC}(\mathfrak{n})$.
\end{theorem}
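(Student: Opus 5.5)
Since the nice basis $\{e_1,\dots,e_n\}$ is orthonormal and, as noted just before the statement, the Hilbert--Schmidt orthogonality of the operators $\operatorname{ad}_{e_i}$ and of the operators $J_{e_i}$ forces $\operatorname{Ric}$ to be diagonal in it, we may write $\operatorname{Ric} e_i = \lambda_i e_i$ and use the cyclic identity \eqref{eq:gray_cyclic} in this basis. By Proposition \ref{prop:qc_diagonalizable_iff} (equivalence (i)$\Leftrightarrow$(ii)), it suffices to show that $[e_i,e_j]=0$ whenever $\lambda_i\neq\lambda_j$. Equivalently, for every triple $(i,j,k)$ with $c_{ij}^k := \langle [e_i,e_j],e_k\rangle \neq 0$ we must prove $\lambda_i=\lambda_j$.

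Fix $i,j,k$ with $c_{ij}^k\neq 0$. Identity \eqref{eq:gray_cyclic} involves three structure constants: $c_{ij}^k$, $c_{jk}^i$ and $c_{ki}^j$. I claim the nice basis conditions together with nilpotency force $c_{jk}^i = c_{ki}^j = 0$, so that \eqref{eq:gray_cyclic} collapses to $(\lambda_i-\lambda_j)c_{ij}^k=0$, which gives the result.

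To prove $c_{jk}^i=0$, I would use condition (2) of Definition \ref{def:nice_basis} applied to the pair $(j,i)$ (up to sign, $c_{ji}^k\neq0$). For fixed $j$ and output index $k$, the index $i$ is the unique one with $c_{j\,\cdot}^k\neq 0$. Condition (2) alone does not directly kill $c_{jk}^i$, which has output index $i$. The cleaner route is nilpotency. If both $c_{ij}^k\neq 0$ and $c_{jk}^i\neq 0$, then $\operatorname{ad}_{e_j}$ maps $e_i$ to a vector with nonzero $e_k$-component and $e_k$ to a vector with nonzero $e_i$-component. By condition (1), $[e_j,e_i]$ is a multiple of $e_k$ and $[e_j,e_k]$ is a multiple of $e_i$. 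Hence $\operatorname{ad}_{e_j}$ preserves $\operatorname{span}\{e_i,e_k\}$ and acts there by a matrix $\begin{pmatrix}0&a\\ b&0\end{pmatrix}$ with $ab\neq0$, which is not nilpotent. This contradicts Engel's theorem. The same argument with $\operatorname{ad}_{e_i}$ acting on $\operatorname{span}\{e_j,e_k\}$ shows $c_{ki}^j=0$. The degenerate cases in which two of the indices coincide are trivial: $c_{ii}^k=0$, and for instance $c_{ij}^i\neq0$ would make $e_i$ an eigenvector of $\operatorname{ad}_{e_j}$ with nonzero eigenvalue.

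The main obstacle is this step: checking carefully that condition (1) really makes the relevant $2$-dimensional span $\operatorname{ad}$-invariant, so that the nilpotency argument applies, including the sign conventions. Everything else follows from the already-established diagonality of $\operatorname{Ric}$ and from Proposition \ref{prop:qc_diagonalizable_iff}, which converts the vanishing of the cross brackets $[E_{\lambda},E_{\mu}]$ for $\lambda\neq\mu$ into $\operatorname{Ric}\in\operatorname{QC}(\mathfrak{n})$.
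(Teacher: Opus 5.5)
Your proposal is correct and follows the paper's own proof essentially step for step. You diagonalize $\operatorname{Ric}$ in the nice basis, use condition (1) together with nilpotency of $\operatorname{ad}_{e_j}$ (respectively $\operatorname{ad}_{e_i}$) to force $c_{jk}^i = c_{ki}^j = 0$, collapse the cyclic identity \eqref{eq:gray_cyclic} to $(\lambda_i-\lambda_j)c_{ij}^k=0$, and conclude via Proposition \ref{prop:qc_diagonalizable_iff}.

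One minor correction: nilpotency of each $\operatorname{ad}_X$ follows directly from the vanishing of the lower central series. Engel's theorem is the converse statement and is not needed here.
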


\begin{proof}
Let $\{e_1, \dots, e_n\}$ be a nice basis of $(\mathfrak{n}, \langle \cdot, \cdot \rangle)$, and so $\operatorname{Ric}$ is diagonal with respect to this basis and we have $\operatorname{Ric} e_i = \lambda_i e_i$. Therefore, the cyclic identity \eqref{eq:gray_cyclic} holds.

Suppose that $[e_i, e_j] \neq 0$. By condition (1) of Definition \ref{def:nice_basis}, there exists a unique $k$ such that $\langle [e_i, e_j], e_k \rangle \neq 0$. We claim that $\langle [e_j, e_k], e_i \rangle = 0$ and $\langle [e_k, e_i], e_j \rangle = 0$. If, for instance, $\langle [e_j, e_k], e_i \rangle \neq 0$, then $[e_j, e_k]$ is proportional to $e_i$. But this implies that $\operatorname{ad}_{e_j}^2(e_i)$ is a non-zero multiple of $e_i$, which contradicts the fact that the operator $\operatorname{ad}_{e_j}$ is nilpotent.

Therefore, the second and third terms in the polarized Gray identity given in \eqref{eq:gray_cyclic} vanish identically, reducing the equation to
\begin{equation*}
    (\lambda_i - \lambda_j) \langle [e_i, e_j], e_k \rangle = 0.
\end{equation*}
Since $\langle [e_i, e_j], e_k \rangle \neq 0$, we conclude that $\lambda_i = \lambda_j$. By bilinearity, this implies that $[E_{\lambda}, E_{\mu}] = \{0\}$ for any two distinct eigenspaces of $\operatorname{Ric}$, which by Proposition \ref{prop:qc_diagonalizable_iff} is equivalent to $\operatorname{Ric} \in \operatorname{QC}(\mathfrak{n})$.
\end{proof}

\subsection{A trace characterization}

For an arbitrary nilpotent Riemannian Lie algebra $(\mathfrak{n}, \langle \cdot, \cdot \rangle)$, we can establish a general trace characterization for metrics in the Gray class $\mathcal{A}$. By combining the polarized Gray identity given in \eqref{eq:gray_cyclic} with the vanishing of the Killing form, we obtain a necessary and sufficient condition for the Ricci operator to satisfy $\operatorname{Ric} \in \operatorname{QC}(\mathfrak{n})$.

\begin{theorem}\label{thm:grayA_trace}
Let $(\mathfrak{n}, \langle \cdot, \cdot \rangle)$ be a nilpotent Riemannian Lie algebra. Then $\operatorname{Ric} \in \operatorname{QC}(\mathfrak{n})$ if and only if the metric is a Gray $\mathcal{A}$-metric and satisfies
\begin{equation}\label{eq:trace_conditions}
    \operatorname{tr}(\operatorname{ad}_X \operatorname{ad}_Y \operatorname{Ric}) = 0 \quad \text{and} \quad \operatorname{tr}(\operatorname{ad}_X \operatorname{Ric} \operatorname{ad}_Y \operatorname{Ric}) = 0
\end{equation}
for all $X, Y \in \mathfrak{n}$.
\end{theorem}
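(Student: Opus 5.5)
The plan is to work throughout in an orthonormal basis $\{e_1,\dots,e_n\}$ that diagonalizes $\operatorname{Ric}$, with $\operatorname{Ric} e_i=\lambda_i e_i$, and to use the structure constants $c_{ijk}=\langle [e_i,e_j],e_k\rangle$. By Proposition \ref{prop:qc_diagonalizable_iff}, $\operatorname{Ric}\in\operatorname{QC}(\mathfrak{n})$ holds if and only if $(\lambda_i-\lambda_j)c_{ijk}=0$ for all $i,j,k$. So everything reduces to controlling the quantities
\begin{equation*}
a_{ijk}=(\lambda_i-\lambda_j)c_{ijk}.
\end{equation*}
In this notation the Gray condition is exactly the cyclic identity \eqref{eq:gray_cyclic}, namely $a_{ijk}+a_{jki}+a_{kij}=0$.

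\emph{Forward direction.} If $\operatorname{Ric}\in\operatorname{QC}(\mathfrak{n})$, the metric is Gray $\mathcal{A}$, as already observed after \eqref{eq5}. The quasicentroid identity can be rewritten as an operator identity: for all $Y,Z$,
\begin{equation*}
\operatorname{ad}_Y\operatorname{Ric} Z=[Y,\operatorname{Ric} Z]=[\operatorname{Ric} Y,Z]=\operatorname{ad}_{\operatorname{Ric} Y}Z,
\end{equation*}
so $\operatorname{ad}_Y\operatorname{Ric}=\operatorname{ad}_{\operatorname{Ric}Y}$. Substituting this, and using cyclicity of the trace for $\theta_2$, gives
\begin{equation*}
\theta_1(X,Y)=\operatorname{tr}(\operatorname{ad}_X\operatorname{ad}_{\operatorname{Ric}Y}),\qquad \theta_2(X,Y)=\operatorname{tr}(\operatorname{ad}_X\operatorname{ad}_{\operatorname{Ric}Y}\operatorname{Ric})=\operatorname{tr}(\operatorname{ad}_{\operatorname{Ric}X}\operatorname{ad}_{\operatorname{Ric}Y}).
\end{equation*}
Both are values of the Killing form, which vanishes because $\mathfrak{n}$ is nilpotent. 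Hence both conditions in \eqref{eq:trace_conditions} hold.

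\emph{Converse.} The aim is to prove $S:=\sum_{i,j,k}a_{ijk}^2=0$.

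First, pair the cyclic identity with $a_{ijk}$ and sum over all $i,j,k$. Using the invariance of the full sum under cyclic relabelling, this gives
\begin{equation*}
S=-2\sum_{i,j,k}a_{ijk}a_{jki}.
\end{equation*}

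Next, expand both $S$ and $\sum a_{ijk}a_{jki}$ into monomials $\lambda_p\lambda_q\,c\,c$. Each such monomial should be a full contraction of one of three forms:
\begin{itemize}
\item the Killing form $\operatorname{tr}(\operatorname{ad}_X\operatorname{ad}_Y)$ evaluated at $X,Y\in\{e_i,\operatorname{Ric}e_i,\operatorname{Ric}^2e_i\}$;
\item $\theta_1$ at such arguments;
\item $\theta_2$ at such arguments.
\end{itemize}
Some sample computations support this:
\begin{align*}
\theta_2(e_i,e_i)&=-\sum_{j,k}\lambda_j\lambda_k c_{ijk}^2,\\
\sum_i\lambda_i\,\theta_1(e_i,e_i)&=-\sum_{i,j,k}\lambda_i\lambda_k c_{ijk}^2,\\
\theta_1(e_i,\operatorname{Ric}e_i)&=\lambda_i\,\theta_1(e_i,e_i).
\end{align*}
The cross terms $\lambda_i\lambda_j c_{ijk}c_{jki}$ should similarly be recognised as $\operatorname{tr}(\operatorname{ad}_{e_k}\operatorname{Ric}\,\operatorname{ad}_{e_k}\operatorname{Ric})$-type or $\operatorname{tr}(\operatorname{ad}_{\operatorname{Ric}e_k}\operatorname{ad}_{e_k}\operatorname{Ric})$-type contractions, after using the skew-symmetry $c_{ijk}=-c_{jik}$. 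Since $\theta_1$, $\theta_2$ and the Killing form all vanish, every such term is zero. This forces $S=0$, hence $a_{ijk}=0$ for all $i,j,k$, which is exactly $\operatorname{Ric}\in\operatorname{QC}(\mathfrak{n})$.

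\emph{Main obstacle.} The delicate step is the bookkeeping: each monomial must be matched to one of the three vanishing bilinear forms evaluated at suitable vectors. The hardest terms are the "diagonal" squares $\lambda_i^2c_{ijk}^2$, where the weight sits twice on one slot, and the cross terms, where the weights sit on the bracket inputs $i,j$ while the identified contraction puts them on the output index. The first thing to try is to use the Gray identity only once, as above, to convert $S$ into the mixed sum $\sum a_{ijk}a_{jki}$. In that sum every weight is attached to distinct slots, as in $(\lambda_i-\lambda_j)(\lambda_j-\lambda_k)$. Each of the four resulting monomials should then be visibly a contraction of $\theta_1$ (one factor of $\operatorname{Ric}$), $\theta_2$ (two factors of $\operatorname{Ric}$ in separated positions), or the Killing form applied to $\operatorname{Ric}e_k$. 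If a stray $\lambda_j^2$ term survives, the fallback is to apply the cyclic identity a second time to redistribute it.
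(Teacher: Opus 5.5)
Your forward direction is correct, and so is your identity $S=-2\sum_{i,j,k}a_{ijk}a_{jki}$, which is exactly the paper's \eqref{eq:squared_cyclic_sum}. The gap is the step that carries the entire converse. You only assert that the mixed sum is a combination of the Killing form, $\theta_1$ and $\theta_2$, and the computations you offer as support are wrong.

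Since $\langle \operatorname{ad}_{e_i}e_j,e_k\rangle=c_{ijk}$, one has $\theta_2(e_i,e_i)=\sum_{j,k}\lambda_j\lambda_k\,c_{ijk}c_{ikj}$ and $\theta_1(e_i,e_i)=\sum_{j,k}\lambda_j\,c_{ijk}c_{ikj}$. Replacing $c_{ikj}$ by $-c_{ijk}$ presupposes that $\operatorname{ad}_{e_i}$ is skew-symmetric. That is false unless $e_i$ is central, because a skew-symmetric nilpotent operator vanishes. For instance, on $\mathfrak{h}_3$ with orthonormal $e_1,e_2,e_3$ and $[e_1,e_2]=e_3$, one has $\theta_2(e_1,e_1)=0$, whereas your formula gives $-\lambda_2\lambda_3=\tfrac14$.

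The same confusion affects the branch of your plan that expands $S$ itself into vanishing contractions. The monomials of $S$ are weighted Hilbert--Schmidt norms such as $\sum_i\lambda_i^2\|\operatorname{ad}_{e_i}\|^2$. These involve transposes of $\operatorname{ad}_{e_i}$ and cannot be written as traces of words in the $\operatorname{ad}$'s and $\operatorname{Ric}$. That branch must be dropped; the whole point of the Gray identity is to trade $S$ for the mixed sum.

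The missing idea is to contract the mixed sum over the index $j$, which is a bracket input in both factors. Indeed $c_{ijk}=-\langle\operatorname{ad}_{e_j}e_i,e_k\rangle$ and $c_{jki}=\langle\operatorname{ad}_{e_j}e_k,e_i\rangle$. So each product pairs an entry of $\operatorname{ad}_{e_j}$ with the entry in the transposed position, which yields a trace rather than a norm. Concretely, set $T_j=\operatorname{ad}_{e_j}(\lambda_j\operatorname{Id}_{\mathfrak{n}}-\operatorname{Ric})$. Then $\langle T_je_i,e_k\rangle=a_{ijk}$ and $\langle T_je_k,e_i\rangle=a_{jki}$, hence
\[
\sum_{i,k}a_{ijk}a_{jki}=\operatorname{tr}(T_j^2)=\lambda_j^2\operatorname{tr}(\operatorname{ad}_{e_j}^2)-2\lambda_j\,\theta_1(e_j,e_j)+\theta_2(e_j,e_j)=0 .
\]
This is exactly how the paper concludes, and no second application of the cyclic identity is needed. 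With this computation replacing your sample formulas, your argument becomes the paper's proof.
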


\begin{proof}
Suppose first that $\operatorname{Ric} \in \operatorname{QC}(\mathfrak{n})$. Then $[\operatorname{Ric} X, X] = 0$ for all $X \in \mathfrak{n}$, so the metric is a Gray $\mathcal{A}$-metric. Moreover, the quasicentroid condition $[\operatorname{Ric} Y, Z] = [Y, \operatorname{Ric} Z]$ for all $Y, Z \in \mathfrak{n}$ is equivalent to the operator identity $\operatorname{ad}_Y  \operatorname{Ric} = \operatorname{ad}_{\operatorname{Ric} Y}$. Since $\mathfrak{n}$ is nilpotent, its Killing form vanishes identically, that is, $\operatorname{tr}(\operatorname{ad}_U  \operatorname{ad}_V) = 0$ for all $U, V \in \mathfrak{n}$. Consequently, for any $X, Y \in \mathfrak{n}$,
\begin{align*}
    \operatorname{tr}(\operatorname{ad}_X  \operatorname{ad}_Y  \operatorname{Ric}) &= \operatorname{tr}(\operatorname{ad}_X  \operatorname{ad}_{\operatorname{Ric} Y}) = 0, \\
    \operatorname{tr}(\operatorname{ad}_X  \operatorname{Ric}  \operatorname{ad}_Y  \operatorname{Ric}) &= \operatorname{tr}(\operatorname{ad}_{\operatorname{Ric} X}  \operatorname{ad}_{\operatorname{Ric} Y}) = 0.
\end{align*}

Conversely, assume that the metric is a Gray $\mathcal{A}$-metric and that \eqref{eq:trace_conditions} holds. Let $\{e_1, \dots, e_n\}$ be an orthonormal eigenbasis for $\operatorname{Ric}$ with corresponding eigenvalues $\lambda_1, \dots, \lambda_n$, meaning $\operatorname{Ric} e_i = \lambda_i e_i$. Set the structure constants $C_{i,j,k} = \langle [e_i, e_j], e_k \rangle$, noting the skew-symmetry $C_{i,j,k} = -C_{j,i,k}$.

By the cyclic identity \eqref{eq:gray_cyclic}, we have
\begin{equation*}
    (\lambda_i - \lambda_j) C_{i,j,k} + (\lambda_j - \lambda_k) C_{j,k,i} = -(\lambda_k - \lambda_i) C_{k,i,j},
\end{equation*}
for all $i, j, k$. Squaring both sides and summing over all $1 \le i, j, k \le n$, we obtain
\begin{equation*}
    \sum_{i,j,k} (\lambda_i - \lambda_j)^2 C_{i,j,k}^2 + \sum_{i,j,k} (\lambda_j - \lambda_k)^2 C_{j,k,i}^2 + 2 \sum_{i,j,k} (\lambda_i - \lambda_j)(\lambda_j - \lambda_k) C_{i,j,k} C_{j,k,i} = \sum_{i,j,k} (\lambda_k - \lambda_i)^2 C_{k,i,j}^2.
\end{equation*}
By relabeling the summation indices, the second sum on the left-hand side cancels with the right-hand side, leaving
\begin{equation}\label{eq:squared_cyclic_sum}
    \sum_{i,j,k=1}^n (\lambda_i - \lambda_j)^2 C_{i,j,k}^2 + 2 \sum_{i,j,k=1}^n (\lambda_i - \lambda_j)(\lambda_j - \lambda_k) C_{i,j,k} C_{j,k,i} = 0.
\end{equation}

To express the second sum in \eqref{eq:squared_cyclic_sum} in terms of traces, fix $j \in \{1, \dots, n\}$ and consider the operator $T_j = \operatorname{ad}_{e_j} (\lambda_j \operatorname{Id}_{\mathfrak{n}} - \operatorname{Ric})$. For any basis vector $e_i$, we have
\begin{equation*}
    T_j(e_i) = (\lambda_j - \lambda_i)[e_j, e_i] = (\lambda_i - \lambda_j)[e_i, e_j] = \sum_{k=1}^n (\lambda_i - \lambda_j) C_{i,j,k} e_k.
\end{equation*}
Thus, the matrix entries of $T_j$ with respect to $\{e_1, \dots, e_n\}$ are $\langle T_j e_i, e_k \rangle = (\lambda_i - \lambda_j) C_{i,j,k}$ and $\langle T_j e_k, e_i \rangle = (\lambda_k - \lambda_j) C_{k,j,i} = (\lambda_j - \lambda_k) C_{j,k,i}$. Consequently,
\begin{equation*}
    \operatorname{tr}(T_j^2) = \sum_{i,k=1}^n \langle T_j e_i, e_k \rangle \langle T_j e_k, e_i \rangle = \sum_{i,k=1}^n (\lambda_i - \lambda_j)(\lambda_j - \lambda_k) C_{i,j,k} C_{j,k,i}.
\end{equation*}
On the other hand, expanding $T_j^2 = \operatorname{ad}_{e_j}  (\lambda_j \operatorname{Id}_{\mathfrak{n}} - \operatorname{Ric})  \operatorname{ad}_{e_j}  (\lambda_j \operatorname{Id}_{\mathfrak{n}} - \operatorname{Ric})$ and using the cyclic invariance of the trace ($\operatorname{tr}(A B) = \operatorname{tr}(B A)$), we find
\begin{equation*}
    \operatorname{tr}(T_j^2) = \lambda_j^2 \operatorname{tr}(\operatorname{ad}_{e_j}^2) - 2\lambda_j \operatorname{tr}(\operatorname{ad}_{e_j}^2  \operatorname{Ric}) + \operatorname{tr}(\operatorname{ad}_{e_j}  \operatorname{Ric} \operatorname{ad}_{e_j}  \operatorname{Ric}).
\end{equation*}
Since $\mathfrak{n}$ is nilpotent, $\operatorname{tr}(\operatorname{ad}_{e_j}^2) = 0$ and therefore, by \eqref{eq:trace_conditions}, $\operatorname{tr}(T_j^2) = 0$. Summing over $j = 1, \dots, n$, we conclude from \eqref{eq:squared_cyclic_sum} that $\sum_{i,j,k=1}^n (\lambda_i - \lambda_j)^2 C_{i,j,k}^2 = 0$. Hence, $(\lambda_i - \lambda_j) C_{i,j,k} = 0$ for all $i, j, k$.

This shows that if the basis elements $e_i$ and $e_j$ correspond to distinct eigenvalues ($\lambda_i \neq \lambda_j$), their Lie bracket must vanish ($[e_i, e_j] = 0$). By bilinearity, it follows that the Lie bracket between any two eigenspaces of the Ricci operator associated with distinct eigenvalues is trivial. Finally, by Proposition \ref{prop:qc_diagonalizable_iff}, this orthogonal commutativity of eigenspaces is precisely equivalent to $\operatorname{Ric} \in \operatorname{QC}(\mathfrak{n})$.
\end{proof}

\begin{remark}
The first trace condition in \eqref{eq:trace_conditions} is interesting in the following sense: it is a well-known fact that the Ricci operator of a nilpotent Riemannian Lie algebra $(\mathfrak{n}, \langle \cdot, \cdot \rangle)$ is orthogonal to the derivation algebra $\operatorname{Der}(\mathfrak{n})$ with respect to the Hilbert-Schmidt inner product. Since $\operatorname{Ric}$ is self-adjoint and $\operatorname{ad}_{[X,Y]} = [\operatorname{ad}_X, \operatorname{ad}_Y]$ is an inner derivation, this orthogonality means exactly that $\operatorname{tr}(\operatorname{ad}_{[X,Y]} \operatorname{Ric}) = 0$ for all $X, Y \in \mathfrak{n}$. This immediately implies that
\begin{equation*}
    \operatorname{tr}(\operatorname{ad}_X \operatorname{ad}_Y \operatorname{Ric}) = \operatorname{tr}(\operatorname{ad}_Y \operatorname{ad}_X \operatorname{Ric}).
\end{equation*}
Therefore, the function $\theta(X, Y) := \operatorname{tr}(\operatorname{ad}_X \operatorname{ad}_Y \operatorname{Ric})$ defines a symmetric bilinear form on $\mathfrak{n}$.
\end{remark}

\section*{Acknowledgements}
The author is profoundly indebted to Nadina Rojas, not only for the many stimulating discussions that helped shape these ideas, but above all for her constant and invaluable support.

Finally, the author acknowledges the use of artificial intelligence (Gemini) strictly to polish the mathematical English, improve phrasing, and ensure stylistic consistency across the manuscript. All mathematical arguments, logical deductions, and original proofs presented in this manuscript are the work of the author, with standard concepts and adapted results properly attributed.

\section*{Declarations}
\noindent \textbf{Data availability.} No data was used for the research described in this article.

\vspace{0.2cm}
\noindent \textbf{Conflict of interest.} The author declares no competing interests.

\end{document}